\tikzset{arrow/.style={-stealth}}
\tikzset{arrowshorter/.style={-stealth, shorten <=2pt, shorten >=2pt}}
\tikzset{arrowmuchshorter/.style={-stealth, shorten <=7pt, shorten >=6pt}}
\tikzset{mono/.style={>-stealth}} 
\tikzset{epi/.style={-twotriang}} 
\tikzset{twoarrowlonger/.style={double,double distance=1.5pt,
shorten <=5pt,shorten >=6pt,
decoration={markings,mark=at position -4pt with {\arrow[scale=1.75]{>}}},
preaction={decorate}}} 
\tikzset{twoarrow/.style={double,double distance=1.5pt,
shorten <=6pt,shorten >=7pt, 
decoration={markings,mark=at position -4pt
with {\arrow[scale=1.75]{>}}},
preaction={decorate} 
}
}
\tikzset{%
    symbol/.style={%
        draw=none,
        every to/.append style={%
            edge node={node [sloped, allow upside down, auto=false]{$#1$}}}
    }
}
\tikzset{mapstikz/.style={-stealth, 
decoration={markings,mark=at position 0pt with {\arrow[scale=0.5]{|}}}, preaction={decorate}}}
\theoremstyle{plain}   
\newtheorem{thm}{Theorem}[section] 
\let\c@thm\c@thm\makeatother
\newtheorem{cor}{Corollary}[section]
\let\c@cor\c@thm\makeatother
\newtheorem{lem}{Lemma}[section]
\let\c@lem\c@thm\makeatother
\newtheorem{prop}{Proposition}[section]
\let\c@prop\c@thm\makeatother
\let\c@claim\c@thm\makeatother
\newtheorem{conjecture}{Conjecture}[section]
\let\c@conjecture\c@thm\makeatother
\newtheorem*{unnumberedtheoremA}{Theorem A}
\newtheorem*{unnumberedtheoremB}{Theorem B}
\newtheorem*{unnumberedtheoremC}{Theorem C}
\theoremstyle{definition}
\newtheorem{defn}{Definition}[section]
\let\c@defn\c@thm\makeatother
\newtheorem{const}{Construction}[section]
\let\c@const\c@thm\makeatother
\newtheorem{notn}{Notation}[section]
\let\c@notn\c@thm\makeatother
\theoremstyle{remark}
\newtheorem{rmk}{Remark}[section]
\let\c@rmk\c@thm\makeatother
\let\c@ex\c@thm\makeatother
\let\c@observation\c@thm\makeatother
\let\c@warning\c@thm\makeatother
\newtheorem{digression}{Digression}[section]
\let\c@warning\c@thm\makeatother
\let\c@equation\c@thm
\numberwithin{equation}{section}
\newcommand{\newrefformat}[2]{}
\crefname{lem}{Lemma}{Lemmas}
\crefname{thm}{Theorem}{Theorems}
\crefname{defn}{Definition}{Definitions}
\crefname{notn}{Notation}{Notations}
\crefname{const}{Construction}{Constructions}
\crefname{prop}{Proposition}{Propositions}
\crefname{rmk}{Remark}{Remarks}
\crefname{cor}{Corollary}{Corollaries}
\crefname{equation}{Display}{Displays}
\crefname{ex}{Example}{Examples}
\newcommand{\cC}{\mathcal{C}}
\newcommand{\cO}{\mathcal{O}}
\newcommand{\cS}{\mathcal{S}}
\newcommand{\cat}{\cC\!\mathit{at}}
\newcommand{\set}{\cS\!\mathit{et}}
\newcommand{\sset}{\mathit{s}\set}
\newcommand{\psh}[1]{\set^{#1^{\op}}}
\newcommand{\pstrat}{\psh{t\Delta}}
 \newcommand{\twocat}{2\cat}
\DeclareMathOperator{\Hom}{Hom}
\DeclareMathOperator{\Map}{Map}
    \newcommand{\Nstreet}{N^{Street}}
      \newcommand{\hostreet}{c^{Street}}
        \newcommand{\Ntdelta}{N^{RS}}
        \newcommand{\hotdelta}{c^{RS}}
        \newcommand{\Nnat}{N^{\natural}}
    \newcommand{\honat}{c^{\natural}}
\DeclareMathOperator{\id}{id}
\DeclareMathOperator{\Ob}{Ob}
\newcommand{\aamalg}[1]{\underset{#1}{{\amalg}}} 
\DeclareMathOperator{\op}{op}
\newcommand{\eqDelta}{\Delta[3]_{eq}}
\newcommand{\po}{\ar@{}[dr]|{\text{\pigpenfont R}}}
\newcommand{\pb}{\ar@{}[dr]|{\text{\pigpenfont J}}}
\newcommand\simpf[9]{%
    \def\tempa{#1}%
    \def\tempb{#2}%
    \def\tempc{#3}%
    \def\tempd{#4}%
    \def\tempe{#5}%
    \def\tempf{#6}%
    \def\tempg{#7}%
    \def\temph{#8}%
    \def\tempi{#9}%
}
\newcommand\simpfcontinued[1]{%
    \begin{tikzcd}[column sep=1.3cm, row sep=0.4cm, baseline=(current  bounding  box.center), ampersand replacement=\&]
 3 \& 2 \arrow[l, "\tempc" swap] \&[-7mm] \&[-7mm] 3 \& 2 \arrow[l, "\tempc" swap]\\
 {} \& \&[-7mm] {=} \&[-7mm] \& \\
 0 \arrow[r, "\tempa" swap]\arrow[uu, "\tempd", ""{name=a031,inner sep=2pt, swap} ] \arrow[uur, "\tempe"{near start, xshift=0.15cm}, ""{name=a02,inner sep=2pt, swap}]\& 1 \arrow[uu, "\tempb" swap]\&[-7mm] \&[-7mm] 0 \arrow[uu, "\tempd", ""{name=a032,inner sep=2pt, swap}]\arrow[r, "\tempa" swap]\& 1\arrow[uu, "\tempb" swap] \arrow[uul, "\tempf"{near end, xshift=0.25cm}, ""{name=a13,inner sep=2pt, swap}]
 \arrow[Rightarrow, from=a031, to=1-2, shorten >= 0.3cm, "\tempg" ]
 \arrow[Rightarrow, from=a02, to=3-2, shorten >= 0.3cm, "\temph" swap ]
 \arrow[Rightarrow, from=a032, to=3-5, shorten >= 0.3cm, "\tempi" swap ]
 \arrow[Rightarrow, from=a13, to=1-5, shorten >= 0.3cm, "#1" ]
 \end{tikzcd}
}
\newcommand\simpfbdrya[9]{%
    \def\tempa{#1}%
    \def\tempb{#2}%
    \def\tempc{#3}%
    \def\tempd{#4}%
    \def\tempe{#5}%
    \def\tempf{#6}%
    \def\tempg{#7}%
    \def\temph{#8}%
    \def\tempi{#9}%
}
\newcommand\simpfbdryacontinued[1]{%
    \begin{tikzcd}[column sep=1.3cm, row sep=0.4cm, baseline=(current  bounding  box.center), ampersand replacement=\&]
 3 \& 2 \arrow[l, "\tempc" swap] \&[-7mm] \&[-7mm] 3 \& 2 \arrow[l, "\tempc" swap]\\
 {} \& \&[-7mm] {\mbox{ and }} \&[-7mm] \& \\
 0 \arrow[r, "\tempa" swap]\arrow[uu, "\tempd", ""{name=a031,inner sep=2pt, swap} ] \arrow[uur, "\tempe"{near start, xshift=0.15cm}, ""{name=a02,inner sep=2pt, swap}]\& 1 \arrow[uu, "\tempb" swap]\&[-7mm] \&[-7mm] 0 \arrow[uu, "\tempd", ""{name=a032,inner sep=2pt, swap}]\arrow[r, "\tempa" swap]\& 1\arrow[uu, "\tempb" swap] \arrow[uul, "\tempf"{near end, xshift=0.25cm}, ""{name=a13,inner sep=2pt, swap}]
 \arrow[Rightarrow, from=a031, to=1-2, shorten >= 0.3cm, "\tempg" ]
 \arrow[Rightarrow, from=a02, to=3-2, shorten >= 0.3cm, "\temph" swap ]
 \arrow[Rightarrow, from=a032, to=3-5, shorten >= 0.3cm, "\tempi" swap ]
 \arrow[Rightarrow, from=a13, to=1-5, shorten >= 0.3cm, "#1" ]
 \end{tikzcd}
}
\newcommand\simpftri[9]{%
    \def\tempa{#1}%
    \def\tempb{#2}%
    \def\tempc{#3}%
    \def\tempd{#4}%
    \def\tempe{#5}%
    \def\tempf{#6}%
    \def\tempg{#7}%
    \def\temph{#8}%
    \def\tempi{#9}%
}
\newcommand\simpftricontinued[1]{%
    \begin{tikzcd}[column sep=1.3cm, row sep=0.4cm, baseline=(current  bounding  box.center), ampersand replacement=\&]
 3 \& 2 \arrow[l, "\tempc" swap] \&[-7mm] \&[-7mm] 3 \& 2 \arrow[l, "\tempc" swap]\\
 {} \& \&[-7mm] {\Rrightarrow} \&[-7mm] \& \\
 0 \arrow[r, "\tempa" swap]\arrow[uu, "\tempd", ""{name=a031,inner sep=2pt, swap} ] \arrow[uur, "\tempe"{near start, xshift=0.15cm}, ""{name=a02,inner sep=2pt, swap}]\& 1 \arrow[uu, "\tempb" swap]\&[-7mm] \&[-7mm] 0 \arrow[uu, "\tempd", ""{name=a032,inner sep=2pt, swap}]\arrow[r, "\tempa" swap]\& 1\arrow[uu, "\tempb" swap] \arrow[uul, "\tempf"{near end, xshift=0.25cm}, ""{name=a13,inner sep=2pt, swap}]
 \arrow[Rightarrow, from=a031, to=1-2, shorten >= 0.3cm, "\tempg" ]
 \arrow[Rightarrow, from=a02, to=3-2, shorten >= 0.3cm, "\temph" swap ]
 \arrow[Rightarrow, from=a032, to=3-5, shorten >= 0.3cm, "\tempi" swap ]
 \arrow[Rightarrow, from=a13, to=1-5, shorten >= 0.3cm, "#1" ]
 \end{tikzcd}
}
\newcommand\simpfver[9]{%
    \def\tempa{#1}%
    \def\tempb{#2}%
    \def\tempc{#3}%
    \def\tempd{#4}%
    \def\tempe{#5}%
    \def\tempf{#6}%
    \def\tempg{#7}%
    \def\temph{#8}%
    \def\tempi{#9}%
}
\newcommand\simpfvercontinued[5]{%
    \begin{tikzcd}[column sep=1.3cm, row sep=0.4cm, baseline=(current  bounding  box.center), ampersand replacement=\&]
 #5 \& #4 \arrow[l, "\tempc" swap] \&[-7mm] \&[-7mm] #5 \& #4 \arrow[l, "\tempc" swap]\\
 {} \& \&[-7mm] {=} \&[-7mm] \& \\
 #2 \arrow[r, "\tempa" swap]\arrow[uu, "\tempd", ""{name=a031,inner sep=2pt, swap} ] \arrow[uur, "\tempe"{near start, xshift=0.15cm}, ""{name=a02,inner sep=2pt, swap}]\& #3 \arrow[uu, "\tempb" swap]\&[-7mm] \&[-7mm] #2 \arrow[uu, "\tempd", ""{name=a032,inner sep=2pt, swap}]\arrow[r, "\tempa" swap]\& #3\arrow[uu, "\tempb" swap] \arrow[uul, "\tempf"{near end, xshift=0.25cm}, ""{name=a13,inner sep=2pt, swap}]
 \arrow[Rightarrow, from=a031, to=1-2, shorten >= 0.3cm, "\tempg" ]
 \arrow[Rightarrow, from=a02, to=3-2, shorten >= 0.3cm, "\temph" swap ]
 \arrow[Rightarrow, from=a032, to=3-5, shorten >= 0.3cm, "\tempi" swap ]
 \arrow[Rightarrow, from=a13, to=1-5, shorten >= 0.3cm, "#1" ]
 \end{tikzcd}
}
\author{Viktoriya Ozornova}
\address{Fakult\"at f\"ur Mathematik, Ruhr-Universit\"at Bochum, D-44780 Bochum, Germany}
\email{viktoriya.ozornova@rub.de}
\author{Martina Rovelli}
\address{Department of Mathematics,
Johns Hopkins University,
Baltimore, MD 21218, USA
}
\email{mrovelli@math.jhu.edu}
\thanks{
The second-named author was partially funded by the Swiss National Science Foundation, grant P2ELP2\textunderscore172086. 
}
\begin{document}

\title{Nerves of $2$-categories and $2$-categorification of $(\infty,2)$-categories}

\maketitle

\begin{abstract}
We show that the homotopy theory of strict $2$-categories embeds in that of $(\infty,2)$-categories in the form of $2$-precomplicial sets.
More precisely, we construct a nerve-categorification adjunction that is a Quillen pair between Lack's model structure for $2$-categories and Riehl--Verity's model structure for $2$-complicial sets. Furthermore, we show that Lack's model structure is transferred along this nerve and that the nerve is homotopically fully faithful.
\end{abstract}

\tableofcontents

\section*{Introduction}
Higher categories turned out to be the right language to formalize many different mathematical phenomena. 
A category of higher order consists not only of objects and morphisms between objects, but also of morphisms in any dimension between morphisms of lower dimension.

The first incarnation of higher category theory was the study of strict $n$-categories, in which there are morphisms up to dimension $n$ and the composition of morphisms of any order is strictly associative and unital. An established way to formalize this idea is to consider categories enriched over the category of $(n-1)$-categories. There is a natural notion of equivalence of $n$-categories, given by $n$-functors that are homotopically essentially surjective on objects and induce equivalences on every hom-$(n-1)$-category, and there is interest in understanding the homotopy theory of $n$-categories up to equivalence. To this end, model structures for the category of $n$-categories have been established.

To accomodate many examples that appear in nature, more recently higher category theory has included also the study of various forms of weak higher categories, often referred to as $(\infty, n)$-categories.  
In an $(\infty,n)$-category there are higher morphisms at any level, and they are invertible in dimension higher than $n$. The composite of two morphisms is only defined only up to a higher morphism, associativity and unitality only hold up to a higher morphism, and invertibility of morphisms is expressed up to a higher morphism.
Unlike for strict $n$-categories, there is a lot of freedom in interpreting which mathematical object should implement the concept of an $(\infty,n)$-category. As of today, there are several models for $(\infty,n)$-categories, together with model structures that encode their homotopy theory.

For $n=0,1$, amongst many the most established models for $(\infty,n)$-categories there are Kan complexes and quasi-categories. These objects have both the nature of simplicial sets satisfying certain lifting properties. In this case, the $0$-simplices are thought of as objects, $m$-simplices correspond to $m$-morphisms, and the simplicial structure encodes identities and composition law of an $(\infty,0)$- or $(\infty,1)$-category.
For $n>1$, in this paper we use a model of $(\infty,n)$-categories given by $n$-(pre)complicial sets. These are simplicial sets having a certain lifting property in which some of the simplices that behave as equivalences are marked. To formalize this idea, model structures for $n$-(pre)complicial sets have been established.

Given the simplicial nature of these models, there is a standard way to regard a strict $n$-category as an $(\infty,n)$-category, by looking at its Street nerve.
The purpose of this paper is to show how this nerve can be used to meaningfully embed the homotopy theory of $2$-categories in that of $(\infty,2)$-categories, in the form of $2$-precomplicial sets.
While we go through the comparison of $n$-categories and $(\infty,n)$-categories with $n=0,1$ to then focus on $n=2$, we also indicate how the techniques employed are likely to generalize to the case $n>2$.

We construct a Quillen pair between the Riehl--Verity's model structure for $(\infty,2)$-categories and Lack's model structure for $2$-categories, and show that the former is right-transferred from the latter. This formalizes the idea that $(\infty,2)$-categories generalize $2$-categories, and that the homotopy theory of $2$-cate\-gories completely embeds in that of $(\infty,2)$-categories. Similar ideas working with $(\infty,2)$-categories in the form of $\Theta_2$-sets are pursued in \cite{campbell}. 

Given a $2$-category $\cC$, Street \cite{StreetOrientedSimplexes} and later Duskin \cite{duskin} defined the nerve $\Nstreet_2(\cC)$ to be a $3$-coskeletal simplicial set in which a $0$-simplex is an object $x$ of $\cC$, a $1$-simplex is a $1$-morphism $f\colon x\to y$ of $\cC$, a $2$-simplex is a $2$-morphism $\alpha\colon f\Rightarrow hg$ of $\cC$, and a $3$-simplex consists of four $2$-morphisms of $\cC$ for which the following composites agree.
\[
\simpfver{d}{c}{e}{a}{b}{f}{}{}{}\simpfvercontinued{}{x}{y}{z}{w}
\]
Unlike its $1$-dimensional analog, the Street--Duskin nerve is not fully faithful as a functor $\Nstreet_2\colon2\cat\to\psh{\Delta}$. This issue is a manifestation of the fact that composition of $2$-morphisms is no longer completely captured by the simplicial structure.

To correct the issue, Roberts and later Street endowed the Street--Duskin nerve with further structure, by marking the simplices of the nerve corresponding to identity morphisms in $\cC$.
The nature of the Roberts--Street nerve is that of a \emph{stratified simplicial set}. Stratified simplicial sets are certain presheaves over an enlargement $t\Delta$ of the usual simplex category $\Delta$.
Verity \cite{VerityComplicialAMS} shows that the Roberts--Street nerve is indeed fully faithful as a functor $\Ntdelta_2\colon2\cat\to\psh{t\Delta}$.

The corresponding adjunction
$$\hotdelta_2\colon\psh{t\Delta}\rightleftarrows 2\cat\colon\Ntdelta_2,$$
however, does not have good homotopical properties when endowing $2\cat$ with the Lack model structure \cite{lack1} and $\psh{t\Delta}$ with the Riehl--Verity model structure for $(\infty,2)$-categories \cite{or}.
For instance, not all nerves of $2$-categories are $(\infty,2)$-categories, and the adjunction is not a Quillen pair.

Rather than marking only the simplices coming from identities, we consider a different marking on the Street--Duskin nerve $\Nstreet_2(\cC)$. We mark all $1$-simplices corresponding to equivalences in $\cC$, all $2$-simplices corresponding to invertible $2$-morphisms, and all simplices in dimension higher than $2$. The nerve of $2$-catego\-ries endowed with this marking, which we call \emph{natural} nerve and denote $\Nnat_2(\cC)$, is a variant of a marking considered by Harpas--Nuiten--Prasma \cite{HNP}, Lurie \cite{lurieGoodwillie} and Riehl \cite{EmilyNotes}. The natural nerve fits into an adjoint pair
$$\honat_2\colon\psh{t\Delta}\rightleftarrows 2\cat\colon\Nnat_2$$
that has a better homotopical behaviour.
Indeed, not only is the natural nerve is a right Quillen functor, but it in fact creates the homotopy theory of $2$-catego\-ries, in the sense of the following theorem, which will appear as \cref{Lackistransferred}.

\begin{unnumberedtheoremA}
Lack's model structure on $2\cat$ is right-transferred from the Riehl--Verity model structure on $\psh{t\Delta}$ for $2$-complicial sets along the natural nerve $\Nnat_2\colon2\cat\to\psh{t\Delta}$.
\end{unnumberedtheoremA}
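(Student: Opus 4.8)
The plan is to verify the standard criterion for right-transferred model structures (Crans's transfer theorem) along the adjunction $\honat_2 \dashv \Nnat_2$. Recall that to transfer the Riehl--Verity model structure on $\psh{t\Delta}$ across the right adjoint $\Nnat_2$, I must show three things: (1) the would-be transferred structure on $2\cat$ — with weak equivalences and fibrations created by $\Nnat_2$ — coincides with Lack's model structure, which reduces to checking that a $2$-functor $F$ is a biequivalence (resp. Lack fibration) if and only if $\Nnat_2(F)$ is a weak equivalence (resp. fibration) of $2$-complicial sets; (2) $2\cat$ is complete and cocomplete, which is classical; and (3) the acyclic-cofibration/fibration factorization and lifting hypotheses hold, for which it suffices to exhibit a functorial path object on $2\cat$, or equivalently to check that $\honat_2$ sends the generating acyclic cofibrations of the Riehl--Verity structure to Lack acyclic cofibrations (the ``acyclicity'' condition). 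Since Lack's model structure is cofibrantly generated and all objects are fibrant in it, the path-object route will be the cleanest: a functorial path object can be built from the free-living-adjoint-equivalence $2$-category $E\!\mathit{q}$, giving $\cC \to \cC^{E\!\mathit{q}} \to \cC \times \cC$.

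First I would identify precisely what $\Nnat_2$ creates. The key input — which should be assembled from the properties of $\Nstreet_2$, the explicit description of the natural marking, and Verity's identification of the underlying simplicial set — is a characterization of the weak equivalences and fibrations of the Riehl--Verity model structure restricted to natural nerves. Concretely: a $2$-simplex of $\Nnat_2(\cC)$ is marked iff the corresponding $2$-morphism is invertible, a $1$-simplex is marked iff the $1$-morphism is an equivalence, so the ``marked'' data on $\Nnat_2(\cC)$ recovers exactly the equivalences and invertible cells of $\cC$; hence a map between natural nerves is a weak equivalence in the Riehl--Verity structure iff it induces the appropriate equivalence on homotopy $2$-categories and is suitably surjective, which unwinds to $F$ being a biequivalence. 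The matching statement for fibrations requires checking that the right lifting property against the Riehl--Verity generating acyclic cofibrations, pulled back along $\Nnat_2$, translates into the lifting conditions defining Lack fibrations (surjectivity on objects up to equivalence and the relevant lifting of $2$-cells and invertible $1$-cells). This is where I expect the genuine work to lie.

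Next I would verify the remaining transfer hypotheses. Completeness and cocompleteness of $2\cat$ is standard. For the acyclicity condition, I would use that Lack's structure is right proper and that every object is fibrant, so it suffices to produce the functorial path object $\cC^{E\!\mathit{q}}$ above and check that the two legs to $\cC \times \cC$ are a Lack acyclic fibration and that $\cC \to \cC^{E\!\mathit{q}}$ is a Lack weak equivalence — all of which are essentially formal once one knows $E\!\mathit{q} \to \ast \amalg \ast$ and $\ast \to E\!\mathit{q}$ have the right properties in $2\cat$. Combined with the adjunction, this yields that $\honat_2$ preserves acyclic cofibrations, completing the transfer. Finally, transferred model structures are unique given their weak equivalences and fibrations, so the resulting structure must be Lack's, which is the assertion.

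The main obstacle, as flagged, is step (1): showing that $\Nnat_2$ \emph{exactly} creates Lack fibrations and biequivalences — in particular the ``only if'' directions, where one must produce a genuine $2$-functorial lift (resp. extract a $2$-categorical biequivalence) from purely simplicial-complicial lifting data. This will hinge on a careful analysis of how the low-dimensional generating (acyclic) cofibrations of the Riehl--Verity model structure — especially the markers forcing equivalences and the ``complicial thinness'' extensions — interact with the $3$-coskeletality of the Street--Duskin nerve and with the natural marking, so that lifts against them are forced to come from composition and invertibility data already present in a $2$-category.
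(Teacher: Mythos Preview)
Your overall strategy is coherent but takes a harder route than the paper and contains a specific gap. The paper does \emph{not} attempt your step (1) --- directly characterizing the weak equivalences and fibrations created by $\Nnat_2$ as Lack's. Instead it proceeds in three stages: (i) show by explicit case-by-case computation that $\honat_2$ sends the generating cofibrations of $\psh{t\Delta}$ to Lack cofibrations and the elementary anodyne extensions to Lack acyclic cofibrations, so that $(\honat_2,\Nnat_2)$ is already a Quillen pair for Lack's model structure; (ii) deduce from (i), together with the fact that all $2$-categories are Lack-fibrant, that the acyclicity condition in the transfer theorem holds, hence the transferred model structure exists; (iii) identify the transferred structure with Lack's by comparing \emph{cofibrations} rather than weak equivalences or fibrations, invoking Joyal's lemma that a model structure is determined by its cofibrations and fibrant objects --- both structures have all objects fibrant, and each Lack generating cofibration is exhibited as a retract of $\honat_2$ applied to a boundary inclusion $\partial\Delta[m]\hookrightarrow\Delta[m]$.

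The gap in your plan is that you repeatedly appeal to ``the generating acyclic cofibrations of the Riehl--Verity structure'', both to characterize the created fibrations in step (1) and as an alternative to the path-object argument in step (3). But no explicit generating set of acyclic cofibrations for this model structure is known; the paper says so outright. What \emph{is} available is the set of elementary anodyne extensions, which detect fibrant objects and, via the Cisinski--Olschok machinery, fibrations between fibrant objects. Thus even your route would first require proving that every $\Nnat_2(\cC)$ is fibrant --- equivalently that $\honat_2$ sends each elementary anodyne extension to a Lack acyclic cofibration --- which is precisely the paper's main technical computation and which your plan does not mention. Once that computation is in hand, the paper's cofibration comparison via Joyal's lemma is much shorter than a direct analysis of created fibrations and weak equivalences, and your path-object argument becomes redundant since left-Quillenness already delivers the acyclicity condition.
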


Even though the natural nerve is not fully faithful on the nose, it is in a homotopical sense, as we will show in \cref{counitweakequivalence,corcofibrant}.

\begin{unnumberedtheoremB}
For any $2$-category $\cC$ the counit $\epsilon_{\cC}\colon\honat_2(\Nnat_2(\cC))\to\cC$ is a weak equivalence in the model structure for $(\infty,2)$-categories. As a consequence, the natural nerve is homotopically fully faithful and $\honat_2(\Nnat_2(\cC))$ is a cofibrant replacement for $\cC$ in the Lack model structure.
\end{unnumberedtheoremB}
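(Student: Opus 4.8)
The plan is to deduce the assertion about the counit from statements about hom-categories, leveraging the one-dimensional case. Since the weak equivalences of Lack's model structure on $2\cat$ are the biequivalences, and since $\epsilon_\cC$ is the identity on objects — the objects of $\honat_2(\Nnat_2(\cC))$ are the $0$-simplices of $\Nnat_2(\cC)$, i.e.\ the objects of $\cC$, and $\epsilon_\cC$ is the induced map — it suffices to prove that for all objects $x,y$ of $\cC$ the functor
\[
(\epsilon_\cC)_{x,y}\colon \honat_2(\Nnat_2(\cC))(x,y)\longrightarrow \cC(x,y)
\]
is essentially surjective, full, and faithful.

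Essential surjectivity and fullness I would read off the presentation of $\honat_2(\Nnat_2(\cC))$ by generators and relations, which by $3$-coskeletality of the nerve involves only simplices of dimension $\le 3$: a $1$-morphism $f\colon x\to y$ of $\cC$ is the image of the generator named by the $1$-simplex $f$; a $2$-morphism $\alpha\colon f\Rightarrow g$ of $\cC$ is the image of the generator named by the $2$-simplex with boundary $(\id_x, g, f)$ and filler $\alpha$; and a $2$-cell between arbitrary $1$-cells of the source is reduced to this case by the (invertible, since marked) $2$-cells exhibiting a formal composite as the actual composite in $\cC$.

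Faithfulness is the crux. Here the strategy is to pass through the hom construction on stratified simplicial sets: the nerve--categorification adjunction is compatible with a hom operation on $\psh{t\Delta}$ and with fundamental categories, and for a Street nerve $\operatorname{Hom}_{\Nnat_2(\cC)}(x,y)$ is the marked nerve of the hom-category $\cC(x,y)$, whose fundamental category is $\cC(x,y)$ itself by the on-the-nose one-dimensional fully faithfulness. Faithfulness of $(\epsilon_\cC)_{x,y}$ then becomes the statement that the comparison functor
\[
\honat_2(\Nnat_2(\cC))(x,y)\longrightarrow \tau_1\!\left(\operatorname{Hom}_{\Nnat_2(\cC)}(x,y)\right)\cong \cC(x,y)
\]
is an equivalence, i.e.\ that the data by which the source is a priori larger — the $1$-morphisms realized as composites running through auxiliary objects, the chosen pseudo-inverses of marked $1$-simplices, and the $2$-morphisms among all of these — collapses modulo the relations imposed by the $3$-simplices and the degeneracies. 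This is exactly where the generous marking is spent (all equivalences, all invertible $2$-cells, all higher simplices), and it amounts to a word problem: every equation of $2$-cells holding in $\cC$ must already be derivable in $\honat_2(\Nnat_2(\cC))$ from the $3$-simplices of the nerve. One could alternatively try to bootstrap from the Roberts--Street nerve $\Ntdelta_2$, which is fully faithful by Verity's theorem. Either way, this combinatorial control of the left adjoint $\honat_2$ is the step I expect to be the main obstacle; the rest is formal.

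The remaining assertions follow formally. The cofibrations of $\psh{t\Delta}$ are the monomorphisms, so $\Nnat_2(\cC)$ is cofibrant; since $\honat_2$ is left Quillen by \cref{Lackistransferred}, $\honat_2(\Nnat_2(\cC))$ is cofibrant in Lack's model structure, and as every $2$-category is fibrant there, $\epsilon_\cC$ already computes the derived counit of the adjunction at $\cC$. Hence $\epsilon_\cC$ being a weak equivalence says simultaneously that $\Nnat_2$ is homotopically fully faithful and that $\epsilon_\cC\colon \honat_2(\Nnat_2(\cC))\to\cC$ exhibits $\honat_2(\Nnat_2(\cC))$ as a cofibrant replacement of $\cC$.
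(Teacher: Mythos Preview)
Your reduction to showing that each $(\epsilon_\cC)_{x,y}$ is an equivalence of categories is exactly the paper's strategy (this is \cref{localequivalence}), and your treatment of essential surjectivity and fullness, as well as the formal deductions at the end, are fine. The gap is in faithfulness.

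The route you sketch for faithfulness does not go through as stated. You propose comparing $\honat_2(\Nnat_2(\cC))(x,y)$ with $\tau_1\bigl(\operatorname{Hom}_{\Nnat_2(\cC)}(x,y)\bigr)$, but there is no general compatibility of the left adjoint $\honat_2$ with a hom construction on $\psh{t\Delta}$: colimits defining $\honat_2$ do not interact well with the limit-like hom operation, and establishing such a compatibility in this instance is precisely the word problem you yourself flag as the obstacle. Saying it ``amounts to a word problem'' names the difficulty without resolving it; nothing in the proposal controls why two $2$-cells of $\honat_2(\Nnat_2(\cC))$ with the same image in $\cC$ must already be equal.

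The paper avoids the word problem entirely. Rather than proving faithfulness directly, it exhibits an explicit section
\[
F_{x,y}\colon \Map_{\cC}(x,y)\longrightarrow \Map_{\honat_2(\Nnat_2(\cC))}(x,y),
\]
sending $f\mapsto[f]$ and $\varphi\mapsto\varphi_{b,a,\id_x}$, checks that $\epsilon^{\natural}_{\cC}(x,y)\circ F_{x,y}=\id$, and then constructs by hand a natural isomorphism $\Psi\colon\id\Rightarrow F_{x,y}\circ\epsilon^{\natural}_{\cC}(x,y)$. The components $\Psi_{a_1\ldots a_r}$ are built inductively on word length using the canonical $2$-isomorphisms $I_{d_1,d_2}\colon[d_1d_2]\Rightarrow[d_1][d_2]$ and, for the formal adjoints $\widetilde{g}_{(f,g,\eta,\epsilon)}$, the composite $([g]*\widetilde\epsilon)\circ(\eta*\widetilde g)$; naturality is then checked on each type of generating $2$-morphism using specific $3$-simplices of $\Nnat_2(\cC)$ and the triangle identities. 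This is the missing idea: one does not show that relations in $\cC$ are derivable in $\honat_2(\Nnat_2(\cC))$, one instead writes down an inverse-up-to-isomorphism and verifies naturality generator by generator.
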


The following relation between the natural nerve and the Roberts-Street nerve is given by \cref{naturalnervefibrantreplacement}.

\begin{unnumberedtheoremC}
For any $2$-category $\cC$ the natural nerve $\Nnat_2(\cC)$ is a fibrant replacement of the Roberts--Street nerve $\Ntdelta_2(\cC)$.
\end{unnumberedtheoremC}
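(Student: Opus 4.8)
The plan is to exhibit the natural comparison map as a trivial cofibration into a fibrant object. The stratified simplicial sets $\Ntdelta_2(\cC)$ and $\Nnat_2(\cC)$ have the same underlying simplicial set, namely the Street--Duskin nerve $\Nstreet_2(\cC)$, and the Roberts--Street marking is by construction contained in the natural one. Hence the identity on underlying simplicial sets defines a map
\[
\iota_{\cC}\colon\Ntdelta_2(\cC)\longrightarrow\Nnat_2(\cC)
\]
in $\psh{t\Delta}$ that merely enlarges the marking; such a map is a monomorphism, hence a cofibration in the Riehl--Verity model structure \cite{or}. It therefore suffices to show that $\iota_{\cC}$ is a weak equivalence and that $\Nnat_2(\cC)$ is fibrant.

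Fibrancy of the target is the easy half: every $2$-category is fibrant in Lack's model structure and the natural nerve is a right Quillen functor (as recorded in Theorem~A above), so $\Nnat_2(\cC)$ is a $2$-trivial saturated weak complicial set; alternatively one verifies the complicial horn-filling, thinness-extension and saturation conditions directly on $\Nstreet_2(\cC)$ with its natural marking.

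For the weak equivalence I would use that the Riehl--Verity model structure for $(\infty,2)$-categories can be presented as the left Bousfield localization of the model structure for $2$-trivial weak complicial sets at the saturation maps, and that $\Ntdelta_2(\cC)$ is already fibrant in the latter by Verity's theorem \cite{VerityComplicialAMS}. The fibrant replacement of a $2$-trivial \emph{complicial} set along this localization is computed by its saturation, which does not alter the underlying simplicial set and only marks its complicial equivalences. Thus the statement reduces to an identification of markings: a $1$-simplex of $\Nstreet_2(\cC)$ is a complicial equivalence in $\Ntdelta_2(\cC)$ precisely when the corresponding $1$-morphism of $\cC$ is an equivalence, and a $2$-simplex corresponding to $\alpha\colon f\Rightarrow hg$ is one precisely when $\alpha$ is an invertible $2$-morphism of $\cC$; in dimensions $\geq 3$ there is nothing to check. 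The inclusion ``natural marking $\subseteq$ saturated marking'' I would prove by hand: from an adjoint equivalence, resp.\ an inverse $2$-cell, in $\cC$ one assembles the marked $2$- and $3$-simplices of $\Nstreet_2(\cC)$ witnessing the given simplex as a complicial equivalence. The reverse inclusion is then formal: $\Nnat_2(\cC)$ is itself a saturated complicial set whose marking contains the Roberts--Street marking, so it contains every complicial equivalence of $\Ntdelta_2(\cC)$, hence the whole saturated marking. Granting $\Nnat_2(\cC)=\Ntdelta_2(\cC)^{\mathrm{sat}}$, the map $\iota_{\cC}$ is exactly the saturation trivial cofibration, which completes the argument.

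The main obstacle is this matching of markings, i.e.\ identifying the purely complicial notion of ``thin simplex forced by saturation'' in the Street--Duskin nerve with the $2$-categorical notions of equivalence of $1$-morphisms and invertibility of $2$-morphisms in $\cC$. Concretely this requires building, from equivalence data in $\cC$, explicit thin $3$-simplices of $\Nstreet_2(\cC)$ and checking the requisite compatibilities, and conversely ruling out that saturation produces any unexpected thin simplices; I would isolate this as a separate lemma on complicial equivalences in the Street--Duskin nerve, established by a low-dimensional hands-on analysis and invoked here as a black box.
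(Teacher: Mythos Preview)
Your outline has the right shape---show the comparison is a trivial cofibration into a fibrant object---and the fibrancy half is handled exactly as the paper does (right Quillen functor, every $2$-category fibrant). But the weak-equivalence half has two genuine gaps.

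First, a factual point: $\Nnat_2(\cC)$ is \emph{not} a stratified simplicial set. In the paper's setup the value at $[1]_t$ is $\Hom_{2\cat}(\mathbb E,\cC)$, so a $1$-equivalence $f$ is marked once for every completion of $f$ to an adjoint equivalence $(f,g,\eta,\epsilon)$. Your whole discussion of ``matching of markings'' and of $\iota_{\cC}$ ``merely enlarging the marking'' treats $\Nnat_2(\cC)$ as stratified; the identification $\Nnat_2(\cC)=\Ntdelta_2(\cC)^{\mathrm{sat}}$ cannot be literally true, and any argument passing through stratified saturation must at some point collapse the multiple markings and explain why this is harmless. The paper handles exactly this by taking retracts (its $P_1\to P_2$ and $P_4\to\Nnat_2(\cC)$ steps), which is not a detail one can suppress.

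Second, and more seriously, the sentence ``the fibrant replacement of a $2$-trivial complicial set along this localization is computed by its saturation, which does not alter the underlying simplicial set'' is doing all the work and is not something you can invoke. In general, once you enlarge the marking by pushing out along saturation extensions, the result need not remain a weak complicial set: new admissible horns appear and must be filled, potentially changing the underlying simplicial set. That this does \emph{not} happen for $\Ntdelta_2(\cC)$---that one reaches a fibrant object by marking alone---is precisely the content of the theorem. Your ``reverse inclusion is formal'' step likewise presupposes that there is a well-defined minimal saturated complicial marking, which is not established. So the argument is circular: the black-box lemma you defer to is essentially the whole proof.

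By contrast, the paper proves the map is an anodyne extension directly: it writes $\Ntdelta_2(\cC)\hookrightarrow\Nnat_2(\cC)$ as a finite composite of pushouts of saturation and thinness anodyne extensions (and retracts thereof), first marking $2$-simplices witnessed by invertible $2$-cells via explicit $4$- and $3$-simplices of $\Nstreet_2(\cC)$, then marking $1$-simplices witnessed by equivalences via explicit $3$-simplices built from adjoint-equivalence data. This is exactly the ``low-dimensional hands-on analysis'' you postpone, but it is the proof, not a lemma feeding into a cleaner argument.
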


\addtocontents{toc}{\protect\setcounter{tocdepth}{1}}
\subsection*{Acknowledgements}
We would like to thank Andrea Gagna, Lennart Meier, Francois M\'etayer and Emily Riehl for helpful conversations.

\section{Model structures for strict higher categories}
We recall a family of model structures on the categories $n\cat$ of strict higher categories for $n\ge0$. Although we describe several relevant constructions for general $n$, the reader is encouraged to focus on $n=2$, for which most contributions of this paper are. Although less surprising, the cases $n=0,1$ fit into the picture and are useful to familiarize the reader 
with the notions.

    The idea behind a strict higher category is that it consists of $m$-morphisms for positive $m$
    \begin{itemize}[leftmargin=*]
        \item whose source and target are two ($m-1$)-morphisms;
        \item that can be composed along $(m-1)$-morphisms;
        \item that have an identity $(m+1)$-morphism.
    \end{itemize}
    Composition is then required to be strictly unital and associative.
    
    If there are $m$-morphisms existing in every dimension $m\ge0$, this is formalized by the established notion of an \emph{$\omega$-category} \cite[\textsection1]{StreetOrientedSimplexes}. When there are only $m$-morphisms up to dimension $m\le n$, we recover the more familiar idea of a strict \emph{$n$-category}: assuming that a \emph{$0$-category} is just a set, an $n$-category is inductively described as a category enriched over the category of $(n-1)$-categories. All together, $\omega$-categories form a category $\omega\cat$ in which the morphisms are $\omega$-functors, and it contains as a full subcategory the category $n\cat$ of strict $n$-categories and strict $n$-functors for $n\ge0$.
    
    \begin{rmk}
\label{truncation}

The fully faithful 
inclusions
$i\colon n\cat\hookrightarrow(n+1)\cat\hookrightarrow\omega\cat$
fit into adjoint pairs
\[
\begin{tikzcd}[column sep=2cm]
 n\cat \arrow[r, ""{name=x1, above}, ""{name=x2, below, inner sep=1pt}, hook]& (n+1)\cat \arrow[l, bend left=25, "\lvert -\rvert_n"{name=x3, below, pos=0.56}] \arrow[l, bend right=25, "(-)_{n}"{name=x4, above}, ""{name=x5, below, inner sep=1pt, pos=0.56}]
 \arrow[from=x1, to=x3, symbol=\dashv]
 \arrow[from=x5, to=x2, symbol=\dashv]
\end{tikzcd}
\text{ and }
\begin{tikzcd}[column sep=2cm]
 n\cat \arrow[r, ""{name=x1, above}, ""{name=x2, below, inner sep=1pt}, hook]& \omega\cat \arrow[l, bend left=25, "\lvert -\rvert_n"{name=x3, below, pos=0.5}] \arrow[l, bend right=25, "(-)_{n}"{name=x4, above}, ""{name=x5, below, inner sep=1pt, pos=0.5}]
 \arrow[from=x1, to=x3, symbol=\dashv]
 \arrow[from=x5, to=x2, symbol=\dashv]
\end{tikzcd}
\]
where the left adjoint $(-)_n$ is the \emph{$n$-th truncation functor} and the right adjoint $|-|_n$ is the \emph{underlying $n$-category functor}.
Given an $\omega$-category or $(n+1)$-category $\cC$, the underlying $n$-category $|\cC|_n$ is obtained by forgetting all morphisms in dimension strictly higher than $n$ and the $n$-truncation $\cC_n$ of  can be understood as the $n$-category obtaining by forcing all morphisms in dimension strictly higher than $n$ to be identities. We will discuss an instance of truncation needed later more in details.
\end{rmk}

Lafont--M\'etayer--Worytkiewicz \cite{LMW} and Futia \cite{futia} identify a notion of $\omega$-weak equivalence 
that specializes to the meaningful notion of $n$-categorical equivalence. In particular, the notion recovers bijections of sets, equivalences of ordinary categories and biequivalences of $2$-categories when $n=0,1,2$. In order to model the homotopy theory of $\omega$-categories and of $n$-categories, Lafont--M\'etayer--Worytkiewicz in \cite{LMW} construct a model structure on $\omega\cat$ whose weak equivalences are the $\omega$-weak equivalences, and then right-transfer it along the inclusion functor to the category $n\cat$ of strict $n$-categories, obtaining a model structure with the correct weak equivalences.

 \begin{thm}[{\cite[Theorem 5]{LMW}}]
\label{modelstructureondiscretepresheaves}
The category $n\cat$ supports a cofibrantly generated model structure in which
\begin{itemize}[leftmargin=*]
\item all $n$-categories are fibrant;
\item the weak equivalences are precisely the $n$-categorical equivalences.
\end{itemize}
\end{thm}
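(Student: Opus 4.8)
This is Theorem 5 of Lafont--M\'etayer--Worytkiewicz, and the plan is to obtain the model structure on $n\cat$ by right transfer of the folk model structure on $\omega\cat$ along the adjunction
\[
(-)_n\colon\omega\cat\rightleftarrows n\cat\colon\incl
\]
of \cref{truncation}, with $\incl$ the right adjoint. By the very definition of a transferred structure, its weak equivalences and fibrations will be the maps of $n$-categories whose images under $\incl$ are $\omega$-weak equivalences, resp.\ $\omega$-fibrations, and its generating (trivial) cofibrations will be the images under $(-)_n$ of those of $\omega\cat$; so once existence is established the stated description will follow from the identifications in the last paragraph.

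The input I would invoke is the folk model structure on $\omega\cat$: it is cofibrantly generated, its weak equivalences are the $\omega$-weak equivalences, its generating cofibrations $I$ are the globe-boundary inclusions $\partial\mathbf{O}_m\hookrightarrow\mathbf{O}_m$ (with $\emptyset\hookrightarrow\mathbf{O}_0$), its generating trivial cofibrations $J$ form an explicit set of inclusions of a point into a contractible $\omega$-category, and every $\omega$-category is fibrant. Since $n\cat$ is locally presentable the small-object argument applies to $(-)_n(I)$ and $(-)_n(J)$, so by Kan's transfer criterion it remains only to check the \emph{acyclicity condition}: every relative $(-)_n(J)$-cell complex is an $\omega$-weak equivalence after applying $\incl$.

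I would verify the acyclicity condition by Quillen's path-object argument, which is available because every object of $\omega\cat$ is fibrant, so that every $n$-category $\cC$ --- having $\incl(\cC)\to\ast$ an $\omega$-fibration --- is a fibrant object of $n\cat$. It suffices to produce a functorial factorization of the diagonal $\cC\to\cC\times\cC$ in $n\cat$ into a map sent by $\incl$ to an $\omega$-weak equivalence followed by a map sent by $\incl$ to an $\omega$-fibration; both conditions are tested in $\omega\cat$, where the model structure is already in hand, so no circularity arises. The natural candidate is the exponential $\cC^{\mathbf{J}}$ by the $n$-categorical interval $\mathbf{J}$ (the free $n$-category on an equivalence), formed via the Cartesian closed structure of $n\cat$, together with the inclusion of constants $\cC\to\cC^{\mathbf{J}}$ and the endpoint evaluation $\cC^{\mathbf{J}}\to\cC\times\cC$; the work is to confirm, by comparison with the folk path object in $\omega\cat$, that $\incl$ turns the first into an $\omega$-weak equivalence and the second into an $\omega$-fibration.

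Finally I would identify the transferred structure with the claimed one: a functor of $n$-categories is by construction a weak equivalence iff it is an $\omega$-weak equivalence, and by the characterization of $\omega$-weak equivalences between $n$-categories of Lafont--M\'etayer and Futia this holds iff it is an $n$-categorical equivalence (recovering bijections, equivalences of categories and biequivalences for $n=0,1,2$); and an $n$-category is fibrant iff it is fibrant in $\omega\cat$, hence always. The main obstacle is the acyclicity condition --- i.e.\ showing that the pushouts and transfinite composites of maps in $(-)_n(J)$, which freely adjoin invertibility data inside an $n$-category, remain $n$-equivalences, equivalently that $n$-truncation preserves the trivial cofibrations of $\omega\cat$; the path-object argument trades the delicate combinatorics of cellular attachments in $\omega\cat$ for the comparison of path objects under $(-)_n$ and $\incl$, which is where most of the effort lies. (One could instead argue inductively, using that $n\cat$ is the category of categories enriched in $(n-1)\cat$ and invoking the general existence theorem for model structures on enriched categories, bypassing $\omega\cat$ altogether.)
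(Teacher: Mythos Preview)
The paper does not give its own proof of this theorem: it is stated as a citation to \cite[Theorem~5]{LMW}, with the surrounding text merely recording that Lafont--M\'etayer--Worytkiewicz ``construct a model structure on $\omega\cat$ whose weak equivalences are the $\omega$-weak equivalences, and then right-transfer it along the inclusion functor to the category $n\cat$.'' Your outline therefore cannot be compared against a proof in this paper, but it does match the strategy the paper attributes to \cite{LMW}, and it is the correct high-level plan.

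One point of caution: your path-object argument hinges on the claim that $n\cat$ is Cartesian closed and that the $n$-categorical cylinder/interval yields a path object whose structure maps, after applying $\incl$, are an $\omega$-weak equivalence and an $\omega$-fibration. This is where the real content lies, and your sketch does not indicate how you would verify it; in \cite{LMW} the acyclicity condition is handled rather differently (they work directly with the explicit generating trivial cofibrations $j_m\colon\mathbf O^m\to\mathbf P^m$ and analyze their truncations), so be aware that the path-object route, while plausible, is not the argument actually given in the cited source. Your parenthetical alternative via enriched model structures is also viable and is closer in spirit to how the low-dimensional cases (Lack for $n=2$) were originally established.
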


When $n=0$ the model structure above specializes to the following well-known model structure on the category $\set$ of sets\footnote{For an account of model structures on $\set$, we recommend looking at \cite{CamarenaSets}.}.

\begin{prop}
\label{modelstructureSet}
The category $\set$ supports a cofibrantly generated model structure in which
\begin{itemize}[leftmargin=*]
\item the weak equivalences are the bijections;
 \item the fibrations are all functions;
\item the cofibrations are all functions.
\end{itemize}
Moreover,
\begin{itemize}[leftmargin=*]
 \item the unique generating acyclic cofibration is
 $$\id_{\{0\}}\colon\{0\}\to\{0\}$$
 \item the generating cofibration are
 $$\varnothing\hookrightarrow\{0\}\quad\text{ and }\quad\{0\}\amalg\{0\}\to\{0\}.$$
\end{itemize}
\end{prop}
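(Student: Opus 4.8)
The plan is to verify the model category axioms directly and then to identify the two generating sets; everything is essentially formal, because the class of acyclic cofibrations and the class of acyclic fibrations will each turn out to coincide with the bijections, and a bijection of sets has the left and the right lifting property against \emph{every} map of $\set$. Concretely, if $g\colon Y\to Z$ is a bijection, then in any commutative square with $g$ on the right and an arbitrary map on the left the diagonal filler is the composite of $g^{-1}$ with the bottom edge; symmetrically, a bijection on the left admits a filler against an arbitrary map on the right.

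With this observation recorded the verification is short. The category $\set$ is complete and cocomplete. The weak equivalences, being the bijections (this is also the $n=0$ case of \cref{modelstructureondiscretepresheaves}, since a $0$-categorical equivalence is a bijection of sets), satisfy $2$-out-of-$3$ and are closed under retracts; the (co)fibrations are all maps, so the corresponding axioms are trivial for them. Since the acyclic cofibrations and the acyclic fibrations are exactly the bijections, the lifting axiom is an immediate consequence of the observation above. For the factorization axiom one writes a map $f\colon X\to Y$ as $X\xrightarrow{\id}X\xrightarrow{f}Y$, a bijection followed by a fibration, and as $X\xrightarrow{f}Y\xrightarrow{\id}Y$, a cofibration followed by a bijection; these factorizations are visibly functorial. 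Hence the three classes form a model structure.

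It remains to see that this model structure is cofibrantly generated by the stated sets. A map of sets has the right lifting property against $\varnothing\hookrightarrow\{0\}$ if and only if it is surjective, and against the fold map $\{0\}\amalg\{0\}\to\{0\}$ if and only if it is injective; so a map has the right lifting property against both generating cofibrations precisely when it is a bijection, i.e.\ an acyclic fibration. Every map has the right lifting property against $\id_{\{0\}}$, so the maps with that right lifting property are all maps, i.e.\ the fibrations. Dually, invoking the lifting observation, the maps with the left lifting property against all bijections are all maps, i.e.\ the cofibrations, while the maps with the left lifting property against all maps are exactly the isomorphisms, i.e.\ the bijections, which are the acyclic cofibrations. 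Finally, the domains $\varnothing$, $\{0\}$ and $\{0\}\amalg\{0\}$ are finite, hence small, so the recognition criterion for cofibrantly generated model structures is satisfied.

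I do not expect a genuine obstacle here: the statement is elementary, and the only point that requires a moment of care is the bookkeeping identifying the two specified generating cofibrations with, respectively, the surjections and the injections, and hence their joint right-lifting class with the acyclic fibrations.
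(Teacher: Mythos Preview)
Your argument is correct and self-contained. The paper, however, does not actually prove \cref{modelstructureSet}: it records the statement as the $n=0$ specialization of the Lafont--M\'etayer--Worytkiewicz model structure (\cref{modelstructureondiscretepresheaves}) and flags it as well-known, referring the reader to the literature on model structures on $\set$. So your approach is genuinely different in that you verify the axioms and the generating sets by hand rather than appealing to the transferred structure on $n\cat$ for $n=0$. What your route buys is independence from the heavier machinery of \cite{LMW}; what the paper's route buys is uniformity with the cases $n\ge1$, where a direct check is no longer trivial. Either is adequate here.
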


When $n=1$ the model structure above specializes to the canonical model structure on the category $\cat$ of ordinary categories, attributed to Joyal--Tierney; see \cite{RezkCat} for a nice account. In order to recall the generating cofibrations and acyclic cofibrations, we introduce relevant notation.

\begin{notn}
We denote by $\mathbb I$ the \emph{free isomorphism}, namely the category containing two objects $x$ and $y$, and two non-identity $1$-morphisms $f\colon x\to y$ and $g\colon y\to x$ that are inverse to each other, namely $\id_x=gf$ and $fg=\id_y$. The terminology is due to the fact that for any category $\cC$, functors $\mathbb I\to\cC$ classify precisely isomorphisms in $\cC$.
\end{notn}

\begin{notn}
For $m\ge-1$, we denote by $[m]$ the totally ordered set (regarded as a category) with $m+1$ objects labelled $0,1,\dots,m$. In particular, $[-1]$ is the empty category and $[0]$ is the terminal category.
\end{notn}

\begin{notn}
We denote by $[\rightrightarrows]$ the \emph{free parallel $1$-morphism}, namely the $1$-category
freely generated by the graph
 \[
           \begin{tikzcd}
             x \arrow[r, bend left, ""{name=A, below}] \arrow[r, bend right,  ""{name=B, above}]& y
           \end{tikzcd}.
           \]
\end{notn}

\begin{thm}
The category $\cat$ supports a cofibrantly generated model structure, called the \emph{canonical model structure}, in which
\begin{itemize}[leftmargin=*]
\item the weak equivalences are the equivalences of categories;
\item the fibrations are the isofibrations, and in particular all categories are fibrant;
\item the cofibrations are the \emph{isocofibrations}, i.e, the functors that are injective on objects.
\end{itemize}
Moreover,
\begin{itemize}[leftmargin=*]
 \item the unique generating acyclic cofibration is
$$[0]\hookrightarrow \mathbb I,$$
 \item the generating cofibrations are
 $$[-1]\hookrightarrow[0],\quad[0]\amalg[0]\hookrightarrow[1]\quad\text{ and }\quad[\rightrightarrows]\to[1].$$
\end{itemize}
\end{thm}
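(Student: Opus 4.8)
The plan is to obtain this model structure directly from the recognition theorem for cofibrantly generated model structures, applied with weak equivalences $W$ the equivalences of categories, with generating cofibrations the set $I=\{[-1]\hookrightarrow[0],\ [0]\amalg[0]\hookrightarrow[1],\ [\rightrightarrows]\to[1]\}$, and with generating acyclic cofibrations the set $J=\{[0]\hookrightarrow\mathbb I\}$; the outcome then agrees with the $n=1$ case of \cref{modelstructureondiscretepresheaves}. (Alternatively, this is the classical Joyal--Tierney structure and one may simply invoke \cite{RezkCat}.) Since $\cat$ is locally finitely presentable, all smallness hypotheses are automatic, and $W$ visibly has the two-out-of-three property and is closed under retracts, so the content lies in verifying the two conditions $I\text{-inj}=W\cap J\text{-inj}$ and $J\text{-cell}\subseteq W\cap I\text{-cof}$.

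First I would carry out the elementary lifting computations. A functor has the right lifting property against $[-1]\hookrightarrow[0]$ exactly when it is surjective on objects, against $[0]\amalg[0]\hookrightarrow[1]$ exactly when it is full, against $[\rightrightarrows]\to[1]$ exactly when it is faithful, and against $[0]\hookrightarrow\mathbb I$ exactly when it is an isofibration --- the last because a functor out of $\mathbb I$ is precisely a choice of isomorphism in the target. Hence $J\text{-inj}$ is the class of isofibrations and $I\text{-inj}$ is the class of functors that are simultaneously surjective on objects, full, and faithful. A short diagram chase then identifies this latter class with $W\cap J\text{-inj}$: such a functor is in particular essentially surjective, hence an equivalence of categories, and one lifts an isomorphism along it by producing a candidate inverse pair via fullness and checking it is an inverse via faithfulness; conversely an essentially surjective isofibration is surjective on objects, by transporting the witnessing isomorphisms. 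This also gives $I\text{-inj}\subseteq J\text{-inj}$, hence $J\text{-cof}\subseteq I\text{-cof}$.

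The hard part will be showing that every relative $J$-cell complex lies in $W$. I would begin by analyzing a single pushout of $[0]\hookrightarrow\mathbb I$ along a functor $[0]\to\cC$ naming an object $c$: this is the inclusion of $\cC$ into the category obtained by freely adjoining one new object together with an isomorphism between it and $c$, and since any morphism routed through the new object cancels via that isomorphism, the inclusion is fully faithful; it is essentially surjective because the new object is isomorphic to $c$, so it is an equivalence. Then I would argue by transfinite induction that along a tower of such inclusions the hom-sets between previously present objects are unchanged and every object becomes isomorphic to a previously present one, so that the transfinite composite is again fully faithful and essentially surjective; together with closure of $W$ under retracts this yields $J\text{-cell}\subseteq W$, and $J\text{-cell}\subseteq J\text{-cof}\subseteq I\text{-cof}$ is already in hand. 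The recognition theorem then produces a cofibrantly generated model structure with generating cofibrations $I$, generating acyclic cofibrations $J$, weak equivalences $W$, fibrations the isofibrations, acyclic fibrations the surjective-on-objects full faithful functors, and cofibrations $I\text{-cof}$; every object is fibrant since $\cC\to[0]$ is always an isofibration.

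It remains to identify $I\text{-cof}$ with the class of functors that are injective on objects, and to confirm that $I$ and $J$ are genuine generating sets. If $F\colon A\to B$ is injective on objects, I would build a lift against any surjective-on-objects, full, faithful functor $p$ by choosing $p$-preimages of the objects of $B$ not in the image of $F$, and then sending each morphism of $B$ to the unique morphism of the source with the prescribed endpoints and $p$-image --- existence by fullness, and uniqueness (hence well-definedness and functoriality) by faithfulness. Conversely, a functor that is not injective on objects fails to lift against the projection $E\to[0]$, where $E$ is the contractible groupoid on the set $\Ob A$ (which is surjective on objects, full, and faithful, hence an acyclic fibration), because a filler of the square whose top edge is the identity-on-objects functor $A\to E$ would exhibit $\Ob F$ as a split monomorphism. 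Finally, each member of $I$ is injective on objects and $[0]\hookrightarrow\mathbb I$ is an injective-on-objects equivalence, so by the computations above $I$ and $J$ are indeed sets of generating cofibrations and generating acyclic cofibrations, completing the identification with the canonical model structure. The genuine obstacle throughout is the input $J\text{-cell}\subseteq W$: the pushout step is a pleasant but slightly fiddly combinatorial lemma about freely adjoining isomorphisms, and the transfinite step requires careful filtered-colimit bookkeeping, while everything else is routine diagram chasing.
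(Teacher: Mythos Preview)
The paper does not prove this statement: it is stated as a known result, attributed to Joyal--Tierney, with a reference to \cite{RezkCat} for an account, and is also obtained as the $n=1$ specialization of the Lafont--M\'etayer--Worytkiewicz model structure from \cref{modelstructureondiscretepresheaves}. Your proposal, by contrast, gives a self-contained direct verification via the recognition theorem, and the argument is correct: the identifications of $I\text{-inj}$ and $J\text{-inj}$ are standard, the equality $I\text{-inj}=W\cap J\text{-inj}$ is handled correctly in both directions, the analysis of pushouts of $[0]\hookrightarrow\mathbb I$ and their transfinite composites is accurate, and your characterization of $I\text{-cof}$ as the injective-on-objects functors via the contractible-groupoid trick is a clean way to finish. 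So there is nothing to compare against in the paper beyond noting that you have supplied a proof where the paper only cites one.
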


When $n=2$ we obtain Lack's model structure on the category $2\cat$ of $2$-categories. In order to recall the generating cofibrations and acyclic cofibrations, we introduce relevant notation and terminology.

\begin{notn}
Given any category $\cC$ we denote by $\Sigma\cC$ the \emph{suspension} of $\cC$, namely the $2$-category given by two objects $x$ and $y$ and mapping categories as follows:
\begin{center}
    \begin{tikzpicture}
    \draw (0,0)  node[inner sep=0.2cm](a){} node(x){$x$};
    \draw (2,0) node[inner sep=0.2cm](b){} node(y){$y$};
    
     \draw[->] (a) edge[bend right] node[below](xy){$\cC$}(b);
     \draw[->] (b) edge[bend right] node[above](xy){$\varnothing$}(a);
     \draw[->] (a.90) arc (0:290:2mm)node[xshift=-0.8cm](xx){$[0]$};
     \draw[->] (b.90) arc (-180:-470:2mm)node[xshift=0.8cm](yy){$[0]$.};
    \end{tikzpicture}
\end{center}
\end{notn}

In particular, $\Sigma\mathbb I$ is the \emph{free $2$-isomorphism} and $\Sigma[\rightrightarrows]$ is the \emph{free parallel $2$-morphism}, namely the $2$-category freely generated by the $2$-graph
\[
\begin{tikzcd}
             x \arrow[r, bend left, ""{name=A, below, outer sep=0.05cm}] \arrow[r, bend right,  ""{name=B, above, outer sep=0.05cm}]& y
             \ar[from=A.east, to=B.east, Rightarrow]
             \ar[from=A.west, to=B.west, Rightarrow]
           \end{tikzcd}.
           \]

We recall the following standard $2$-categorical terminology relative to any $2$-category $\cC$; for details see e.g.~\cite{LackCompanion}.
\begin{itemize}[leftmargin=*]
    \item An \emph{adjunction} in $\cC$ consists of a pair of $1$-morphisms $f\colon x\to y$  and $g\colon y\to x$, called left adjoint and right adjoint, together with a pair of $2$-morphisms $\eta\colon\id_{x}\Rightarrow gf$ and $\epsilon\colon fg\Rightarrow\id_y$, called unit and counit, satisfying the triangle identities
    $$(\epsilon*f)\circ(f*\eta)=\id_f\text{ and }(g*\epsilon)\circ(\eta*g)=\id_g.$$
    \item An \emph{adjoint equivalence} in $\cC$ is an adjunction for which the unit and the counit are invertible; in this case, as a consequence of the triangle identities the unit and the counit satisfy the identities
     $$f*\eta=(\epsilon*f)^{-1}\colon f\Rightarrow fgf\text{ and }g*\epsilon=(\eta*g)^{-1}\colon gfg\Rightarrow g.$$  
    \item An \emph{equivalence} in $\cC$ is a $1$-morphism $f\colon x\to y$ for which there exists a $1$-morphism $g\colon y\to x$ such that there are $2$-isomorphisms
     $$\id_x\cong gf\colon x\to x\text{ and }fg\cong\id_y\colon y\to y,$$
    or equivalently (cf.~\cite[2.2]{LackCompanion}) 
    a $1$-morphism that fits into an adjoint equivalence as either a left adjoint or a right adjoint.
\end{itemize}

\begin{notn}
We denote by $\mathbb E$\footnote{This $2$-category is $\textrm{Adj}$ following the terminology of \cite{HNP}.} the \emph{free adjoint equivalence}, namely the $2$-category freely generated by two objects $x$ and $y$, two non-identity $1$-morphisms $f\colon x\to y$  and $g\colon y\to x$, and two non-identity $2$-morphisms $\eta\colon\id_{x}\Rightarrow gf$ and $\epsilon\colon fg\Rightarrow\id_y$ satisfying the relations
 $$f*\eta=(\epsilon*f)^{-1}\colon f\Rightarrow fgf\text{ and }g*\epsilon=(\eta*g)^{-1}\colon gfg\Rightarrow g.$$
The terminology is due to the fact that for any $2$-category $\cC$, $2$-functors $\mathbb E\to\cC$ classify precisely adjoint equivalences in $\cC$.
\end{notn}

 \begin{thm}[\cite{lack1}]
\label{modelstructure2cat}
The category $2\cat$ supports a cofibrantly generated model structure in which
\begin{itemize}[leftmargin=*]
\item the weak equivalences are the \emph{biequivalences}, i.e., biessentially surjective functors that are homwise equivalences of categories;
\item all $2$-categories are fibrant;
\item the cofibrant objects are the $2$-categories whose underlying graph is free.
\end{itemize}
Moreover,
\begin{itemize}
\item the generating acyclic cofibrations are the canonical inclusions\footnote{We warn the reader that in the paper \cite{lack1} Lack identified as a generating acyclic cofibration the inclusion of $[0]$ into the free equivalence instead of the free adjoint equivalence. It was pointed out by Joyal that the original choice did not implement the desired properties (in particular the former inclusion is not a weak equivalence), and the mistake was then corrected in \cite{lack2}.}
$$[0]\hookrightarrow \mathbb E\quad\text{and}\quad[1]=\Sigma[0]\hookrightarrow \Sigma{\mathbb I};$$
\item the generating cofibrations are the canonical maps
$$[-1]\hookrightarrow[0],\ [0]\amalg[0]\hookrightarrow[1],\ \Sigma([0]\amalg[0])\hookrightarrow \Sigma{[1]}\ \text{and}\  \Sigma[\rightrightarrows]\to\Sigma{[1]}.$$
\end{itemize}
\end{thm}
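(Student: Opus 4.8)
This is a theorem of Lack \cite{lack1}, the generating acyclic cofibrations being corrected in \cite{lack2}; here is the strategy I would follow. The plan is to exhibit $2\cat$ as the category of categories enriched over $\cat$, to equip $\cat$ with the canonical model structure recalled above, to check that this structure is compatible with the cartesian product, and then to apply the standard machinery that produces an induced (``Dwyer--Kan'') model structure on the category of $\mathcal V$-enriched categories out of such data.

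The first step is to verify that the canonical model structure on $\cat$ is a cofibrantly generated symmetric monoidal model structure for the cartesian product: the monoidal unit $[0]$ is cofibrant, every object is fibrant, and the pushout--product axiom reduces to a finite verification on the generating (acyclic) cofibrations $[-1]\hookrightarrow[0]$, $[0]\amalg[0]\hookrightarrow[1]$, $[\rightrightarrows]\to[1]$ and $[0]\hookrightarrow\mathbb I$, using that a product of categories with free underlying graph again has free underlying graph and that $\mathbb I\times(-)$ preserves equivalences. One also records that $\mathbb I$ is a coassociative, cocommutative interval. These are precisely the inputs needed to run the enriched-category construction.

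Feeding this into the machine, one reads off that the weak equivalences on $2\cat$ are the $2$-functors that are homwise equivalences of categories and, after passing to $1$-truncations, surjective up to equivalence on objects, and then checks that this class coincides with the biequivalences; that every $2$-category is fibrant because every object of $\cat$ is; that the generating cofibrations are $[-1]\hookrightarrow[0]$ together with the suspensions $\Sigma([-1]\hookrightarrow[0])$, $\Sigma([0]\amalg[0]\hookrightarrow[1])$ and $\Sigma([\rightrightarrows]\to[1])$, which unwind to the four maps in the statement; and that the second generating acyclic cofibration is the suspension $\Sigma([0]\hookrightarrow\mathbb I)=([1]\hookrightarrow\Sigma\mathbb I)$ of the interval inclusion of $\cat$. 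The one genuinely new datum is the first generating acyclic cofibration $[0]\hookrightarrow\mathbb E$: the construction requires a \emph{cofibrant} interval object inside $2\cat$ itself, that is, a cofibrant $2$-category on two objects biequivalent to $[0]$, and the free adjoint equivalence $\mathbb E$ --- whose underlying graph is free on $f$ and $g$ --- is exactly such an object.

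The step I expect to be the main obstacle, and the precise place where Lack's original choice failed (see the footnote to the statement), is the verification of this interval: one must show that $[0]\hookrightarrow\mathbb E$ is a biequivalence, i.e.\ that imposing only the adjoint-equivalence relations $f*\eta=(\epsilon*f)^{-1}$ and $g*\epsilon=(\eta*g)^{-1}$ already forces every hom-category of $\mathbb E$ to be equivalent to $[0]$ --- in contrast to the free equivalence, whose hom-categories are too large for the corresponding inclusion to be a biequivalence. Granting this, together with the dual fact that pushouts and transfinite composites of the generating acyclic cofibrations along arbitrary $2$-functors are again biequivalences --- so that the small object argument yields both functorial factorizations, the $(\text{acyclic cofibration},\text{fibration})$ one also needing the explicit description of acyclic fibrations as the surjective-on-objects homwise-trivial-fibrations --- the remaining axioms (two-out-of-three and closure under retracts for biequivalences, and the two lifting axioms) follow by the usual retract arguments. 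Lastly, inspecting which cell complexes on the four generating cofibrations can arise shows that a $2$-category is cofibrant exactly when no relation among $1$-morphisms is imposed, i.e.\ exactly when its underlying graph is free.
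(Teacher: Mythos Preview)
The paper does not prove this theorem: it is stated as a result of Lack with citation to \cite{lack1,lack2}, so there is no in-paper proof to compare against. Your sketch is a reasonable outline of one way to obtain the result, but it is worth noting that it is \emph{not} the route Lack actually took. Lack's original argument in \cite{lack1,lack2} is a direct verification of the model category axioms: he writes down the candidate classes, constructs the two factorizations by hand (using the small object argument on the explicit generating sets), and checks the lifting and two-out-of-three axioms directly, with the delicate point being exactly the one you identify --- that pushouts of the generating acyclic cofibrations remain biequivalences, which is where the correction in \cite{lack2} enters. The general ``model structure on $\mathcal V$-enriched categories'' machinery you invoke (due to Berger--Moerdijk, Muro, and others) was developed \emph{after} Lack's papers and in part abstracts his argument; so while your approach is valid and arguably cleaner, it is a retrospective repackaging rather than the original proof. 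Either way, since the paper only cites the result, your sketch goes well beyond what is required here.
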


Since all model structures on the categories of $n$-categories are right-transferred from the model structure on the category of $\omega$-categories, as $n$-varies these model structure are compatible in the following sense.

\begin{prop}
For any $n\ge0$, the model structure on $n\cat$ is right-transferred from the model structure $(n+1)\cat$ along the canonical inclusion $i\colon n\cat\to(n+1)\cat$.
In particular, the adjunction
$$(-)_n\colon(n+1)\cat\rightleftarrows n\cat\colon i$$
is a Quillen pair.
\end{prop}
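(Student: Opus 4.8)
The plan is to deduce the statement purely formally from what has already been recalled: for every $k\ge 0$ the model structure on $k\cat$ (as in \cref{modelstructureondiscretepresheaves}) is right-transferred from the model structure on $\omega\cat$ along the canonical full inclusion $u_k\colon k\cat\hookrightarrow\omega\cat$, and these inclusions are compatible in the sense that $u_n=u_{n+1}\circ i$. I would also record at the outset, using \cref{truncation}, that the inclusion $i\colon n\cat\hookrightarrow(n+1)\cat$ has a left adjoint given by the (restricted) $n$-truncation functor $(-)_n$, so that the adjunction $(-)_n\colon(n+1)\cat\rightleftarrows n\cat\colon i$ in the statement genuinely exists; it then remains only to identify the model structure on $n\cat$ as the one right-transferred along $i$ and to observe that this adjunction is a Quillen pair.

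The core of the argument is a \emph{cancellation property} for right transfer along composable functors. Let $f$ be a morphism of $n\cat$. Since the model structure on $n\cat$ is right-transferred from $\omega\cat$ along $u_n$, the morphism $f$ is a fibration (respectively, a weak equivalence) in $n\cat$ exactly when $u_n(f)=u_{n+1}(i(f))$ is one in $\omega\cat$. Since the model structure on $(n+1)\cat$ is right-transferred from $\omega\cat$ along $u_{n+1}$, the morphism $i(f)$ is a fibration (respectively, a weak equivalence) in $(n+1)\cat$ exactly when $u_{n+1}(i(f))$ is one in $\omega\cat$. Chaining these two equivalences, $f$ is a fibration (respectively, a weak equivalence) in $n\cat$ if and only if $i(f)$ is a fibration (respectively, a weak equivalence) in $(n+1)\cat$. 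As a model structure is determined by its classes of fibrations and of weak equivalences, and $n\cat$ is already known to carry such a structure, this exhibits the model structure on $n\cat$ as right-transferred from that on $(n+1)\cat$ along $i$; in particular no transfer-existence criterion needs to be invoked, since we are merely verifying that an already-existing model structure has the defining property relative to $i$.

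For the Quillen pair statement, I would argue that $i$, being the right adjoint in $(-)_n\dashv i$, preserves fibrations and weak equivalences by the previous paragraph, hence also preserves acyclic fibrations, so that $(-)_n\colon(n+1)\cat\rightleftarrows n\cat\colon i$ is a Quillen pair. I do not anticipate a genuine obstacle: the argument is entirely formal. The only points demanding a little care are to keep the meaning of \emph{right-transferred} fixed throughout as ``fibrations and weak equivalences are created by the functor'', and to check the bookkeeping $u_n=u_{n+1}\circ i$ at the level of the underlying inclusions (together with the transitivity of left adjoints, which gives the factorization of $(-)_n\colon\omega\cat\to n\cat$ through $(n+1)\cat$), all of which is immediate from \cref{truncation}.
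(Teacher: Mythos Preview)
Your argument is correct and follows exactly the approach the paper has in mind: the proposition is stated in the paper without an explicit proof, being justified only by the preceding sentence that both model structures are right-transferred from $\omega\cat$, which is precisely the transitivity argument you spell out. You have supplied the details the paper omits, and there is nothing to add.
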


\section{Model structures for weak higher categories}
We recall a family of model structures defined on a presheaf category $\psh{t\Delta}$ whose fibrant objects are a model of $(\infty,n)$-categories for $n\ge0$.
This variant of Riehl--Verity's model of $(\infty,n)$-categories is presented by a model category of presheaves over the category $t\Delta$ satisfying further fibrancy conditions.
For the reader's convenience, we recall the relevant definitions; for a more detailed account on the content of this subsection, see \cite{or}.

We start by giving a description of Verity's category $t\Delta$ from \cite{VerityComplicialI}, in terms of its generators and relations.

\begin{notn}
\label{tDelta}
Let $t\Delta$ be the category defined as follows.
The set of objects is given by
$$\Ob(t\Delta):=\{[m]\text{ for all }m\geq 0\}\cup\{[m]_t\text{ for all }m\geq 1\}.$$
The morphisms in $t\Delta$ are generated by the following maps:
\begin{itemize}
\item cofaces $d^i\colon[m]\to[m+1]$ for $0\leq i \leq m+1$,
\item codegeneracies $s^i\colon[m]\to[m-1]$ for $0 \leq i \leq m-1$,
\item counmarking maps $\varphi\colon[m]\to[m]_t$ for $m\geq 1$,
\item comarking maps
$\zeta^{i}_m\colon[m]_t\to[m-1]$ for $0\leq i \leq m-1$. 
\end{itemize}
subject to the usual cosimplicial identities for cofaces and codegeneracies 
and the additional relations 
\begin{alignat*}{2}
\zeta^i\varphi &=s^i\colon [m+1]\to[m]\text{ for }m\ge1\text{ and }0\leq i \leq m,\\
s^i\zeta^{j+1} &=s^j\zeta^{i} \colon [m+2]_t \to [m]\text{ for }0\leq i \leq j \leq m.
\end{alignat*}
The generating morphisms of $t\Delta$ can be pictured as follows.
\[
\begin{tikzcd}[row sep=large, column sep=large]
{[0]}
 \arrow[r, arrow, shift left=1ex] \arrow[r, arrow, shift right=1ex] 
& 
{[1]}\arrow[l, arrow]  \arrow[r, arrow] \arrow[r, arrow, shift left=1.5ex] \arrow[r, arrow, shift right=1.5ex]
\arrow[d, arrow, "\varphi"]& 
{[2]}
\arrow[l, arrowshorter, shift left=0.75ex] \arrow[l, arrowshorter, shift right=0.75ex] 
\arrow[d, arrow, "\varphi"]&[-0.8cm]\cdots\\
 & 
{[1]_t}  
\arrow[ul, arrowshorter, "\zeta^0"]
& 
{[2]_t} \arrow[ul, arrowshorter, shift left=0.5ex, "\zeta^0"] \arrow[ul, arrowshorter, shift right=0.5ex,"\zeta^1" swap] 
&\cdots
\end{tikzcd}
\]
\end{notn}

\begin{defn}
A \emph{prestratified simplicial set}, or \emph{$t\Delta$-set} for short, is a pre\-sheaf $X\colon t\Delta^{\op}\to\set$. 
A \emph{stratified simplicial set} is a prestratified simplicial set for which the structure maps $X([m]_t)\to X([m])$ are injective.
\end{defn}

If $X$ is a stratified simplicial set, we can think of $X([m]_t)$ as a specified subset of the set $X_m$ of $m$-simplices. Equivalently, the data of a stratified simplicial set consists of a simplicial set together with a collection of marked simplices in dimension at least $1$ containing all degenerate ones. Generalizing the same point of view, we will regard an arbitrary $t\Delta$-set $X$ as a simplicial set in which certain simplices are marked possibly many times, and $X([m]_t)$ is the set of labels for marked $m$-simplices.

Unless otherwise specified, given any simplicial set we read it as a stratified simplicial set in which only the degenerate simplices are marked, each uniquely.

In order to describe the model structure on $\psh{t\Delta}$ for $(\infty,n)$-categories, it is necessary to introduce further notational conventions.\footnote{Due to the involved combinatorial nature of these objects, we encourage the reader who is not familiar with them to have a look at e.g.~ \cite{EmilyNotes} where they are introduced together with insightful discussions.}

\begin{notn} 
\label{preliminarynotation}
We denote
\begin{itemize}[leftmargin=*]
    \item by $\Delta[m]_{(t)}$ the $t\Delta$-set represented by $[m]_{(t)}$, which is in fact a stratified simplicial set.
    \item by $\Delta^k[m]$, for $0\leq k \leq m$, the stratified simplicial set whose underlying simplicial set is $\Delta[m]$ and in which a non-degenerate simplex is marked if and only if it contains the vertices $\{k-1,k,k+1\}\cap [m]$.
    \item by $\Delta^k[m]'$, for $0\leq k \leq m$, the stratified simplicial set obtained from $\Delta^k[m]$ by additionally marking the $(k-1)$-st and $(k+1)$-st face of $\Delta[m]$.
    \item by $\Delta^k[m]''$, for $0\leq k \leq m$, the stratified simplicial set obtained from $\Delta^k[m]'$ by additionally marking the $k$-th face of $\Delta[m]$.
    \item by $\Lambda^k[m]$, for $0\leq k \leq m$, the stratified simplicial set whose underlying simplicial set is the $k$-horn $\Lambda^k[m]$ and whose simplex is marked if and only if it is marked in $\Delta^k[m]$. 
    \item by $\eqDelta$ the stratified simplicial set whose underlying simplicial set is $\Delta[3]$, and the non-degenerate marked simplices consist of all $2$- and $3$-simplices, as well as $1$-simplices $[02]$ and $[13]$.
        \item by $\Delta[3]^\sharp$ the stratified simplicial set whose underlying simplicial set is $\Delta[3]$, and all simplices in positive dimensions are marked.
\end{itemize}
\end{notn}

\begin{defn}
\label{anodynemaps}
An \emph{elementary anodyne extension} is one of the following.
 \begin{enumerate}[leftmargin=*]
  \item  The \emph{complicial horn extension}, i.e., the canonical map
  i.e., the regular inclusion
 $$\Lambda^k[m]\to \Delta^k[m]\text{ for $m\geq 1$ and $0\leq k\leq m$},$$
 which is an ordinary horn inclusion on the underlying simplicial sets.
 \item The \emph{complicial thinness extension}, i.e., the canonical map 
$$\Delta^k[m]' \to \Delta^k[m]''\text{ for $m\geq 2$ and $0\leq k \leq m$},$$
which is an identity on the underlying simplicial set.
\item The \emph{triviality extension} map,
i.e., the canonical map
$$\Delta[l]\to \Delta[l]_t\text{ for $l>n$},$$
which is an identity on the underlying simplicial set.
\item The \emph{saturation extension}, i.e., the canonical map
$$\Delta[l]\star \Delta[3]_{eq}  \to \Delta[l]\star \Delta[3]^{\sharp}\text{ for $l\geq -1$},$$
which is an identity on the underlying simplicial set.
Here, $\star$ denotes the join construction of stratified simplicial sets, which can be found e.g.~in \cite[Observation 34]{VerityComplicialI} or \cite[Def. 3.2.5]{EmilyNotes}.
\end{enumerate}
\end{defn}

Roughly speaking, according to the intuition that $n$-simplices in a $t\Delta$-set or stratified simplicial sets are $n$-morphisms, and that the marked simplices are $n$-morphisms that are invertible in some sense, we can interpret the right-lifting properties with respect to the four classes of elementary anodyne extensions as follows.
\begin{enumerate}[leftmargin=*]
    \item Right-lifting against the complicial horn anodyne extensions encodes the fact that morphisms can be composed.
    \item Right-lifting against the thinness anodyne extensions encodes the fact that composites of equivalences are also equivalences. 
    \item Right-lifting against the $l$-trivial anodyne extensions encodes the fact that morphisms in dimension higher than $l$ are invertible.
    \item Right-lifting against the saturation anodyne extensions encodes the fact that all equivalences admit an inverse in some sense. 
\end{enumerate}

We will be interested in the following objects, which will be the fibrant objects of the desired model structure.

\begin{defn}
An \emph{$(\infty,n)$-category}\footnote{We warn the reader that Verity \cite{VerityComplicialAMS} uses the terminology ``pre-complicial'' to mean something different.
}
is an $n$-precomplicial set, i.e., a $t\Delta$-set that has the right lifting property with respect to the elementary anodyne extensions from \cref{anodynemaps}.
\end{defn}

The terminology is justified by the fact that $n$-precomplicial sets are equivalent to $n$-complicial sets, which are a proposed model of $(\infty,n)$-categories. For the convenience of the reader who is interested in comparing with the existing literature, we include a small dictionary on the terminology appearing in other sources.

\begin{digression}
Historically, the elementary anodyne extensions from \cref{anodynemaps} were introduced to define in terms of right lifting properties several relevant types of $t\Delta$-sets or stratified simplicial sets.
\begin{itemize}[leftmargin=*]
    \item A \emph{strict complicial set}, is a stratified simplicial set that has the right lifting property with respect to the complicial horn and the thinness anodyne extensions and such that the lifts are unique. Strict complicial sets were first considered by Roberts in unpublished work and by Street in \cite[\textsection 5]{StreetOrientedSimplexes}, 
    and Verity showed in \cite{VerityComplicialAMS} that strict complicial sets correspond to $\omega$-categories via a suitable nerve construction (which will be revised as \cref{Veritynerve}).
    As a natural weakening of the previous notion, a \emph{weak complicial set} is a stratified simplicial set that has the right lifting property with respect to the complicial horn and the thinness anodyne extensions. The theory of weak complicial sets is anticipated by Street in \cite{StreetOrientedSimplexes} and is then developed by Verity in \cite{VerityComplicialI}. 
    \item An \emph{$n$-trivial stratified simplicial set} is a stratified simplicial set that has the right lifting property with respect to the $l$-th triviality anodyne extension for $l>n$.
    \item A \emph{saturated weak complicial set} is a weak complicial set that has the right lifting property with respect to the saturation anodyne extensions; the notion of saturation first appears in \cite{EmilyNotes}.
    \item An \emph{$n$-complicial} set is a stratified simplicial set that has the right lifting property with respect to the elementary anodyne extensions; $n$-complicial sets are first consider in \cite{EmilyNotes} as a candidate model for $(\infty,n)$-categories and the terminology is introduced later in \cite{RiehlVerityBook}. As a variant, an \emph{$n$-precomplicial set} is a $t\Delta$-set that has the right lifting property with respect to the elementary anodyne extensions. We introduced $n$-complicial sets in \cite{or}, where we also show that there are two Quillen equivalent model structures on the categories of stratified simplicial sets and $t\Delta$-sets; see \cite[Prop.1.31]{or} for more details.
\end{itemize}
\end{digression}

We now recall the model structure for $(\infty,n)$-categories.

\begin{thm}[{\cite[Theorem 1.28]{or}}]
\label{modelstructureondiscretepresheaves}
The category $\pstrat$ supports a cofibrantly generated model structure where
\begin{itemize}[leftmargin=*]
\item the fibrant objects are precisely the $(\infty,n)$-categories;
\item the cofibrations are precisely the monomorphisms.
\end{itemize}
Moreover,
\begin{itemize}[leftmargin=*]
\item the generating cofibrations are the canonical inclusions
$$\partial\Delta[m]\to\Delta[m]\text{ for $m\ge0$ and }\Delta[m]\to\Delta[m]_t\text{ for }m\ge1.$$
\end{itemize}
\end{thm}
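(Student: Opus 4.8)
The plan is to realize the asserted model structure by Cisinski's recognition theorem for model structures on presheaf categories, following the scheme by which Verity constructs the model structure for weak complicial sets and Riehl--Verity refine it to the $(\infty,n)$-setting. First I would record that $\pstrat=\set^{t\Delta^{\op}}$ is a presheaf category over the small category $t\Delta$, hence locally presentable, and that $t\Delta$ carries an Eilenberg--Zilber (elegant Reedy) structure. Using the resulting skeletal filtration of a $t\Delta$-set --- attach each non-degenerate simplex along its boundary, then re-mark it --- one checks that every monomorphism of $t\Delta$-sets is a transfinite composite of pushouts of maps in the set
$$I=\{\partial\Delta[m]\hookrightarrow\Delta[m]:m\ge 0\}\cup\{\Delta[m]\hookrightarrow\Delta[m]_t:m\ge 1\}.$$
This simultaneously fixes the cofibrations to be the monomorphisms and identifies the generating cofibrations as $I$, as claimed in the statement.

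Second, I would equip $\pstrat$ with an exact cylinder --- the marked (Gray-type) cylinder $-\times\Delta[1]^{\sharp}$ of prestratified sets --- and let $\mathsf{An}$ be the smallest class closed under pushout, transfinite composition and retracts that contains the elementary anodyne extensions of \cref{anodynemaps} together with the pushout-products of the generating cofibrations in $I$ with the two endpoint inclusions of the cylinder. The principal obstacle, and the only genuinely technical step, is to verify that $\mathsf{An}$ is a \emph{class of anodyne extensions} relative to this cylinder in Cisinski's sense: that the pushout-product of any monomorphism with a generating anodyne extension is again anodyne, and that the cylinder endpoints are anodyne. For the complicial horn and thinness extensions (items (1)--(2) of \cref{anodynemaps}) this is precisely the intricate combinatorial lemma underlying Verity's theorem on weak complicial sets \cite{VerityComplicialI}; for the triviality and saturation extensions (items (3)--(4)) the analogous verification is comparatively formal, using that these maps are identities on underlying simplicial sets. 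Granting this, Cisinski's theorem yields a cofibrantly generated model structure on $\pstrat$ whose cofibrations are the monomorphisms, whose fibrant objects are those with the right lifting property against $\mathsf{An}$, and whose weak equivalences are detected via cylinder-homotopy classes of maps into fibrant objects; left properness is automatic since all objects are cofibrant.

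Third, I would identify the fibrant objects with the $(\infty,n)$-categories. An exponentiation (internal-hom) argument --- Verity's for items (1)--(2), together with its extension to the triviality and saturation extensions --- shows that right lifting against all of $\mathsf{An}$ is equivalent to right lifting against the elementary anodyne extensions alone, and the latter is by definition the property of being an $n$-precomplicial set; hence the fibrant objects are exactly the $(\infty,n)$-categories. If one prefers to isolate the hard combinatorics, an alternative is to first run the Cisinski construction with only the complicial horn and thinness extensions, obtaining the $t\Delta$-set analogue of the weak complicial model structure, and then left Bousfield localize at the sets of triviality and saturation extensions; since that model structure is combinatorial and left proper, the localization exists, its cofibrations remain all monomorphisms, and its fibrant objects are the weak complicial $t\Delta$-sets that are local with respect to the localizing maps, which one checks coincide with those admitting the right lifting property against items (3)--(4). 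Either route produces the model structure of the statement with generating cofibrations $I$; this is \cite[Theorem 1.28]{or}.
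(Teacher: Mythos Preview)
The paper does not prove this statement: it is quoted verbatim from \cite[Theorem~1.28]{or} and no argument is given here. So there is no ``paper's own proof'' to compare against.

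That said, your proposal is a faithful reconstruction of how the result is actually established in the cited reference. There the authors proceed exactly along the lines you sketch: one first shows that the monomorphisms of $t\Delta$-sets are cellularly generated by the set $I$ (this is \cite[Proposition~1.13]{or}, proved via the skeletal filtration you describe), then invokes Cisinski--Olschok/Verity machinery with the elementary anodyne extensions as the localizing set to obtain the model structure. Your alternative route---first build the $t\Delta$-analogue of Verity's weak complicial model structure and then left Bousfield localize at the triviality and saturation maps---is also exactly one of the two presentations given in \cite{or}. One minor caution: the cylinder actually used in \cite{or} (following Verity) is built from the stratified join or Gray-type tensor rather than the bare cartesian product with $\Delta[1]^\sharp$, since the pushout-product verifications for the complicial horn extensions are carried out with respect to that monoidal structure; but this does not affect the overall architecture of your argument.
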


\begin{prop}
For any $n\ge0$, the identity
$$\id\colon\psh{t\Delta}_{(\infty,n+1)}\rightleftarrows\psh{t\Delta}_{(\infty,n)}\colon \id$$
defines a Quillen pair between the model structure on $\psh{t\Delta}$ for $(\infty,n)$-categories and for $(\infty,n+1)$-categories. In particular any $(\infty,n)$-category is also an $(\infty,n+1)$-category.
\end{prop}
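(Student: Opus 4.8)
The plan is to reduce everything to the shared combinatorics of generating (acyclic) cofibrations, using that the two model structures of \cite[Theorem 1.28]{or} live on the same category, have the same cofibrations (the monomorphisms), and the same generating cofibrations. It follows at once that the left adjoint $\id\colon\psh{t\Delta}_{(\infty,n+1)}\to\psh{t\Delta}_{(\infty,n)}$ preserves cofibrations and that the two classes of acyclic fibrations coincide, so the only point needing an argument is that $\id$ also preserves acyclic cofibrations; since cofibrations are already preserved, this amounts to showing that every weak equivalence of the $(\infty,n+1)$-model structure is a weak equivalence of the $(\infty,n)$-model structure.

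The combinatorial input I would record first is that the set of elementary anodyne extensions of \cref{anodynemaps} for the parameter $n+1$ is contained in the one for the parameter $n$: the complicial horn, complicial thinness and saturation extensions are independent of the parameter, while the triviality extensions for $n+1$ are exactly $\{\Delta[l]\to\Delta[l]_t \mid l>n+1\}$, which is contained in $\{\Delta[l]\to\Delta[l]_t \mid l>n\}$, the latter having in addition only the single map $\Delta[n+1]\to\Delta[n+1]_t$. This inclusion already gives the ``in particular'' clause, since a $t\Delta$-set with the right lifting property against all elementary anodyne extensions for $n$ has it against those for $n+1$; hence every $(\infty,n)$-category is an $(\infty,n+1)$-category. (Alternatively this is a formal consequence of the Quillen pair, right Quillen functors preserving fibrant objects.) To finish, I would invoke the construction of these model structures in \cite{or}, where a set of generating acyclic cofibrations is produced from the elementary anodyne extensions by the usual saturation procedure: Leibniz (pushout-) products with the generating cofibrations, then closure under pushout, transfinite composition and retract. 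Since both the elementary anodyne extensions and the generating cofibrations for $n+1$ sit inside those for $n$, running the same saturation inside $\psh{t\Delta}_{(\infty,n)}$ exhibits every generating acyclic cofibration of $\psh{t\Delta}_{(\infty,n+1)}$ as an acyclic cofibration, in particular a weak equivalence, of $\psh{t\Delta}_{(\infty,n)}$; by the standard criterion for cofibrantly generated model structures this makes $\id\colon\psh{t\Delta}_{(\infty,n+1)}\to\psh{t\Delta}_{(\infty,n)}$ a left Quillen functor.

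The only genuinely non-formal ingredient is the inclusion of weak equivalences, which --- unlike the cofibrations and acyclic fibrations --- is not visible through an explicit lifting property and so must be read off from the internal construction of \cite{or}. The cleanest packaging is that the $(\infty,n)$-model structure is the left Bousfield localization of the $(\infty,n+1)$-model structure at the extra triviality extension $\Delta[n+1]\to\Delta[n+1]_t$, whose saturation accounts precisely for the difference between the two classes of anodyne maps; the universal property of left Bousfield localization then yields both the inclusion of weak equivalences and the left Quillen property of the identity simultaneously. I expect the main thing to get right to be exactly this matching --- identifying the difference of the two generating-acyclic-cofibration sets with (the saturation of) the single map $\Delta[n+1]\to\Delta[n+1]_t$ --- while everything else is formal manipulation with the common generating cofibrations.
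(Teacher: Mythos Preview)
The paper states this proposition without proof, so there is no argument to compare against directly. Your approach is correct and is exactly the kind of argument the authors presumably had in mind: the two model structures share their cofibrations, and the elementary anodyne extensions for the parameter $n+1$ form a subset of those for $n$ (the only difference being the single triviality extension $\Delta[n+1]\to\Delta[n+1]_t$), so the $(\infty,n)$-model structure is the left Bousfield localization of the $(\infty,n+1)$-model structure at that map. This immediately yields the Quillen pair and the inclusion of fibrant objects.

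One small comment: your middle paragraph, which tries to trace the generating acyclic cofibrations through the explicit construction of \cite{or}, is more than is needed and depends on details of that construction you do not spell out. The Bousfield-localization packaging you give at the end is both shorter and self-contained: once you know the two model structures exist, have the same cofibrations, and that the fibrant objects for $n$ are among those for $n+1$ (which follows from your containment of elementary anodyne extensions), it is a general fact about model categories (using, for instance, that cofibrations and fibrant objects determine the model structure, as in \cite[Prop.~E.1.10]{JoyalVolumeII} cited later in the paper) that the identity is left Quillen in the stated direction. I would lead with that and drop the appeal to the internal saturation procedure.
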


We want to focus on $n=0,1,2$ and relate the (model) categories of strict $n$-categories and of $(\infty,n)$-categories by means of meaningful adjunctions.

\section{The Roberts--Street nerve-categorification adjunction}
\label{naivenervesection}

In this section we consider an adjunction between $n\cat$ and $\psh{t\Delta}$, which will then be refined to one with better homotopical properties in the next section. While we present the construction for general $n\ge0$, we then explain which shapes these constructions take for $n=0,1,2$.

The key ingredient to define the desired adjunction is the following.

\begin{notn}
We denote by $\mathcal O[m]$ the \emph{$m$-th oriental}, as defined by Street in \cite[\textsection2]{StreetOrientedSimplexes}; see also \cite[\textsection2.2]{EmilyNotes} for an alternative account.
\end{notn}

While the precise definition of orientals is quite involved, the reader can keep in mind that $\mathcal O[m]$ is the free $m$-category generated by the standard simplex $\Delta[m]$. For instance, it contains $m+1$ objects, and a top $m$-morphism between two suitable pasted composites of $(m-1)$-morphisms. It is straightforward to see that orientals form a cosimplicial object in $\omega\cat$.

In low degrees, we find that the $0$-th oriental is the singleton $\mathcal O[0]\cong\{0\}$, the $1$-st oriental is the walking arrow category $\mathcal O[1]\cong[1]$, the $2$-nd oriental $\mathcal O[2]$ is the $2$-category depicted as
\[\begin{tikzcd}[baseline=(current  bounding  box.center)]
 & 1 \arrow[rd, ""]  & \\
    0 \arrow[ru, "{}"]
  \arrow[rr, ""{below}, ""{name=D,inner sep=1pt}]
  && 2,
  \arrow[Rightarrow, from=D, 
 to=1-2, shorten >= 0.1cm, shorten <= 0.1cm, ""]
\end{tikzcd}
\]
    and the $3$-rd oriental $\mathcal O[3]$ is the $3$-category that can be depicted as
    \[\simpftri{}{}{}{}{}{}{}{}{}\simpftricontinued{}.\]

Orientals were used by Street in \cite[\textsection5]{StreetOrientedSimplexes} to construct the following adjunction, which was historically the first attempt to produce an object of a simplicial nature starting from a higher category.

 \begin{const}
 \label{Veritynerve}
 The cosimplicial object in $\omega\cat$
 given by $[m]\mapsto\mathcal O[m]$
 induces an adjunction
 $$\hostreet\colon\psh{\Delta}\rightleftarrows\omega\cat\colon \Nstreet.$$
 We call $\Nstreet$ the \emph{Street nerve} and $\hostreet$ the \emph{Street categorification}.
 \end{const}

In particular, for any $\omega$-category $\cC$ the set of $m$-simplices of the Street nerve is given by
 $$\Nstreet(\cC)_m\cong\Hom_{\omega\cat}(\mathcal O[m],\cC).$$

By composing with the truncation adjunction from \cref{truncation} we obtain another relevant adjunction. The cosimplicial object that defines it is the following.

\begin{notn}
For $m\ge0$, we denote by $\cO_n[m]$ the \emph{$n$-truncated $m$-th oriental} $\mathcal O[m]$, namely the $n$-truncation of the $m$-th oriental.
\end{notn}
In low degrees, we find that $\mathcal O_0[1]$ is a singleton, $\mathcal O_1[2]\cong[2]$ is the category with two composable morphisms, $\mathcal O_2[3]$ the $2$-category depicted as
$$
\simpf{}{}{}{}{}{}{}{}{}\simpfcontinued{}.
$$

The following adjunction was also considered in \cite[\textsection 5.10]{AraMaltsiniotisVers}.
The right adjoint is the ordinary nerve for $n=1$ and was first considered by Duskin \cite{duskin} in the case $n=2$.
 \begin{const}
 The cosimplicial object in $n\cat$ 
 given by $[m]\mapsto\mathcal O_n[m]$
 induces an adjunction
 $$\hostreet_n\colon\psh{\Delta}\rightleftarrows n\cat\colon\Nstreet_n.$$
 We call $\Ntdelta_n$ the \emph{$n$-Street nerve} and $\hotdelta_n$ the \emph{$n$-Street categorification}.
 \end{const}

In particular, for any $\omega$-category $\cC$ the set of $m$-simplices of the Street nerve is given by
 $$\Nstreet_n(\cC)_m\cong \Hom_{\omega\cat}(\mathcal O[m],\cC).$$

 When working with $n>1$, simplices in dimension $m$ in the nerve of an $n$-category $\cC$ play the twofold role of witnessing $m$-morphisms of $\cC$, as well as witnessing the composite of morphisms in lower dimension of $\cC$.
 It then became apparent that it is useful to record which simplices in the Street nerve of an $n$-category $\cC$ were witnessed by morphisms of $\cC$ that were to be considered equivalences in some sense. The first attempt in this direction is due to Street and Roberts, who considered the Street nerve of an $n$-category $\cC$ and marked the simplices witnessed by identity morphisms of $\cC$. The resulting nerve is the right adjoint in the following adjunction.

 \begin{const}
Consider the co-$t\Delta$-object in $n\cat$
 given on objects by 
 $$[m]\mapsto\mathcal O_n[m]\quad\text{ and }\quad[m]_t\mapsto\left\{
 \begin{array}{ccc}
\cO_{m-1}[m]&m\le n+1\\
 \mathcal O_n[m]&m\ge n+1
 \end{array}
 \right.$$
 with cosimplicial structure induced by that of the truncated oriental $\mathcal O_n[m]$, and with further structure maps given by taking the $(m-1)$-truncation
 $\mathcal{O}_n[m]\to (\mathcal{O}_n[m])_{m-1}\cong\mathcal O_{m-1}[m]$
 and by factoring the codegeneracy maps $\mathcal O_n[m]\to\mathcal O_n[m-1]$ through $\mathcal O_{m-1}[m]$ or $\mathcal O_n[m]$.

This co-$t\Delta$-object
 induces an adjunction
 $$\hotdelta_n\colon\psh{t\Delta}\rightleftarrows n\cat\colon\Ntdelta_n.$$
 We call $\Ntdelta$ the \emph{Roberts-Street $n$-nerve} and $\hotdelta$ the \emph{Roberts-Street $n$-cate\-gori\-fication}.
 \end{const}

We see that the set of $m$-simplices of $\Ntdelta_n(\cC)$ is given by
   $$\Ntdelta_n(\cC)_m\cong\Hom_{n\cat}(\mathcal O_n[m],\cC)\cong \Hom_{\omega\cat}(\mathcal O[m],\cC).$$
 Therefore $\Ntdelta_n(\cC)$ is a stratified simplicial set whose underlying simplicial set is its Street nerve $\Nstreet_n(\cC)$,
 and the marked $m$-simplices are those witnessed by an identity $m$-morphism in $\cC$, so in particular and all $m$-simplices for $m>n$.

The main property of the nerve $\Ntdelta_n$ is the following, which is a reformulation of a theorem that was first conjectured by Roberts and Street \cite[\textsection 5]{StreetOrientedSimplexes} 
and later shown by Verity \cite{VerityComplicialAMS}.

\begin{thm}
The nerve $\Ntdelta_n\colon n\cat\to\psh{t\Delta}$ is a fully faithful functor
whose essential image is given by the stratified simplicial sets that are strict $n$-complicial sets.
 \end{thm}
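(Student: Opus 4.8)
The plan is to deduce the statement from the theorem of Verity \cite{VerityComplicialAMS} for $\omega$-categories, of which it is a reformulation. Write $\Ntdelta\colon\omega\cat\to\psh{t\Delta}$ for the $\omega$-categorical analogue of the construction above, built from the co-$t\Delta$-object given on objects by $[m]\mapsto\cO[m]$ and $[m]_t\mapsto\cO_{m-1}[m]$; Verity's result is that $\Ntdelta$ is fully faithful with essential image the strict complicial sets. The first step is to observe that $\Ntdelta_n=\Ntdelta\circ i$ as functors $n\cat\to\psh{t\Delta}$, where $i\colon n\cat\hookrightarrow\omega\cat$ is the canonical inclusion. On the underlying simplicial sets this is the identification $\Hom_{\omega\cat}(\cO[m],i\cC)\cong\Hom_{n\cat}(\cO_n[m],\cC)$ provided by the truncation adjunction $(-)_n\dashv i$ of \cref{truncation}, and on marked simplices one uses that the $n$-truncation $(\cO_{m-1}[m])_n$ equals $\cO_{m-1}[m]$ when $m\le n+1$ and $\cO_n[m]$ when $m\ge n+1$, which are precisely the two cases in the construction of $\Ntdelta_n$. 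Since $i$ is fully faithful, the composite $\Ntdelta_n$ is fully faithful.

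The second step is to identify the essential image. As $\Ntdelta$ is essentially surjective onto the strict complicial sets, each of these is $\Ntdelta(\cC)$ for some $\omega$-category $\cC$, and the essential image of $\Ntdelta_n$ consists of those $\Ntdelta(\cC)$ with $\cC$ in the full subcategory $n\cat\subseteq\omega\cat$, i.e.\ with $\cC$ having only identity cells in dimensions above $n$. So it is enough to prove that, for an $\omega$-category $\cC$, the nerve $\Ntdelta(\cC)$ is $n$-trivial if and only if $\cC$ has only identity cells above dimension $n$. One direction is immediate: if $\cC$ is such, then for every $m>n$ any $\omega$-functor $\cO[m]\to\cC$ sends the top $m$-cell of $\cO[m]$ to an identity, hence every $m$-simplex of $\Ntdelta(\cC)$ is witnessed by an identity and $\Ntdelta(\cC)([m]_t)=\Ntdelta(\cC)([m])$. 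For the converse, if $\cC$ has a non-identity $m$-cell $\gamma$ with $m>n$, one builds an $\omega$-functor $\cO[m]\to\cC$ whose top $m$-cell is $\gamma$ by sending the source, resp.\ target, pasting composite of the top cell of $\cO[m]$ to the source, resp.\ target, of $\gamma$ and all auxiliary lower-dimensional faces to suitable identities; this uses Street's explicit description of the orientals \cite{StreetOrientedSimplexes}. Such a functor is a non-marked $m$-simplex of $\Ntdelta(\cC)$, so $\Ntdelta(\cC)$ is not $n$-trivial. Putting the two steps together, the essential image of $\Ntdelta_n$ is the class of stratified simplicial sets that are both strict complicial and $n$-trivial, which is exactly the class of strict $n$-complicial sets.

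I expect the realization assertion in the converse of the second step — that every $m$-cell of an $\omega$-category is the top cell of some $\omega$-functor out of $\cO[m]$ — to be the only genuinely technical point; it is a routine if slightly tedious induction on $m$ using the combinatorics of the orientals, and is in any case already contained in Verity's analysis of $\Ntdelta$, so it may be cited rather than reproved. The one remaining thing to be careful about is purely definitional: one should record that a \emph{strict $n$-complicial set} is by definition an $n$-trivial strict complicial set, imposing no saturation condition — consistently with the fact that, for example, $\Ntdelta_1(\mathbb I)$ lies in the essential image of $\Ntdelta_1$ despite not being saturated.
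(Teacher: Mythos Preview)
The paper does not supply a proof of this theorem: it is stated as ``a reformulation of a theorem that was first conjectured by Roberts and Street\ldots and later shown by Verity \cite{VerityComplicialAMS}'', with no accompanying argument. Your proposal is therefore not competing against a proof in the paper but rather spelling out what ``reformulation'' means, and your two-step reduction to Verity's $\omega$-categorical result is the natural way to do that.

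Your argument is correct. The identification $\Ntdelta_n=\Ntdelta\circ i$ via the truncation adjunction is exactly right, and the computation $(\cO_{m-1}[m])_n=\cO_{m-1}[m]$ or $\cO_n[m]$ according to whether $m\le n+1$ or $m\ge n+1$ matches the defining co-$t\Delta$-object on the nose. For the essential image, the equivalence ``$\Ntdelta(\cC)$ is $n$-trivial $\Leftrightarrow$ $\cC$ lies in $n\cat$'' is the crux; the only substantive point, as you note, is producing for a non-identity $m$-cell $\gamma$ an $\omega$-functor $\cO[m]\to\cC$ with top cell $\gamma$. A clean way to phrase this (avoiding ad hoc combinatorics) is that there is an $\omega$-functor $\cO[m]\to\Sigma^m[0]$ collapsing all but the top cell to identities, which one then composes with the classifying map $\Sigma^m[0]\to\cC$ of $\gamma$; this map is implicit in Street's description of orientals and explicit in the parity-complex framework used by Verity. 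Your closing caveat about the definition of ``strict $n$-complicial set'' (no saturation imposed) is well taken and consistent with how the term is used elsewhere in the paper.
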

 
The property of fully faithfulness is equivalently expressed as follows.
 
\begin{prop}
\label{naivecounitiso}
For any $n$-category $\cC$ the counit of the adjunction above yields an isomorphism of $n$-categories
  \[
    \begin{tikzcd}
     \epsilon^{RS}_{\cC}\colon\hotdelta_n(\Ntdelta_n(\cC))\arrow[r, "\cong"]&\cC.
    \end{tikzcd}
    \]
 \end{prop}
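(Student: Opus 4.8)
The plan is to deduce \cref{naivecounitiso} directly from the fully faithfulness theorem for $\Ntdelta_n$ stated just above, using the standard categorical equivalence between fully faithfulness of a right adjoint and invertibility of the counit. Recall that for any adjunction $F \dashv G$, the right adjoint $G$ is fully faithful if and only if the counit $\epsilon\colon FG \Rightarrow \id$ is a natural isomorphism. Since the theorem asserts that $\Ntdelta_n\colon n\cat \to \psh{t\Delta}$ is fully faithful, applying this general fact to the adjunction $\hotdelta_n \dashv \Ntdelta_n$ immediately gives that the counit $\epsilon^{RS}_\cC\colon \hotdelta_n(\Ntdelta_n(\cC)) \to \cC$ is an isomorphism for every $n$-category $\cC$.

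Concretely, I would proceed as follows. First, recall the triangle identities for the adjunction: $(\epsilon^{RS}_{\Ntdelta_n(\cC)}) \circ (\Ntdelta_n(\eta^{RS}_{\cC})) = \id$ appropriately, but the cleaner route is the direct computation with hom-sets. For $n$-categories $\cC, \cD$, the composite
\[
\Hom_{n\cat}(\cC,\cD) \xrightarrow{\ \Ntdelta_n\ } \Hom_{\psh{t\Delta}}(\Ntdelta_n \cC, \Ntdelta_n \cD) \xrightarrow{\ (\text{adjunction})\ } \Hom_{n\cat}(\hotdelta_n \Ntdelta_n \cC, \cD)
\]
is precomposition with $\epsilon^{RS}_\cC$. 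The first map is a bijection by fully faithfulness, and the second is a bijection since it is the adjunction isomorphism; hence precomposition with $\epsilon^{RS}_\cC$ is a bijection $\Hom_{n\cat}(\cC,\cD) \xrightarrow{\cong} \Hom_{n\cat}(\hotdelta_n\Ntdelta_n\cC,\cD)$ for all $\cD$. By the Yoneda lemma, $\epsilon^{RS}_\cC$ is an isomorphism. Naturality of the whole construction in $\cC$ is automatic since $\epsilon^{RS}$ is a natural transformation.

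I expect there to be essentially no obstacle here: the statement is a formal consequence of the already-established fully faithfulness theorem, and the only thing to be careful about is correctly matching the direction of the counit and confirming that the relevant adjunction is the one appearing in the preceding \texttt{const} (namely $\hotdelta_n \dashv \Ntdelta_n$). If a more hands-on argument is desired, one could alternatively describe $\hotdelta_n(\Ntdelta_n(\cC))$ explicitly via the coend formula $\hotdelta_n(X) = \int^{[m] \in t\Delta} X([m]_{(t)}) \cdot \mathcal{O}_n[m]_{(t)}$ and observe that feeding in $X = \Ntdelta_n(\cC)$ reconstructs $\cC$ from its simplices and their compatibilities; but this is more laborious and the formal argument via the Yoneda lemma is both shorter and more transparent, so that is the route I would take. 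The only genuinely non-trivial input — the statement that $\Ntdelta_n$ is fully faithful (equivalently, that its essential image is the strict $n$-complicial sets) — is the theorem cited to Verity and is assumed as given.
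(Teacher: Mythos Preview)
Your proposal is correct and matches the paper's approach exactly: the paper does not give a separate proof of \cref{naivecounitiso} but simply introduces it with the sentence ``The property of fully faithfulness is equivalently expressed as follows,'' i.e., it invokes precisely the standard fact you use that a right adjoint is fully faithful if and only if the counit is an isomorphism. Your Yoneda argument spelling this out is fine and entirely in the spirit of what the paper intends.
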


In order to get the reader familiar with the construction, we will now specialize these constructions in the cases $n=0,1,2$.

For $n=0$, the $0$-truncated orientals are singletons, and the nerve-categori\-fication adjunction specializes to the following.

 \begin{const}
 The co-$t\Delta$-object in $\set$
 given by $[m]_{(t)}\mapsto\{0\}$
 induces an adjunction
 $$\hotdelta_0\colon\psh{t\Delta}\rightleftarrows\set\colon\Ntdelta_0.$$
 \end{const}
In particular, $\Ntdelta_0(X)$ reads a set $X$ as a discrete $t\Delta$-set, or equivalently as a simplicial set constant at $X$ in which all simplices are marked, each uniquely.

 For $n=1$, the $1$-truncated $m$-th oriental is the poset $[m]$, and the nerve-categorification adjunction specializes to the following.
 
 \begin{const}
 The co-$t\Delta$-object in $\cat$
 given by
 $$[m]\mapsto[m]\quad\text{ and }\quad[m]_t\mapsto\left\{
 \begin{array}{ccc}
 {[0]}&m=1\\
{[m]}&m\ge1
 \end{array}
 \right.$$
 induces an adjunction
 $$\hotdelta_1\colon\psh{t\Delta}\rightleftarrows\cat\colon\Ntdelta_1.$$
 \end{const}
 
In particular, $\Ntdelta_1(\cC)$ is a stratified simplicial set whose underlying simplicial set is the ordinary nerve $N(\cC)$,
 the marked $1$-simplices are the degenerate ones, and all $m$-simplices for $m>1$.

For $n=2$, the $2$-truncated orientals $\mathcal O_2[m]$ seen as simplicial categories by applying the nerve homwise coincide with the simplicial categories
$$N_*(\mathcal O_2[m])\cong\mathfrak C[\Delta[m]],$$
which were introduced by Cordier--Porter \cite{CordierPorterHomotopyCoherent} and later used by Joyal \cite{Joyal2007} and Lurie \cite[Def.1.1.5.1]{htt} to define the homotopy coherent nerve adjunction.

In this case, the nerve-categorification adjunction specializes to the following.

 \begin{const}
The co-$t\Delta$-object in $\twocat$
 given by
 $$[m]\mapsto\mathcal O_2[m]\quad\text{ and }\quad[m]_t\mapsto\left\{
 \begin{array}{ccc}
 [0]&m=1\\
 {[2]}&m=2\\
 \mathcal O_2[m]&m\ge 3
 \end{array}
 \right.$$
 induces an adjunction
 $$\hotdelta_2\colon\psh{t\Delta}\rightleftarrows2\cat\colon\Ntdelta_2.$$
 \end{const}

 In particular, $\Ntdelta_2(\cC)$ is a stratified simplicial set whose underlying simplicial set is its Street--Duskin nerve  $\Nstreet_2(\cC)$,
 the marked $1$-simplices are the degenerate ones, the marked $2$-simplices are those that witness commutative triangles in $\cC$ and all $m$-simplices are marked in dimension $m>2$.

For the reader's convenience, we recall that the Street--Duskin nerve \linebreak $\Nstreet_2(\cC)$ of a $2$-category $\cC$ is a $3$-coskeletal simplicial set in which
\begin{enumerate}[leftmargin=*]
\item[(0)] a $0$-simplex consists of an object of $\cC$
$$x;$$
    \item a $1$-simplex consists of a $1$-morphism of $\cC$
     \[
\begin{tikzcd}
    x \arrow[rr, "a"{below}]&& y;
\end{tikzcd}
\]
    \item a $2$-simplex consists of a $2$-morphism of $\cC$ of the form
    $$\begin{tikzcd}[baseline=(current  bounding  box.center)]
 & y \arrow[rd, "b"]  & \\
    x \arrow[ru, "{a}"]
  \arrow[rr, "c"{below}, ""{name=D,inner sep=1pt}]
  && z;
  \arrow[Rightarrow, from=D, 
 to=1-2, shorten >= 0.1cm, shorten <= 0.1cm, ""]
\end{tikzcd}$$
\item a $3$-simplex consists of four $2$-morphisms of $\cC$ that satisfy the following relation.
\[
\simpfver{d}{c}{e}{a}{b}{f}{}{}{}\simpfvercontinued{}{x}{y}{z}{w}
\]
\end{enumerate}

 \section{The natural nerve-categorification adjunction}
 \label{naturalnervesection}
 
For the easy case of $n=0$, the nerve-categorification adjunction introduced in the previous section is well-behaved homotopically, as recorded by the following proposition.
 
    \begin{prop}
    \label{modelstructureonSettransferred}
   The model structure on $\set$ from \cref{modelstructureSet} is right-transferred from the Riehl--Verity model structure on $\psh{t\Delta}$ for $(\infty,0)$-categories along the natural nerve $\Ntdelta_0\colon\set\to\psh{t\Delta}$. In particular, the adjunction
   $$\hotdelta_0\colon\psh{t\Delta}_{(\infty,0)}\leftrightarrows\set\colon\Ntdelta_0$$
   is a Quillen pair.
 \end{prop}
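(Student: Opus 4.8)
The plan is to recognize the model structure of \cref{modelstructureSet} as the one produced by the standard right transfer criterion for cofibrantly generated model categories, applied to the Riehl--Verity model structure on $\psh{t\Delta}$ for $(\infty,0)$-categories and the adjunction $\hotdelta_0\dashv\Ntdelta_0$. Since $\set$ is locally presentable, the small object argument causes no trouble, so the only hypothesis to verify is that $\Ntdelta_0$ sends relative $\hotdelta_0(J)$-cell complexes to weak equivalences, where $J$ is a set of generating acyclic cofibrations of $\psh{t\Delta}$ for $(\infty,0)$-categories.

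First I would make the left adjoint explicit: for a $t\Delta$-set $X$, the set $\hotdelta_0(X)$ is the set of connected components of the underlying simplicial set of $X$, i.e.\ the coequalizer of $X([1])\rightrightarrows X([0])$. This follows from the defining adjunction: a map $X\to\Ntdelta_0(S)$ is the same as a map of underlying simplicial sets from $X$ to the constant simplicial set on $S$ (the markings of $\Ntdelta_0(S)$ being maximal and unique impose no condition), equivalently a function from the set of connected components of $X$ to $S$. In particular $\hotdelta_0$ preserves colimits. I would then check by inspection that $\hotdelta_0$ carries every elementary anodyne extension of \cref{anodynemaps} to a bijection of sets: the complicial horn extensions are horn inclusions $\Lambda^k[m]\hookrightarrow\Delta[m]$ with $m\geq1$ on underlying simplicial sets, hence isomorphisms on connected components (both sides are nonempty and connected for $m\geq2$, and single vertices for $m=1$), while the complicial thinness, triviality and saturation extensions are identities on underlying simplicial sets. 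Since the generating acyclic cofibrations of $\psh{t\Delta}$ for $(\infty,0)$-categories are obtained from the elementary anodyne extensions by colimits (see \cite{or}), under which bijections of sets are stable, $\hotdelta_0$ sends every element of $J$ to a bijection. Consequently $\hotdelta_0(J)$ consists of isomorphisms, every relative $\hotdelta_0(J)$-cell complex is an isomorphism, and $\Ntdelta_0$ of an isomorphism is a weak equivalence; the transfer criterion then yields a cofibrantly generated model structure on $\set$ with generating cofibrations $\hotdelta_0(I)$ and generating acyclic cofibrations $\hotdelta_0(J)$, whose fibrations and weak equivalences are the maps $f$ with $\Ntdelta_0(f)$ a fibration, resp.\ a weak equivalence.

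Finally I would identify this transferred structure with that of \cref{modelstructureSet} by computing $\hotdelta_0$ on the generators $I=\{\partial\Delta[m]\to\Delta[m]\}\cup\{\Delta[m]\to\Delta[m]_t\}$: applying $\hotdelta_0$ gives $\varnothing\to\{0\}$ for $m=0$, gives the fold map $\{0\}\amalg\{0\}\to\{0\}$ for $m=1$, and gives the identity of $\{0\}$ in all remaining cases, so that $\hotdelta_0(I)$ generates the same class as $\{\varnothing\to\{0\},\ \{0\}\amalg\{0\}\to\{0\}\}$, namely all functions; thus the transferred cofibrations are all maps, as in \cref{modelstructureSet}. Since $\hotdelta_0(J)$ consists of isomorphisms, the transferred acyclic cofibrations are exactly the bijections, which are exactly the acyclic cofibrations of \cref{modelstructureSet}. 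As a model structure is determined by its cofibrations and its acyclic cofibrations, the transferred structure is that of \cref{modelstructureSet}, and in particular $\hotdelta_0\dashv\Ntdelta_0$ is a Quillen pair. One can also bypass the abstract transfer theorem and directly check the two defining properties of a right transfer: every $\Ntdelta_0(f)$ is a fibration because each acyclic cofibration induces a bijection on $\hotdelta_0$ by the computation above, which by adjunction supplies the needed lifts; and $\Ntdelta_0(f)$ is a weak equivalence precisely when $f$ is a bijection, because $\hotdelta_0$ then sends every weak equivalence to a bijection (Ken Brown's lemma, all objects of $\psh{t\Delta}$ being cofibrant) and $\hotdelta_0\Ntdelta_0\cong\id$ by \cref{naivecounitiso}.

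I expect the only genuinely non-formal point to be pinning down the generating acyclic cofibrations of $\psh{t\Delta}$ for $(\infty,0)$-categories precisely enough -- via \cite{or} -- to conclude that $\hotdelta_0$ sends them to bijections; once that is in hand, the rest is routine bookkeeping, resting on the single observation that $\hotdelta_0$ computes the set of connected components of the underlying simplicial set and therefore sees nothing of the markings.
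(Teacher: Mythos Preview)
Your proposal is correct but considerably more elaborate than the paper's argument. The paper's proof is a one-liner: it simply observes that in the model structure for $(\infty,0)$-categories a map between discrete $t\Delta$-sets is always a fibration, and is a weak equivalence if and only if it is an isomorphism. Since $\Ntdelta_0$ lands in discrete $t\Delta$-sets, this says precisely that the fibrations and weak equivalences of \cref{modelstructureSet} are those created by $\Ntdelta_0$, which is the defining property of a right transfer. Your route---computing $\hotdelta_0$ as $\pi_0$ of the underlying simplicial set, checking it sends elementary anodynes to bijections, invoking the transfer theorem, and then identifying the resulting model structure with that of \cref{modelstructureSet}---reaches the same conclusion but rebuilds the transferred structure from scratch rather than recognizing it directly. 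The brief direct check you sketch at the end of your proposal is essentially the paper's argument.

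One small point worth tightening: your claim that the generating acyclic cofibrations $J$ are ``obtained from the elementary anodyne extensions by colimits'' is imprecise as stated, and the paper itself notes (in the proof of \cref{existencetransferred}) that an explicit description of $J$ is not available. The clean way to conclude that $\hotdelta_0$ sends all acyclic cofibrations to bijections is to invoke \cite[Prop.~2.4.40]{CisinskiBook}, exactly as the paper does for $n=2$ in \cref{naturaladjunctionQuillen}: you have already verified that $\hotdelta_0$ preserves cofibrations and sends elementary anodyne extensions to weak equivalences, and that is all that criterion requires.
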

 
 \begin{proof}
 The proof is a straightforward exercise, after observing that in the model structure for $(\infty,0)$-categories a map between discrete $t\Delta$-sets is always a fibration, and is a weak equivalence if and only if it is an isomorphisms.
 \end{proof}
 
 This property of the nerve-categorification adjunction highly exploits the degenerate nature of the case $n=0$, and the rest of the section is devoted to the explaining why it fails for higher $n$ and to producing a different nerve-categorification adjunction for which the issues are resolved.
 
  \addtocontents{toc}{\protect\setcounter{tocdepth}{1}}
\subsection{The natural nerve-categorification adjunction for $n=1$}
 
The argument presented above cannot be reproduced at no price even for $n=1$, as there are two evidents obstructions.

\begin{rmk}
The adjunction
   $\hotdelta_1\colon\psh{t\Delta}\leftrightarrows\cat\colon\Ntdelta_1$
   is not a Quillen pair. Indeed, it will be shown in \cref{naturalnervefibrantreplacement} (or it can be checked directly) that $\Ntdelta_1(\cC)$ is fibrant if and only if $\cC$ does not contain non-identity isomorphisms.
\end{rmk}

  \begin{rmk}
  The model structure for $1$-complicial sets on $\psh{t\Delta}$ cannot be transferred to $\cat$ along the nerve $\Ntdelta_1\colon\cat\to\psh{t\Delta}$. If the said transferred model structure existed,
  the unique map $\mathbb I\to[0]$ would be an acyclic cofibration, being the image of the anodyne extension $\eqDelta\to\Delta[3]^\sharp$ via $\hotdelta_1$. Then the unique  map $B\mathbb Z\to[0]$, which can be expressed as a pushout of $\mathbb I\to[0]$ along the map $\mathbb I\to B\mathbb Z$ whose image is the generator of $\mathbb Z$
  would have to be an acyclic cofibration and in particular a weak equivalence in the transferred model structure. This would mean that $\Ntdelta_1(B\mathbb Z)\to\Ntdelta_1([0])=\Delta[0]$ is a weak equivalence, and in particular Kan weak equivalence of underlying simplicial sets, which is not true.
 \end{rmk}

Aiming to prove an analog of \cref{modelstructureonSettransferred} for $n=1$, we modify the nerve-categorification adjunction so that it has better homotopical properties. Inspired by Lurie's natural marking of quasi-categories \cite[Def. 3.1.1.8]{htt}, a way to solve the issues is to change the stratification of the Roberts--Street nerve of a $1$-category $\cC$ by marking the $1$-simplices of $\Ntdelta(\cC)$ witnessed by morphisms of $\cC$ that are isomorphisms, rather than identities. This stratification for the nerve of $1$-categories was first considered in \cite[Prop.3.1.8]{EmilyNotes}, and is referred to as the \emph{$1$-trivial saturated stratification}.
To implement this idea, recall that $\mathbb I$ denotes the free isomorphism.

  \begin{const}
 Consider the co-$t\Delta$-object in $\cat$
 given on objects by
 $$[m]\mapsto\mathcal [m]\quad\text{ and }\quad[m]_t\mapsto\left\{
 \begin{array}{cc}
{\mathbb I}&m=1\\
{ [m]}&m\ge2
 \end{array}
 \right.$$
  with the usual cosimplicial structure, and with the further structure map given by the canonical inclusion $[1]\to \mathbb{I}$.
It induces an adjunction
 $$\honat_1\colon\psh{t\Delta}\rightleftarrows\cat\colon\Nnat_1.$$
  We call $\Nnat_1$ the \emph{natural $1$-nerve} and $\honat_1$ the \emph{natural $1$-categorification}.
 \end{const}
 
  In particular, $\Nnat_1(\cC)$ is a stratified simplicial set whose underlying simplicial set is the ordinary nerve $N(\cC)$, the marked $1$-simplices are those corresponding to isomorphisms and all $m$-simplices are marked for $m>1$.
 
This adjunction has the desired homotopical properties.
   \begin{thm}
   \label{modelstructuretransferred1}
    The canonical model structure on $\cat$ is right-transferred from the Riehl--Verity model structure on $\psh{t\Delta}$ for $(\infty,1)$-categories along the natural nerve $\Nnat_1\colon\cat\to\psh{t\Delta}$.
    In particular, the adjunction
   $$\honat_1\colon\psh{t\Delta}_{(\infty,1)}\leftrightarrows\cat\colon\Nnat_1$$
   is a Quillen pair.
 \end{thm}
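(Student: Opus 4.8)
The plan is to check directly that a functor is a weak equivalence, respectively a fibration, in the canonical model structure on $\cat$ exactly when the natural nerve sends it to a weak equivalence, respectively a fibration, in the model structure on $\psh{t\Delta}$ for $(\infty,1)$-categories. This is precisely the assertion that the canonical model structure is the right transfer along $\Nnat_1$; once it is established, the cofibrant generation of the transfer and the Quillen pair statement follow formally, using in addition that $\honat_1$ carries the generating cofibrations and acyclic cofibrations of $\psh{t\Delta}$ to generators of the corresponding classes in $\cat$, as will be apparent from the computations below. There are no smallness issues since $\cat$ is locally presentable. I would also record at the outset, by a direct verification of the four elementary anodyne extensions of \cref{anodynemaps}, that $\Nnat_1(\cC)$ is an $(\infty,1)$-category for every small category $\cC$: the inner and outer complicial horns admit unique fillers because the underlying simplicial set is the nerve of a category and a $1$-simplex of $\Nnat_1(\cC)$ is marked precisely when the corresponding morphism is an isomorphism; the thinness extensions hold because a composite of isomorphisms is an isomorphism; the triviality extensions hold because every simplex of dimension above $1$ is marked; and the saturation extensions hold by the two-out-of-six property for isomorphisms in a category.

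For the comparison of fibrations, suppose first that $\Nnat_1(f)$ is a fibration. The map $\Delta[0]\to\Delta[1]_t$ is an acyclic cofibration, its target being contractible in $\psh{t\Delta}_{(\infty,1)}$; hence $\Nnat_1(f)$ has the right lifting property against it, and transposing across the adjunction $\honat_1\dashv\Nnat_1$ and identifying $\honat_1(\Delta[0]\to\Delta[1]_t)$ with $[0]\hookrightarrow\mathbb{I}$ shows that $f$ is an isofibration. Conversely, suppose that $f$ is an isofibration. To see that $\Nnat_1(f)$ is a fibration I must produce lifts against every acyclic cofibration of $\psh{t\Delta}_{(\infty,1)}$, which by transposition reduces to $f$ lifting against the $\honat_1$-images of such maps; since an isofibration lifts against every acyclic cofibration of $\cat$, it suffices to check that $\honat_1$ carries a set of generating acyclic cofibrations to acyclic cofibrations of $\cat$. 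On the elementary anodyne extensions this is a direct computation — one finds that $\honat_1$ sends each of them to an isomorphism of categories, the saturation extensions once more by two-out-of-six — and the remaining generators are handled by the fact that $\honat_1$ preserves cofibrations, which holds because it sends each generating cofibration $\partial\Delta[m]\to\Delta[m]$ and $\Delta[m]\to\Delta[m]_t$ to a functor that is injective on objects.

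For the comparison of weak equivalences, one inclusion is elementary: if $\Nnat_1(f)$ is a weak equivalence then, since the counit $\honat_1\Nnat_1\cong\id$ is an isomorphism (just as for $\Ntdelta_1$ in \cref{naivecounitiso}) and since $\honat_1$ carries weak equivalences to equivalences of categories — it preserves cofibrations, hence acyclic cofibrations by the previous paragraph, and it carries the complicial notion of homotopy to natural isomorphism of functors, so a weak equivalence factored through these is sent to an equivalence — we conclude that $f=\honat_1\Nnat_1(f)$ is an equivalence of categories. The reverse inclusion, that an equivalence of categories $f$ has $\Nnat_1(f)$ a weak equivalence, is the heart of the matter: I would use that $\Nnat_1(\cC)$ is fibrant and that $\Nnat_1$ preserves the cotensor by the free isomorphism $\mathbb{I}$, whose nerve $\Nnat_1(\mathbb{I})$ is complicially contractible, so that a natural isomorphism becomes a complicial homotopy and a quasi-inverse to $f$ becomes a homotopy inverse to $\Nnat_1(f)$. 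Establishing that $\Nnat_1(\mathbb{I})$ is contractible and that the induced homotopy relation detects weak equivalences between fibrant objects — equivalently, that the homotopy theory of $\psh{t\Delta}_{(\infty,1)}$ restricted to nerves of categories agrees with the categorical one — is what I expect to be the main obstacle, and is where one invokes the comparison with quasi-categories developed around \cite{or} and \cite{EmilyNotes}. Combining the two comparisons identifies the canonical model structure on $\cat$ with the right transfer of $\psh{t\Delta}_{(\infty,1)}$ along $\Nnat_1$, whence the Quillen pair $\honat_1\dashv\Nnat_1$.
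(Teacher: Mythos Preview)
Your direct comparison of fibrations and weak equivalences is a different route from the paper's, and it has two genuine gaps.

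For the fibration comparison, you want to show that if $f$ is an isofibration then $\Nnat_1(f)$ lifts against every acyclic cofibration of $\psh{t\Delta}_{(\infty,1)}$. You reduce this to checking that $\honat_1$ carries a set of generating acyclic cofibrations to acyclic cofibrations in $\cat$, but no explicit generating set of acyclic cofibrations for this model structure is available (the paper says as much in the proof of \cref{existencetransferred}). The elementary anodyne extensions are not that set, and your sentence ``the remaining generators are handled by the fact that $\honat_1$ preserves cofibrations'' does not make sense: preserving cofibrations does not yield preservation of acyclic cofibrations. What actually works---and is what the paper uses for $n=2$ in \cref{naturaladjunctionQuillen}---is Cisinski's criterion \cite[Prop.~2.4.40]{CisinskiBook}: once $\honat_1$ preserves cofibrations and sends elementary anodyne extensions to weak equivalences, it is automatically left Quillen, without ever naming the generating acyclic cofibrations. (Incidentally, $\honat_1$ does not send every elementary anodyne extension to an isomorphism: $\Lambda^k[1]\to\Delta^k[1]$ goes to $[0]\hookrightarrow\mathbb I$, an acyclic cofibration that is not an isomorphism.)

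For the weak equivalence comparison, you yourself flag the hard direction---that an equivalence of categories has $\Nnat_1$-image a weak equivalence---as ``the main obstacle'' and leave it to an appeal to contractibility of $\Nnat_1(\mathbb I)$ and an unspecified comparison with quasi-categories. This is not a proof.

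The paper bypasses both problems. It does not prove the theorem directly; it says one may deduce it from the $n=2$ analogue \cref{Lackistransferred}, whose proof in Section~6 proceeds by (i) showing the adjunction is Quillen via Cisinski's criterion, (ii) invoking the transfer theorem to produce a transferred model structure on $2\cat$, and then (iii) identifying the transferred structure with Lack's by comparing \emph{cofibrations and fibrant objects} rather than fibrations and weak equivalences, using Joyal's result \cite[Prop.~E.1.10]{JoyalVolumeII} that these determine a model structure. For $n=1$ the same template works and is substantially easier than your direct comparison: the cofibrations visibly agree (both are the functors injective on objects), and all objects are fibrant on both sides, so there is nothing further to check.
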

 
 This proposition can be checked directly using the formalism of Lurie's marked simplicial sets from \cite{htt}, or it can be deduced from the higher analog of the same result, which we will show as \cref{Lackistransferred}.

The natural nerve for $n=1$ is still fully faithful.
 
 \begin{prop}
 \label{counitiso1}
For any category $\cC$ the (derived) counit
    \[
    \begin{tikzcd}
     \epsilon^{\natural}_{\cC}\colon\honat_1(\Nnat_1(\cC))\arrow[r, two heads, "\cong"]&\cC
    \end{tikzcd}
    \]
    is an isomorphism of categories.
 \end{prop}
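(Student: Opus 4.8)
The plan is to exhibit the counit $\epsilon^{\natural}_{\cC}\colon\honat_1(\Nnat_1(\cC))\to\cC$ directly and check it is invertible. First I would unwind the definitions. By construction $\Nnat_1(\cC)$ is the stratified simplicial set whose underlying simplicial set is the ordinary nerve $N(\cC)$, with exactly the isomorphisms marked in dimension $1$ and everything marked above dimension $1$. The left adjoint $\honat_1$ is the colimit-preserving functor determined by the co-$t\Delta$-object $[m]\mapsto[m]$, $[1]_t\mapsto\mathbb I$, $[m]_t\mapsto[m]$ for $m\ge2$; so $\honat_1(\Nnat_1(\cC))$ is computed as a coend (a canonical colimit over the category of simplices of $\Nnat_1(\cC)$ of the corresponding truncated orientals and copies of $\mathbb I$). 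I would then observe that $\honat_1$ factors through the Roberts--Street categorification: there is a canonical comparison $\hotdelta_1(\Nnat_1(\cC))\to\honat_1(\Nnat_1(\cC))$ induced by the map of co-$t\Delta$-objects given by $[1]\hookrightarrow\mathbb I$, and dually a natural map $\Ntdelta_1(\cC)\to\Nnat_1(\cC)$ which forgets markings (both underlying simplicial sets are $N(\cC)$, and every marked simplex of $\Ntdelta_1(\cC)$, being degenerate or coming from an identity, is in particular an isomorphism, hence marked in $\Nnat_1(\cC)$).

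**Key steps.** (1) Note that the only difference between $\Nnat_1(\cC)$ and $\Ntdelta_1(\cC)$ is the additional marking of $1$-simplices witnessing non-identity isomorphisms of $\cC$. Hence $\honat_1(\Nnat_1(\cC))$ is obtained from $\hotdelta_1(\Ntdelta_1(\cC))\cong\cC$ (by \cref{naivecounitiso}) by forming, for each isomorphism $\sigma\colon x\xrightarrow{\cong}y$ of $\cC$ that is newly marked, the pushout along $[1]\hookrightarrow\mathbb I$ classifying $\sigma$; i.e. $\honat_1(\Nnat_1(\cC))$ is the colimit of a diagram that sends the generating $1$-simplex of each such $\sigma$ to the free isomorphism $\mathbb I$ rather than to $[1]$. (2) Since $\sigma$ is \emph{already} an isomorphism in $\cC$, the classifying functor $[1]\to\cC$ picking out $\sigma$ extends uniquely to $\mathbb I\to\cC$; therefore each of these pushouts is absorbed and the universal map out of the colimit into $\cC$ is an isomorphism. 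Concretely: $[1]\to\mathbb I$ is an epimorphism in $\cat$ that becomes an isomorphism after applying $\Hom_{\cat}(-,\cC)$ to any $\sigma$ landing in the isomorphisms, so each pushout square is a pushout of an isomorphism, hence an isomorphism. Assembling, the induced comparison $\hotdelta_1(\Ntdelta_1(\cC))\to\honat_1(\Nnat_1(\cC))$ is an isomorphism, and composing with $\epsilon^{RS}_{\cC}$ from \cref{naivecounitiso} identifies $\epsilon^{\natural}_{\cC}$ as an isomorphism. (3) For the ``derived'' parenthetical: since $\Nnat_1(\cC)$ is cofibrant (every $t\Delta$-set is, as cofibrations are monomorphisms) the derived counit agrees with the underived one, so no further argument is needed; alternatively one remarks $\Nnat_1(\cC)$ is already fibrant by \cref{modelstructuretransferred1}, or invokes that $\honat_1$ preserves the relevant object on the nose.

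**Main obstacle.** The one point requiring genuine care is step (2): showing that enlarging the marking --- passing from $[1]$ to $\mathbb I$ in the co-$t\Delta$-object --- really does not change the categorification when the relevant $1$-simplex is already an isomorphism. This comes down to the fact that $[1]\to\mathbb I$ is an epimorphism of categories and, more precisely, that for a functor $F\colon[1]\to\cC$ hitting an isomorphism there is a \emph{unique} extension to $\mathbb I$; the uniqueness is what makes the colimit defining $\honat_1(\Nnat_1(\cC))$ collapse onto $\cC$. This is exactly the defining universal property of $\mathbb I$ (``functors $\mathbb I\to\cC$ classify isomorphisms''), so the obstacle is really bookkeeping: one must track the diagram of simplices of $\Nnat_1(\cC)$ carefully, identify which generating cells get the enlarged target, and verify commutativity of all the face/degeneracy/marking maps under the comparison. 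I expect this to be a short but slightly delicate diagram chase, cleanly organized by factoring everything through the isomorphism $\epsilon^{RS}_{\cC}$ of \cref{naivecounitiso}; no hard analysis is involved.
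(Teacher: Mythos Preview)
Your approach is correct and coincides with the second of the two strategies the paper sketches: compute $\honat_1(\Nnat_1(\cC))$ as $\cC$ together with a freely adjoined inverse for each isomorphism, then observe that uniqueness of inverses forces each such pushout along $[1]\hookrightarrow\mathbb I$ to be trivial. One small clean-up: the intermediate comparison you want is $\honat_1(\Ntdelta_1(\cC))\cong\cC$ (all marked $1$-simplices in $\Ntdelta_1(\cC)$ are degenerate, so the copies of $\mathbb I$ collapse) rather than a map of co-$t\Delta$-objects from the Roberts--Street to the natural one, since at $[1]_t$ the former has value $[0]$, not $[1]$; with that fix your pushout description follows immediately from $\honat_1$ preserving the pushouts $\Delta[1]\to\Delta[1]_t$ that build $\Nnat_1(\cC)$ from $\Ntdelta_1(\cC)$.
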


 \begin{proof}
We give two strategies to prove the proposition, and leave the details to the interested reader.
\begin{enumerate}[leftmargin=*]
    \item The counit can be seen to be an acyclic fibration, by showing that it has the right lifting property with respect to all generating cofibrations, so it is in particular an equivalence of categories. Moreover it is bijective on objects, and any equivalence that is bijective on objects is an isomorphism.
    \item By definition, the category $\honat_1(\Nnat_1(\cC))$ is obtained by glueing a copy of $[n]$ for any functor $[n]\to\cC$, and a copy of $\mathbb I$ for any invertible morphism $[1]\to\cC$. This means that the construction builds first $\cC$, and then glues an inverse to any morphism $f$ of $\cC$ that is already invertible. In particular, the newly added inverse of $f$ must agree with the already present inverse $f^{-1}$. In particular, $\honat_1(\Nnat_1(\cC))$ is isomorphic to $\cC$ and the counit realizes this isomorphism.
    \qedhere
\end{enumerate}
 \end{proof}

  \addtocontents{toc}{\protect\setcounter{tocdepth}{1}}
\subsection{The natural nerve-categorification adjunction for $n=2$}

We now proceed to studying the case of $n=2$. Inspired by the same philosophy, we want to change the stratification of the Roberts--Street nerve of a $2$-category $\cC$ by marking the $1$-simplices of $\Ntdelta_2(\cC)$ witnessed by $1$-morphisms of $\cC$ that are equivalences, rather than identities or isomorphisms, and by marking the $2$-simplices of $\Ntdelta_2(\cC)$ witnessed by $2$-morphisms of $\cC$ that are isomorphisms, rather than identities.

We implement this idea, by considering a multiply marked variant of this stratification for the nerve of $2$-categories, closely related to Riehl's  \emph{$2$-trivial saturated stratification} from \cite[Prop.3.1.10.]{EmilyNotes}.

Recall that $\mathbb E$ denotes the free adjoint equivalence, and that $\Sigma\mathbb I$ denote the suspension of the free isomorphism, which coincides with the free $2$-isomorphism.

\begin{notn}
 We denote by $\mathbb O_2[2]$ the $2$-category obtained from $\mathcal{O}[2]$ by adding a strict inverse to the top $2$-morphism; it can be written as a pushout
 $$\mathbb O[2]\cong\mathcal O_2[2]\aamalg{\Sigma[1]}\Sigma{\mathbb I},$$
 and can be pictured as follows
     $$\begin{tikzcd}[baseline=(current  bounding  box.center)]
 & 1 \arrow[rd, ""]  & \\
    0 \arrow[ru, ""]
  \arrow[rr, ""{below}, ""{name=D,inner sep=1pt}]
  && 2.
  \arrow[Rightarrow, from=D, 
 to=1-2, shorten >= 0.1cm, shorten <= 0.1cm, "\cong"]
\end{tikzcd}$$
 \end{notn}

  \begin{const}
 Consider the co-$t\Delta$-object in $2\cat$
 given by
 $$[m]\mapsto\mathcal O_2[m]\quad\text{ and }\quad[m]_t\mapsto\left\{
 \begin{array}{clc}
\mathbb E&m=1\\
\mathbb O_2[2]&m=2\\
 \mathcal O_2[m]&m\ge3
 \end{array}
 \right.$$
  with the usual cosimplicial structure, and with the further structure maps given by the canonical inclusions $[1]\to \mathbb{E}$ and $\cO_2[2] \to\mathbb{O}_2[2]$.
It induces an adjunction
 $$\honat_2\colon\psh{t\Delta}\rightleftarrows2\cat\colon\Nnat_2.$$
 We call $\Nnat_2$ the \emph{natural $2$-nerve} and $\honat_2$ the natural \emph{natural $2$-categorification}.
 \end{const}
  
  In particular, $\Nnat_2(\cC)$ is a $t\Delta$-set whose underlying simplicial set is the Street--Duskin nerve $\Nstreet_2(\cC)$, the marked $1$-simplices are those corresponding to $1$-equivalences in $\cC$ (each marked possibly multiple times), the marked $2$-simplices are those corresponding to $2$-isomorphisms of $\cC$ (each marked uniquely), and all $m$-simplices are marked for $m>2$ (each marked uniquely). More precisely, the $1$-simplex corresponding to a $1$-equivalence $f$ in $\cC$ is marked as many times as there are ways to complete $f$ to an adjoint equivalence $(f,g,\eta,\epsilon)$.

 What we have lost is fully faithfulness of the nerve construction on the nose, as shown by the following remark.
 
 \begin{rmk}
 Using the detailed description of $\honat_2(\Nnat_2(\cC))$ for any $2$-category $\cC$ given in \cref{prooflemma}, one can see that $\honat_2(\Nnat_2(\mathbb E))$ is a much larger $2$-category than $\mathbb E$; for instance, while the underlying $1$-category of $\mathbb E$ is generated by $f$ and $g$, with a careful analysis of $\honat_2(\Nnat_2(\mathbb E))$ one discovers that the underlying $1$-category of $\honat_2(\Nnat_2(\mathbb E))$ is freely generated by an infinite class of morphisms. In particular the counit
     \[
    \begin{tikzcd}
     \epsilon^{\natural}_{\mathbb E}\colon\honat_2(\Nnat_2(\mathbb E))\arrow[r, two heads, "\not\cong"]&\mathbb E
    \end{tikzcd}
    \]
    cannot be an isomorphism.
 \end{rmk}

 However, the following theorem guarantees that for $n=2$ the counit is an isomorphism at the level of homotopy categories, which is equivalent to saying that the nerve is fully faithful at the level of homotopy categories.

 \begin{thm}
 \label{counitweakequivalence}
For any $2$-category $\cC$ the (derived) counit
    \[
    \begin{tikzcd}
     \epsilon^{\natural}_{\cC}\colon\honat_2(\Nnat_2(\cC))\arrow[r, two heads, "\simeq"]&\cC
    \end{tikzcd}
    \]
     is an acyclic fibration in the Lack model structure, and in particular a biequivalence.
 \end{thm}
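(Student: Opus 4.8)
The plan is to prove directly that the counit $\epsilon^{\natural}_{\cC}$ is an acyclic fibration in Lack's model structure; the final claim then follows at once, since acyclic fibrations are weak equivalences. First note that in the model structure on $\psh{t\Delta}$ every object is cofibrant (the cofibrations being the monomorphisms), so no cofibrant replacement is needed and the derived counit agrees with the ordinary counit $\epsilon^{\natural}_{\cC}\colon\honat_2(\Nnat_2(\cC))\to\cC$. By \cref{modelstructure2cat} it thus suffices to solve all lifting problems against the four generating cofibrations $[-1]\hookrightarrow[0]$, $[0]\amalg[0]\hookrightarrow[1]$, $\Sigma([0]\amalg[0])\hookrightarrow\Sigma[1]$ and $\Sigma[\rightrightarrows]\to\Sigma[1]$. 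Unwinding these lifting problems, and writing $\cK:=\honat_2(\Nnat_2(\cC))$, the counit is an acyclic fibration if and only if it is surjective on objects (lifting against $[-1]\hookrightarrow[0]$) and, for all objects $x,y$ of $\cK$, the induced functor on hom-categories $\cK(x,y)\to\cC(\epsilon x,\epsilon y)$ is surjective on objects (against $[0]\amalg[0]\hookrightarrow[1]$), full (against $\Sigma([0]\amalg[0])\hookrightarrow\Sigma[1]$) and faithful (against $\Sigma[\rightrightarrows]\to\Sigma[1]$); equivalently, $\epsilon^{\natural}_{\cC}$ is surjective on objects and homwise a trivial fibration of categories.

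Two of these conditions are read off immediately from the explicit presentation of $\cK$ obtained in \cref{prooflemma}. Its set of objects is the set of objects of $\cC$, and $\epsilon^{\natural}_{\cC}$ is the identity map on objects, in particular surjective on objects; moreover, each $1$-morphism $a\colon x\to y$ of $\cC$ is the image under $\epsilon^{\natural}_{\cC}$ of the generating $1$-cell that the construction attaches to the corresponding $1$-simplex of $\Nnat_2(\cC)$, so $\cK(x,y)\to\cC(x,y)$ is surjective on objects.

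It remains to show that $\cK(x,y)\to\cC(x,y)$ is fully faithful, which is the technical heart of the proof. Here I would use the presentation of \cref{prooflemma} to produce, for every $1$-cell $u\colon x\to y$ of $\cK$ --- a priori a formal composite of generating $1$-cells, some of which may be the inverses formally adjoined at the marked $1$-simplices --- a \emph{canonical invertible $2$-cell} to a genuine $1$-morphism of $\cC$: consecutive generators are merged by the invertible $2$-cell coming from a degenerate $2$-simplex of $\Nnat_2(\cC)$ that records their composite in $\cC$, and each formally adjoined inverse of an equivalence $f$ is identified with the corresponding right adjoint $g$ through the unit and counit of the free adjoint equivalence $\mathbb{E}$ glued in at that marked $1$-simplex. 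These canonical $2$-cells are compatible with composition and are sent to identities by $\epsilon^{\natural}_{\cC}$. Conjugating an arbitrary $2$-cell $\varphi\colon u\Rightarrow v$ of $\cK$ by the canonical $2$-cells of its source and target rewrites it as a $2$-cell between single $1$-morphisms of $\cC$; by the presentation, together with the coherence relations issuing from the $3$-simplices and the relations defining $\mathbb{E}$ and $\Sigma\mathbb{I}$, such a $2$-cell is precisely a $2$-morphism of $\cC$ with the prescribed boundary. This simultaneously gives fullness and faithfulness, and hence the acyclic fibration.

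I expect the last paragraph to be the real obstacle. One has to push the normal-form analysis of $\cK=\honat_2(\Nnat_2(\cC))$ from \cref{prooflemma} far enough to be confident that the coherence relations from the $3$-simplices, the triangle relations of the adjoined adjoint equivalences, and the presence of possibly many markings on each $1$-equivalence together leave each hom-category of $\cK$ equivalent, via $\epsilon^{\natural}_{\cC}$, to the corresponding hom-category of $\cC$: namely, that the (in general infinitely many) formally adjoined inverse $1$-cells and the extra $2$-cells carried by the copies of $\mathbb{E}$ do not enlarge these hom-categories up to equivalence (fullness, together with the essential surjectivity already observed), and that no two distinct parallel $2$-morphisms of $\cC$ become identified in $\cK$ (faithfulness).
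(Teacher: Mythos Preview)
Your strategy matches the paper's: reduce to lifting against the four generating cofibrations of Lack's model structure, dispatch the first two using that $\epsilon^{\natural}_{\cC}$ is bijective on objects and surjective on $1$-cells (both read off from the presentation of $\cK$ in \cref{prooflemma}), and reduce the last two to full faithfulness on hom-categories. The paper packages this last step as \cref{localequivalence}, asserting that each $\epsilon^{\natural}_{\cC}(x,y)$ is an \emph{equivalence} of categories, and then quotes that an equivalence is fully faithful.

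The gap you correctly flag in your final paragraph is real, and the paper does not close it by a normal-form argument. Your assertion that after conjugating by the canonical invertible $2$-cells ``such a $2$-cell is precisely a $2$-morphism of $\cC$'' is exactly the content to be proven, and the presentation alone does not hand it to you. Instead the paper defines an explicit section $F_{x,y}\colon\cC(x,y)\to\cK(x,y)$ with $\epsilon\circ F=\id$, takes your canonical $2$-cells as the components of a transformation $\Psi\colon\id\Rightarrow F\circ\epsilon$, and then verifies \emph{naturality of $\Psi$} on each type of generating $2$-cell of $\cK$ (the $\varphi_{d_1,c,d_2}$, the $\widetilde\eta_{(f,g,\eta,\epsilon)}$, the $\widetilde\epsilon_{(f,g,\eta,\epsilon)}$, and their inverses). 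Naturality gives $\Psi_v\circ\varphi\circ\Psi_u^{-1}=F(\epsilon(\varphi))$, so the conjugate is determined by its image in $\cC$ and full faithfulness follows formally. What you call ``compatible with composition'' is the relation the paper records as (\ref{psiandbeta}), which is used to reduce the naturality check to generators; the substantive work is then the naturality squares for $\widetilde\eta$ and especially $\widetilde\epsilon$, where the triangle identities in $\cC$ and the $3$-simplex relations are actually invoked. Structuring the argument this way avoids having to control normal forms of $2$-cells in $\cK$ directly.
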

 
 The proof makes use of the following lemma, which will be proven in \cref{prooflemma}.
 
 \begin{lem}
 \label{localequivalence}
 For any $2$-category $\cC$ the (derived) counit is a local equivalence of categories, i.e., for any objects $x$ and $y$ of $\cC$ it induces an equivalence of categories
 \[
   \epsilon^{\natural}_{\cC}(x,y) \colon \Map_{\honat_2(\Nnat_2(\cC))}(x,y)\to \Map_{\cC}(x,y).
   \]
 \end{lem}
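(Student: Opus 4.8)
The plan is to deduce the statement from an explicit presentation of the $2$-category $\honat_2(\Nnat_2(\cC))$ by generators and relations. Since every object of $\pstrat$ is cofibrant, the derived counit coincides on the nose with the counit $\epsilon^{\natural}_{\cC}\colon\honat_2(\Nnat_2(\cC))\to\cC$, so we may work with the latter. As $\honat_2$ preserves colimits, $\honat_2(\Nnat_2(\cC))$ is the colimit over the category of elements of $\Nnat_2(\cC)$ of the values of the defining co-$t\Delta$-object; computing this colimit, using that $\Nstreet_2(\cC)$ is $3$-coskeletal and that the pieces attached to degenerate simplices collapse (so that identity $1$-morphisms of $\cC$ are read as identity $1$-cells), one identifies $\honat_2(\Nnat_2(\cC))$ with the $2$-category whose objects are the objects of $\cC$, which is generated by
\begin{itemize}[leftmargin=*]
\item a $1$-cell $\tilde a\colon x\to y$ for every non-identity $1$-morphism $a$ of $\cC$, together with a formal inverse $\tilde g_{(c)}\colon y\to x$ for each equivalence $f\colon x\to y$ of $\cC$ and each choice $c=(f,g,\eta,\epsilon)$ of a completion of $f$ to an adjoint equivalence;
\item a $2$-cell $\tilde\alpha\colon\tilde c\Rightarrow\tilde b\tilde a$ for every $2$-morphism $\alpha\colon c\Rightarrow ba$ of $\cC$, a formal inverse $\tilde\alpha^{-1}$ whenever moreover $\alpha$ is invertible, and $2$-cells $\tilde\eta_{(c)},\tilde\epsilon_{(c)}$ for each choice $c$;
\end{itemize}
subject to the pasting relation carried by $\mathcal{O}_2[3]$ for each $3$-simplex of $\Nstreet_2(\cC)$, the defining relations of the free adjoint equivalence $\mathbb E$ for each $c$, and those of $\mathbb{O}_2[2]$ for each marked $2$-simplex. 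Under this identification $\epsilon^{\natural}_{\cC}$ sends $\tilde a\mapsto a$, $\tilde g_{(c)}\mapsto g$, $\tilde\alpha\mapsto\alpha$, $\tilde\eta_{(c)}\mapsto\eta$, $\tilde\epsilon_{(c)}\mapsto\epsilon$, and in particular it is the identity on objects.

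Hence it suffices to show that for all objects $x,y$ the functor $\epsilon^{\natural}_{\cC}(x,y)$ is an equivalence of categories, and since it is surjective on objects ($\tilde f\mapsto f$) it is enough to exhibit a section $s_{x,y}$ for which $s_{x,y}\circ\epsilon^{\natural}_{\cC}(x,y)$ is naturally isomorphic to the identity. I would define $s_{x,y}\colon\Map_{\cC}(x,y)\to\Map_{\honat_2(\Nnat_2(\cC))}(x,y)$ by $f\mapsto\tilde f$ on objects and, on a $2$-morphism $\beta\colon f\Rightarrow f'$, by the $2$-cell $\tilde\sigma_\beta\colon\tilde f\Rightarrow\tilde{f'}$ produced by gluing $\mathcal{O}_2[2]$ along the $2$-simplex of $\Nstreet_2(\cC)$ with edges $f'$, $\id_y$, $f$ and filler $\beta$. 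Using that the $2$-simplex attached to $\id_f$ is degenerate and that there is a $3$-simplex whose faces realize vertical composition in $\cC$, one checks that $s_{x,y}$ is a functor and that $\epsilon^{\natural}_{\cC}(x,y)\circ s_{x,y}=\id$.

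It then remains to produce a natural isomorphism $\id\cong s_{x,y}\circ\epsilon^{\natural}_{\cC}(x,y)$, that is, for every $1$-cell $w$ of $\honat_2(\Nnat_2(\cC))$ a $2$-isomorphism $\theta_w\colon w\Rightarrow\widetilde{\epsilon^{\natural}_{\cC}(w)}$ natural in $w$. One sets $\theta_{\tilde a}=\id$, takes $\theta_{\tilde g_{(c)}}\colon\tilde g_{(c)}\Rightarrow\tilde g$ to be the $2$-isomorphism assembled from $\tilde\eta_{(c)},\tilde\epsilon_{(c)}$ and the invertibility in $\cC$ of the unit and counit of $c$, and extends $\theta$ to arbitrary words by repeatedly inserting the $2$-isomorphisms $\tilde b\tilde a\cong\widetilde{ba}$ carried by the marked commuting-triangle $2$-simplices. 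The step I expect to be the main obstacle is the proof that $\theta$ is natural, equivalently that every $2$-cell of $\honat_2(\Nnat_2(\cC))$, after conjugation by the appropriate $\theta$'s, equals one of the distinguished cells $\tilde\sigma_\beta$. This is a normal-form (confluence) argument: one verifies that the distinguished cells are closed under vertical and horizontal composition and that, up to conjugation, they include every generating $2$-cell. For $\tilde\alpha\colon\tilde c\Rightarrow\tilde b\tilde a$ the needed identity comes from the pasting relation of the $3$-simplex of $\Nstreet_2(\cC)$ whose four faces are the original $2$-simplex $(a,b,c,\alpha)$, the $2$-simplex $(ba,\id,c,\alpha)$, a commuting triangle and a degenerate $2$-simplex; for $\tilde\eta_{(c)},\tilde\epsilon_{(c)}$ and for the inverses $\tilde\alpha^{-1}$ one combines further such $3$-simplices with the defining relations of $\mathbb E$ and $\mathbb{O}_2[2]$ and the characterization of adjoint equivalences recalled above. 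The bookkeeping here is delicate, chiefly because the formal inverses $\tilde g_{(c)}$ are honestly new $1$-cells, isomorphic to the $1$-cells coming from $\cC$ only through non-identity $2$-cells. Granting the naturality of $\theta$, the functors $\epsilon^{\natural}_{\cC}(x,y)$ and $s_{x,y}$ are mutually inverse equivalences, which is the claim. A more homotopical route — writing $\Nnat_2(\cC)$ as a transfinite composite of pushouts of elementary anodyne extensions of $\Ntdelta_2(\cC)$ and applying $\honat_2$ — would require knowing that $\honat_2$ is left Quillen and appears to be circular with Theorem~A, so we favour the direct argument above.
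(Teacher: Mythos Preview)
Your proposal is correct and follows essentially the same strategy as the paper's proof: an explicit generators-and-relations description of $\honat_2(\Nnat_2(\cC))$, a section $s_{x,y}$ of $\epsilon^{\natural}_{\cC}(x,y)$ built from the degenerate-edge $2$-simplices (the paper uses $\id_x$ on the source side rather than your $\id_y$, but either works), and an inductively defined natural isomorphism $\theta$ whose naturality is verified on the generating $2$-cells via carefully chosen $3$-simplices of $\Nstreet_2(\cC)$ together with the triangle identities. The paper carries out precisely the computations you flag as the main obstacle, giving the explicit $3$-simplex for $\varphi_{d_1,c,d_2}$ and explicit pasting diagrams for $\widetilde\eta_{(f,g,\eta,\epsilon)}$ and $\widetilde\epsilon_{(f,g,\eta,\epsilon)}$; your sketch of these verifications is accurate, so nothing substantive is missing from your plan.
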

 
 We can now prove the theorem.
 
 \begin{proof}[Proof of \cref{counitweakequivalence}]
 We show that the counit $\epsilon^{\natural}_{\cC}$ has the right lifting property with respect to all generating cofibrations.
 \begin{itemize}[leftmargin=*]
     \item The fact that the counit lifts against $[-1]\hookrightarrow[0]$ is a consequence of the fact that the counit is bijective on objects.
     \item The fact that the counit lifts against $[0]\amalg[0]\hookrightarrow[1]$ is a consequence of the fact that the counit is surjective on $1$-morphisms and bijective on objects.
    \item The fact that the counit $\epsilon^\natural_{\cC}$ lifts against $\Sigma([0]\amalg[0])\hookrightarrow \Sigma{[1]}$ and $\Sigma[\rightrightarrows]\to\Sigma{[1]}$ is a consequence of the fact that
for every object $x$ and $y$ of $\cC$ the map induced by the counit on the hom-category $\epsilon^{\natural}_{\cC}(x,y)$ lifts against $[0]\amalg[0]\hookrightarrow[1]$ and $[\rightrightarrows]\to[1]$, which is true since $\epsilon^{\natural}_{\cC}(x,y)$ is fully faithful by \cref{localequivalence}.\qedhere
 \end{itemize}
 \end{proof}

  Motivated by \cref{modelstructureonSettransferred,modelstructuretransferred1}, we will show the following theorem in \cref{appendix1}.
 
  \begin{thm}
  \label{Lackistransferred}
    Lack's model structure on $2\cat$ is right-transferred from the Riehl--Verity model structure on $\psh{t\Delta}$ for $2$-complicial sets along the natural nerve $\Nnat_2\colon2\cat\to\psh{t\Delta}$.     In particular, the adjunction
   $$\honat_2\colon\psh{t\Delta}_{(\infty,2)}\leftrightarrows2\cat\colon\Nnat_2$$
   is a Quillen pair.
 \end{thm}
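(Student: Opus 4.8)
The plan is to prove the theorem by verifying directly that Lack's model structure has the defining property of the right-transferred one: that a $2$-functor $f$ is a biequivalence (resp.\ a fibration) in $2\cat$ exactly when $\Nnat_2(f)$ is a weak equivalence (resp.\ a fibration) in $\psh{t\Delta}_{(\infty,2)}$. Since the right-transferred model structure is unique whenever it exists, this simultaneously produces it and identifies it with Lack's, and the ``in particular'' then follows. The argument rests on two inputs: first, that $\honat_2\colon\psh{t\Delta}_{(\infty,2)}\to 2\cat$ is a \emph{left Quillen functor}, which I would establish first; and second, \cref{counitweakequivalence}, which says that the counit $\epsilon_{\cC}$ is a biequivalence for every $2$-category $\cC$.

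To prove that $\honat_2$ is left Quillen I would use that $\psh{t\Delta}_{(\infty,2)}$ is cofibrantly generated with the monomorphisms as cofibrations, so it suffices to check that $\honat_2$ sends the generating cofibrations $\partial\Delta[m]\to\Delta[m]$ and $\Delta[m]\to\Delta[m]_t$ to cofibrations in $2\cat$, and the generating trivial cofibrations --- which are built from the elementary anodyne extensions of \cref{anodynemaps} --- to trivial cofibrations. The first is a finite inspection: $\honat_2$ carries $\partial\Delta[m]\to\Delta[m]$ to the evident map $\honat_2(\partial\Delta[m])\to\mathcal O_2[m]$ and $\Delta[m]\to\Delta[m]_t$ to $[1]\to\mathbb E$, to $\mathcal O_2[2]\to\mathbb O_2[2]\cong\mathcal O_2[2]\amalg_{\Sigma[1]}\Sigma\mathbb I$, or to an identity according as $m=1$, $m=2$, or $m\ge3$, and each of these is a cofibration in Lack's model structure (one uses, among other things, that suspension is left Quillen from the canonical model structure on $\cat$). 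The real work is the second point: one computes $\honat_2$ on the marked horns, faces, and joins of \cref{anodynemaps}, finding that the $l$-triviality extensions ($l\ge3$) become identities; the complicial thinness extensions become pushouts of $[1]\to\mathbb E$ for $m=2$ and identities for $m\ge3$ (the face being marked is either invisible to $\honat_2$ or classifies an already-invertible $2$-cell, by the cocycle relation on $\mathcal O_2[m]$ together with the $2$-cells already marked in $\Delta^k[m]'$); the complicial horn extensions become, in low dimensions, biequivalences such as $[2]\to\mathbb O_2[2]$ --- a biequivalence because inverting the top $2$-cell makes $\Map_{\mathbb O_2[2]}(0,2)\simeq\mathbb I$ contractible --- and identities for $m$ large; and the saturation extensions become finite composites of pushouts of $[1]\to\mathbb E$ and its marked-$2$-simplex analogue, because in $\honat_2(\Delta[l]\star\Delta[3]_{eq})$ all edges of the $\Delta[3]$-part are already equivalences. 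The analogous analysis covers the Leibniz products that complete the generating set of trivial cofibrations. In each case $\honat_2$ of the map is a cofibration that is a biequivalence, hence a trivial cofibration, so $\honat_2$ is left Quillen.

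The rest is formal. As $\honat_2$ is left Quillen and every object of $\psh{t\Delta}_{(\infty,2)}$ is cofibrant, $\honat_2$ preserves all weak equivalences; and as $\Nnat_2$ is then right Quillen and every $2$-category is fibrant (\cref{modelstructure2cat}), $\Nnat_2$ preserves all weak equivalences, i.e.\ it sends biequivalences to weak equivalences. Hence if $\Nnat_2(f)$ is a weak equivalence then $\honat_2\Nnat_2(f)$ is a biequivalence; since $\epsilon_{\cC}$ is a biequivalence for every $\cC$ by \cref{counitweakequivalence}, two-out-of-three applied to the naturality square of $\epsilon$ forces $f$ to be a biequivalence, and conversely biequivalences go to weak equivalences. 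This gives the weak-equivalence comparison. For fibrations, one implication is that $\Nnat_2$ is right Quillen. For the other, suppose $\Nnat_2(f)$ is a fibration; it then suffices to solve lifting problems for $f$ against Lack's generating trivial cofibrations $[0]\to\mathbb E$ and $\Sigma[0]\to\Sigma\mathbb I$. Each of these is a monomorphism of $2$-categories and a biequivalence, so $\Nnat_2$ sends it to a monomorphism that is a weak equivalence, i.e.\ to a trivial cofibration of $\psh{t\Delta}_{(\infty,2)}$; applying $\Nnat_2$ to a lifting problem for $f$ against it yields a solvable lifting problem against the fibration $\Nnat_2(f)$, and I would transport a solution back by adjunction, precomposing with a suitably chosen section of the acyclic fibration $\epsilon$ on the domain --- which exists because $\mathbb E$ and $\Sigma\mathbb I$ are cofibrant, having free underlying categories --- the two commutativities following from the triangle identities and the naturality of $\epsilon$. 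This gives the fibration comparison, and with it the theorem.

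The step I expect to be the genuine obstacle is the one buried in the middle paragraph: showing that $\honat_2$ carries the generating trivial cofibrations to trivial cofibrations of Lack's model structure. This is a concrete but lengthy combinatorial verification over the four families of elementary anodyne extensions and their Leibniz products, demanding an explicit description of $\honat_2$ on marked horns, faces and joins and the identification of each image as an acyclic cofibration in $2\cat$; all the delicacy lies in the low-dimensional cases $m\le3$, the higher ones collapsing to identities for truncation reasons.
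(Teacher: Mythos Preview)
Your argument is essentially correct, and its first half—showing that $\honat_2$ is left Quillen by computing its effect on generating cofibrations and on the elementary anodyne extensions—follows the paper's route (\cref{naturaladjunctionQuillen}, \cref{lemmacof}, \cref{lemmaanodyne}). The second half, however, is genuinely different. The paper first invokes Hirschhorn's transfer theorem to establish that \emph{some} right-transferred model structure on $2\cat$ exists (\cref{existencetransferred}), and then identifies it with Lack's by showing that the two model structures share the same cofibrations—via explicit retract diagrams exhibiting each of Lack's generating cofibrations as a retract of a transferred generating cofibration—and the same fibrant objects. You instead verify directly that Lack's weak equivalences and fibrations are created by $\Nnat_2$, using \cref{counitweakequivalence} and two-out-of-three for the weak-equivalence direction, and a section-of-the-counit lifting argument for the fibration direction. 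Your route is more conceptual and exploits \cref{counitweakequivalence}, which the paper proves independently; the paper's route avoids relying on the counit analysis but pays with the concrete retract verifications.

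Two small points to tighten. First, $[1]\to\mathbb E$ is \emph{not} itself a biequivalence (compare $\Map_{[1]}(1,0)=\varnothing$ with $\Map_{\mathbb E}(y,x)\neq\varnothing$), so describing $\honat_2$ of the $m=2$ thinness extensions as ``pushouts of $[1]\to\mathbb E$'' does not by itself yield a trivial cofibration; what makes these particular pushouts biequivalences is that the edge being marked is already an equivalence in $\honat_2(\Delta^k[2]')$, and you should say this. Second, no explicit generating set of trivial cofibrations for $\psh{t\Delta}_{(\infty,2)}$ is available—the paper says so in the proof of \cref{existencetransferred}—so the reduction to the elementary anodyne extensions goes through a general result on Cisinski-type localizations (the paper cites Cisinski's book), not through closure under Leibniz products.
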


Combining \cref{counitweakequivalence,Lackistransferred}, we also obtain the following corollary.

\begin{cor}
\label{corcofibrant}
For any $2$-category $\cC$ the counit
$ \epsilon^{\natural}_{\cC}\colon\honat_2(\Nnat_2(\cC))\to\cC$
     is cofibrant replacement for $\cC$ in the Lack model structure.
\end{cor}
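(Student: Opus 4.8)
The plan is to deduce this directly by combining \cref{counitweakequivalence} and \cref{Lackistransferred}. Recall that a cofibrant replacement of $\cC$ is a cofibrant $2$-category equipped with a weak equivalence to $\cC$; the counit $\epsilon^{\natural}_{\cC}$ will in fact turn out to be an acyclic fibration with cofibrant source, which is the strongest reasonable form of cofibrant replacement.

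First I would observe that $\Nnat_2(\cC)$ is cofibrant in $\psh{t\Delta}$: in the Riehl--Verity model structure the cofibrations are precisely the monomorphisms, so every $t\Delta$-set, and in particular the natural nerve of any $2$-category, is cofibrant. Next, \cref{Lackistransferred} asserts that Lack's model structure on $2\cat$ is right-transferred from $\psh{t\Delta}$ along $\Nnat_2$; by the very definition of a transferred model structure this makes $\Nnat_2$ a right Quillen functor, hence $\honat_2$ a left Quillen functor. Since left Quillen functors preserve cofibrant objects, it follows that $\honat_2(\Nnat_2(\cC))$ is cofibrant in the Lack model structure.

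It then remains only to invoke \cref{counitweakequivalence}, which states that the counit $\epsilon^{\natural}_{\cC}\colon\honat_2(\Nnat_2(\cC))\to\cC$ is an acyclic fibration, and in particular a biequivalence, i.e., a weak equivalence in the Lack model structure. Combining this with the previous paragraph, $\epsilon^{\natural}_{\cC}$ exhibits $\honat_2(\Nnat_2(\cC))$ as a cofibrant replacement of $\cC$, which is what we wanted.

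I do not anticipate a genuine obstacle here, since the statement is a formal consequence of \cref{counitweakequivalence,Lackistransferred}. The only point requiring mild care is the logical dependency: the cofibrancy of $\honat_2(\Nnat_2(\cC))$ should be obtained from \cref{Lackistransferred} through the left Quillen functor $\honat_2$, rather than re-proved by hand. As an alternative one could bypass \cref{Lackistransferred} entirely and verify cofibrancy directly, by checking that the underlying graph of $\honat_2(\Nnat_2(\cC))$ is free using the explicit colimit presentation of this $2$-category worked out in \cref{prooflemma}; this route is more laborious, but it has the feature of not relying on the transfer result.
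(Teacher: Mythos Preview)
Your argument is correct and matches the paper's own treatment: the corollary is stated as an immediate consequence of \cref{counitweakequivalence} and \cref{Lackistransferred}, and you have spelled out exactly the intended reasoning (cofibrancy of the source via the left Quillen functor $\honat_2$ applied to the cofibrant object $\Nnat_2(\cC)$, and acyclicity of the counit from \cref{counitweakequivalence}). Your remark about the alternative direct verification of cofibrancy via \cref{prooflemma} is a reasonable aside but not needed.
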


  While we already saw in \cref{counitweakequivalence} that the components of derived counit $\epsilon^\natural_{\cC}$ of the natural nerve-categorification adjunction at any $\cC$ is a weak equivalence, we do not expect the components of the derived unit to be weak equivalences, since we do not expect the natural nerve-categorification adjunction to be a Quillen equivalence. Indeed, here is a counterexample.
  
  \begin{rmk}
One can see that $\Nnat_2(\honat_2(\Delta[3]/\partial\Delta[3]))$ is isomorphic to $\Delta[0]$, and in particular the (derived) unit,
     \[
    \begin{tikzcd}
     \eta^{\natural}_{\Delta[3]/\partial\Delta[3]}\colon \Delta[3]/\partial\Delta[3]\arrow[r, two heads, "\not\cong"]&\Nnat_2(\honat_2(\Delta[3]/\partial\Delta[3]))\cong\Delta[0]
    \end{tikzcd}
    \]
     cannot be a weak equivalence since it is not a Kan weak equivalence between the underlying simplicial sets.
  \end{rmk}

 \addtocontents{toc}{\protect\setcounter{tocdepth}{1}}
\subsection{The natural nerve-categorification adjunction for $n>2$}

 Although it becomes more and more involved to formalize the natural nerve construction, it is easy to guess how the pattern would continue for higher values of $n$.
 
To this end, we first need to understand what is the correct notion of equivalence for an $m$-morphism in an $n$-category $\cC$. The idea is that an $m$-equivalence, as opposed to an $m$-isomorphism, should only be weakly invertible,
namely invertible up to higher coherence equivalences. This can be formalized coinductively (see e.g.~\cite[Def.6]{LMW} or \cite[\textsection1]{StreetOrientedSimplexes}), exploiting the higher categorical structure of $\cC$ as follows.
\begin{enumerate}[leftmargin=*]
    \item An $n$-morphism $\alpha\colon A\to B$ of $\cC$ is an \emph{$n$-equivalence} if it is an \emph{$n$-iso\-morphism}, namely if there exists an $n$-morphism $\beta\colon B\to A$ such that
     $$\id_A=\beta\alpha\colon A\to A\quad\text{ and }\quad\alpha\beta=\id_B\colon B\to B.$$
    \item For $0\le k\le n$, an $(n-k)$-morphism of $\cC$ is an \emph{$(n-k)$-equivalence} if there exists an $(n-k)$-morphism $\beta\colon B\to A$ together with $(n-k+1)$-equivalences

     $$\id_A\simeq\beta\alpha\colon A\to A\quad\text{ and }\quad\alpha\beta\simeq\id_B\colon B\to B.$$
\end{enumerate}

 \begin{notn}
For $n\ge0$, we denote by $\mathbb Adj_n$\footnote{The notation is justified by the fact that $\mathbb Adj_n$ is indeed the $n$-truncation of the $\omega$-category $\mathbb Adj$, that represents the \emph{homotopy coherent adjunction} for $\omega$-categories. This $\omega$-category is $\mathbf P^1$ following the terminology of \cite[\textsection4.7]{LMW}.} the \emph{free $n$-categorical $1$-equivalence}, i.e., the $n$-category with the property that $n$-functors $\mathbb Adj_n\to\cC$ into any $n$-category $\cC$ correspond to $1$-equivalences in $\cC$.
In particular, we recover as special instances the terminal category $\mathbb Adj_0\cong[0]$, the free $1$-isomorphism $\mathbb Adj_1\cong\mathbb I$ and the free $2$-categorical adjoint equivalence $\mathbb Adj_2\cong\mathbb E$.
More generally, for $n\ge0$ and $m\le n$ we denote by $\Sigma^{m-1}\mathbb Adj_{n-m+1}$ the \emph{free $n$-categorical $m$-equivalence}\footnote{The notation is justified by the fact that $\Sigma^{m-1}\mathbb Adj_{n-m+1}$ can in fact be understood as an iterated categorical suspension of the $(n-m+1)$-category $\mathbb Adj_{n-m+1}$.}. In particular, we recover the free $2$-isomorphism $\Sigma^{2-1}\mathbb Adj_{2-2+1}\cong\Sigma\mathbb I$.
 \end{notn}
 
We also need to fix  a notational convention for the free $m$-morphisms.

 \begin{notn}
For any $n\ge0$ and $m\le n$ we denote by $\Sigma^m[0]$ the \emph{free $m$-morphism of an $n$-category}, i.e., the $n$-category with the property that $n$-functors $\Sigma^m[0]\to\cC$ into any $n$-category $\cC$ correspond to $m$-morphisms in $\cC$. This $n$-category is in fact an $m$-category, and consists essentially of a $m$-morphism between two parallel $(m-1)$-morphisms\footnote{The $n$-category $\Sigma^{m}[0]$ appears often in the literature; Street \cite{StreetOrientedSimplexes} denotes it $2_m$, Joyal \cite{JoyalDisks} and Ara \cite{Ara} by $D_m$, and Lafont--M{\'e}tayer--Worytkiewicz \cite{LMW} by $\mathbf{O}^m$.}.
In particular, we recover the terminal category
 $\Sigma^0[0]\cong[0]$, the free $1$-morphism $\Sigma^1[0]\cong[1]$ and the free $2$-morphism $\Sigma^2[0]\cong\Sigma[1]$.
\end{notn}

With the notions of free $m$-equivalences and free $m$-morphisms of an $n$-category $\cC$ at our disposal, we can explain how to modify the Roberts-Street adjunction to obtain an adjunction with the desired homotopical properties.
In order to define the Roberts-Street adjunction, a crucial role was played by the $(m-1)$-truncated $m$-th oriental $\mathcal O_{m-1}[m]$. This $n$-category is by definition obtained by glueing an identity $m$-morphism on the top $m$-morphism of the $m$-th oriental $\mathcal O[m]$, and we can therefore express it as a pushout of $m$-categories
$$\mathcal O_{m-1}[m]\cong\mathcal O_n[m]\aamalg{\Sigma^m[0]}\Sigma^{m-1}[0].$$
Inspired by how we treated the cases $n=1,2$, rather than glueing an identity $m$-morphism $\Sigma^{m-1}[0]$ we instead glue a free $n$-categorical $m$-equivalence $\Sigma^{m-1}\mathbb Adj_{n-m+1}$. This idea is formalized as follows.

 \begin{notn}
 For $n\ge0$ and $m\le n$ we denote by
 $\mathbb O_n[m]$ the $n$-category obtained from $\mathcal{O}_n[m]$ by gluing an $m$-equivalence to the top morphism of $\mathcal O[m]$ regarded as an $n$-category. It can be expressed as the pushout
 $$\mathbb O_n[m]\cong\mathcal O_n[m]\aamalg{\Sigma^{m}[0]}\Sigma^{m-1}\mathbb Adj_{n-m+1}.$$
\end{notn}

  \begin{const}
 The co-$t\Delta$-object in $n\cat$
 given by
 $$[m]\mapsto\mathcal O_n[m]\quad\text{ and }\quad[m]_t\mapsto
 \left\{
 \begin{array}{clc}
\mathbb O_{n}[m]&m\le n\\
 \mathcal O_n[m]&m>n
 \end{array}
 \right.
 $$
 induces an adjunction
 $$\honat_n\colon\psh{t\Delta}\rightleftarrows n\cat\colon\Nnat_n.$$
  We call $\Nnat_n$ the \emph{natural $n$-nerve} and $\honat_n$ the natural \emph{natural $n$-categorification}.
 \end{const}
 
 In particular, $\Nnat_n(\cC)$ is a $t\Delta$-set whose underlying simplicial set is the Street nerve $\Nstreet_n(\cC)$ and the marked $m$-simplices are those witnessed by $m$-morphisms that are $m$-equivalences in $\cC$ (possibly multiple times), and $m$-simplices are marked in dimension higher than $m>n$ (each uniquely).
 
In this setup, we expect to be able to generalize the arguments presented in this and in the next section to the case of general $n$, and prove in forthcoming work the following conjectures.
  \begin{conjecture}
   The model structure on $n\cat$ is right-transferred from the Riehl--Verity model structure on $\psh{t\Delta}$ for $(\infty,n)$-categories along the natural nerve $\Nnat_n\colon n\cat\to\psh{t\Delta}$.
 \end{conjecture}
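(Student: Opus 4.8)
The plan is to generalize the argument for $n=2$ --- \cref{Lackistransferred}, carried out in \cref{appendix1} --- to arbitrary $n$, by induction on $n$, with the base cases $n=0,1$ provided by \cref{modelstructureonSettransferred,modelstructuretransferred1}. Throughout one applies the standard right-transfer criterion: since $n\cat$ is complete and cocomplete and $\psh{t\Delta}_{(\infty,n)}$ is cofibrantly generated, it is enough to verify \emph{(a)} the smallness hypothesis, that $\Nnat_n$ preserves filtered colimits, and \emph{(b)} that every relative $\honat_n(J)$-cell complex is sent by $\Nnat_n$ to a weak equivalence, where $J$ is a set of generating acyclic cofibrations of $\psh{t\Delta}_{(\infty,n)}$ built out of the elementary anodyne extensions of \cref{anodynemaps}. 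Condition \emph{(a)} holds because the truncated orientals $\mathcal O_n[m]$ and the $n$-categories $\mathbb O_n[m]$ are compact objects of $n\cat$ --- finite computads, respectively finite colimits of such --- so that $\Hom_{n\cat}(\mathcal O_n[m],-)$ and $\Hom_{n\cat}(\mathbb O_n[m],-)$ commute with filtered colimits.

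The combinatorial core, and the step I expect to be the main obstacle, is to make \emph{(b)} checkable. This needs, first, an explicit presentation of the $n$-category $\honat_n(\Nnat_n(\cC))$ for an arbitrary $n$-category $\cC$, generalizing the one used for $n=2$ in \cref{prooflemma}. Even in that case the presentation is intricate --- the underlying $1$-category of $\honat_2(\Nnat_2(\cC))$ is free on an infinite generating set coming from the ways of completing an equivalence to an adjoint equivalence --- and for general $n$ one must control simultaneously, for every $m\le n$, the free $n$-categorical $m$-equivalences $\Sigma^{m-1}\mathbb Adj_{n-m+1}$ glued onto the $m$-th truncated oriental, along with their several markings. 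From such a presentation I would extract the analogue of \cref{localequivalence}: for all objects $x,y$ the counit induces, up to $(n-1)$-categorical equivalence, the map
\[
\honat_{n-1}(\Nnat_{n-1}(\Map_{\cC}(x,y)))\longrightarrow\Map_{\cC}(x,y),
\]
which by the inductive hypothesis (the dimension $n-1$ case of the generalization of \cref{counitweakequivalence}) is an $(n-1)$-categorical equivalence. Granting this, the generalization of \cref{counitweakequivalence} --- that $\epsilon^{\natural}_{\cC}\colon\honat_n(\Nnat_n(\cC))\to\cC$ is an acyclic fibration in $n\cat$, hence an $n$-categorical equivalence, and $\honat_n(\Nnat_n(\cC))$ a cofibrant replacement of $\cC$ --- follows as in the proof of \cref{counitweakequivalence}: the counit is bijective on objects and surjective on $1$-morphisms, which handles the low-dimensional generating cofibrations, and its right lifting against the remaining ones (the higher analogues of $\Sigma([0]\amalg[0])\hookrightarrow\Sigma[1]$ and $\Sigma[\rightrightarrows]\to\Sigma[1]$) reduces to the hom-wise statement.

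Then I would carry out \emph{(b)} by computing $\honat_n$ on the elementary anodyne extensions of \cref{anodynemaps}, and on their pushout-products with the boundary inclusions $\partial\Delta[m]\to\Delta[m]$, and checking that each image is an acyclic cofibration of $n\cat$, i.e.\ a map with free underlying $n$-graph that is an $n$-categorical equivalence. The triviality and saturation extensions should be formal; the real work is the complicial horn and thinness extensions, which needs a workable description of horns in the $n$-truncated orientals with their natural stratification --- a second source of difficulty. As acyclic cofibrations of $n\cat$ are stable under pushout and transfinite composition, this gives that $\honat_n(J)$-cell complexes are acyclic cofibrations, hence weak equivalences, so that \emph{(a)} and \emph{(b)} together produce, via the transfer criterion, a cofibrantly generated model structure on $n\cat$ and a Quillen pair $(\honat_n,\Nnat_n)$.

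Finally, I would identify this structure with the model structure on $n\cat$ of \cite{LMW}. Its weak equivalences are by construction the maps $f$ with $\Nnat_n(f)$ a weak equivalence. If $\Nnat_n(f)$ is a weak equivalence, then $\honat_n(\Nnat_n(f))$ is one between cofibrant objects, hence --- using the counits of the generalized \cref{counitweakequivalence} and two-out-of-three --- $f$ is an $n$-categorical equivalence; conversely, checking that $\honat_n$ sends the generating cofibrations $\partial\Delta[m]\to\Delta[m]$ and $\Delta[m]\to\Delta[m]_t$ to cofibrations of the \cite{LMW} structure (the truncated orientals, and the maps $\mathcal O_n[m]\to\mathbb O_n[m]$, being cellular) makes $\honat_n$ left Quillen with target that structure too, so that $\Nnat_n$ sends $n$-categorical equivalences to weak equivalences by Ken Brown's lemma (all $n$-categories being fibrant in the \cite{LMW} structure). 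A comparison of generating (acyclic) cofibrations under $\honat_n$ then shows the two model structures have the same cofibrations as well, so they coincide, which is the asserted right-transfer.
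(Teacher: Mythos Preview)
The statement you are addressing is a \emph{conjecture}: the paper does not prove it, and says explicitly that the authors ``expect to be able to generalize the arguments presented in this and in the next section to the case of general $n$, and prove in forthcoming work the following conjectures.'' So there is no proof in the paper to compare against; one can only compare your plan to the $n=2$ argument you are proposing to generalize.

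Your outline is broadly in the spirit of the paper's $n=2$ proof, but there is one genuine gap and one structural divergence worth flagging.

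\textbf{The gap.} Your inductive step for the local equivalence asserts that, up to $(n-1)$-equivalence, the counit on homs is
\[
\honat_{n-1}\bigl(\Nnat_{n-1}(\Map_{\cC}(x,y))\bigr)\longrightarrow\Map_{\cC}(x,y).
\]
This identification is not justified and is almost certainly false on the nose: the hom-$(n-1)$-category $\Map_{\honat_n(\Nnat_n(\cC))}(x,y)$ depends on all of $\cC$, not just on $\Map_{\cC}(x,y)$. Already for $n=2$, the explicit description in \cref{prooflemma} shows that objects of this hom-category are words involving formal adjoints $\widetilde g_{(f,g,\eta,\epsilon)}$ coming from equivalences anywhere in $\cC$, not from $\Map_{\cC}(x,y)$ alone. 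The paper does \emph{not} reduce the $n=2$ case of \cref{localequivalence} to $n=1$; it builds an explicit section $F_{x,y}$ and a natural isomorphism $\Psi$ by hand. Your plan hides precisely this work inside an unproven identification.

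\textbf{The structural difference.} In the paper, the Quillen pair (\cref{naturaladjunctionQuillen}) is established first, by direct computation on generating cofibrations and anodyne extensions (\cref{lemmacof,lemmaanodyne}); the transfer (\cref{existencetransferred}) then follows because all $2$-categories are fibrant, and the identification with Lack's structure uses only cofibrations and fibrant objects. The counit result (\cref{counitweakequivalence}) is logically independent of the transfer. Your plan instead routes the identification of weak equivalences through the counit result, entangling the two conjectures. This is not wrong, but it means you must prove the harder (second) conjecture before you can finish the first, whereas the paper's $n=2$ logic keeps them separate.

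In short: your plan is a reasonable sketch of what a generalization might look like, and you correctly locate the combinatorial difficulty in computing $\honat_n$ on horn and thinness extensions; but the inductive bridge you propose for the local equivalence is the missing idea, not a step one can take for granted.
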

 
 \begin{conjecture}
 For any $n$-category $\cC$ the (derived) counit
   \[
    \begin{tikzcd}
     \epsilon^{\natural}_{\cC}\colon\honat_n(\Nnat_n(\cC))\arrow[r, two heads, "\simeq"]&\cC
    \end{tikzcd}
    \]
    is an acyclic fibration, and in particular an $n$-categorical equivalence.
 \end{conjecture}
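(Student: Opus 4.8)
The plan is to argue by induction on $n$. The cases $n=0,1,2$ are already available: for $n=0$ the natural nerve coincides with the Roberts--Street nerve and the counit is an isomorphism by \cref{naivecounitiso}, for $n=1$ it is an isomorphism by \cref{counitiso1}, and for $n=2$ it is an acyclic fibration by \cref{counitweakequivalence}. For the inductive step, I would exploit the fact --- visible already in the list of generating cofibrations of $2\cat$ --- that the generating cofibrations of $n\cat$ are $[-1]\hookrightarrow[0]$ together with the categorical suspensions $\Sigma(g)$ of the generating cofibrations $g$ of $(n-1)\cat$. Since a lifting problem for $\Sigma(g)$ against $\epsilon^{\natural}_{\cC}$ is the same as a choice of two objects $x,y$ of $\cC$ together with a lifting problem for $g$ against the induced functor
\[
\epsilon^{\natural}_{\cC}(x,y)\colon\Map_{\honat_n(\Nnat_n(\cC))}(x,y)\longrightarrow\Map_{\cC}(x,y),
\]
it suffices to show that $\epsilon^{\natural}_{\cC}$ is bijective on objects and that each $\epsilon^{\natural}_{\cC}(x,y)$ is an acyclic fibration in $(n-1)\cat$. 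Bijectivity on objects is immediate from the construction, since $\Nnat_n(\cC)$ has $0$-simplices the objects of $\cC$ and $\honat_n$ glues one copy of $[0]$ per $0$-simplex.

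For the statement on hom-categories I would first record, generalizing the description of $\honat_2(\Nnat_2(\cC))$ used in \cref{prooflemma}, that $\honat_n(\Nnat_n(\cC))$ is the $n$-category obtained by gluing a copy of the truncated oriental $\mathcal{O}_n[m]$ along each $m$-simplex of $\Nnat_n(\cC)$ and a copy of $\mathbb{O}_n[m]$ along each marked $m$-simplex (with multiplicity, since $1$-simplices may be marked several times). Passage to the hom-$(n-1)$-category does not commute with these colimits, so the heart of the argument is to identify $\Map_{\honat_n(\Nnat_n(\cC))}(x,y)$ with $\honat_{n-1}$ applied to an explicit ``mapping $t\Delta$-set'' $\mathcal{M}_{x,y}(\Nnat_n(\cC))$ assembled from necklaces in $\Nnat_n(\cC)$ running from $x$ to $y$ --- in the spirit of the necklace description of the mapping spaces of the $\mathfrak{C}$-construction, and using that the mapping $(n-1)$-categories of the $\mathcal{O}_n[m]$ are governed by $\mathfrak{C}[\Delta[m]]$-type combinatorics --- together with a natural comparison $u\colon\mathcal{M}_{x,y}(\Nnat_n(\cC))\to\Nnat_{n-1}(\Map_{\cC}(x,y))$ satisfying $\epsilon^{\natural}_{\cC}(x,y)=\epsilon^{\natural}_{\Map_{\cC}(x,y)}\circ\honat_{n-1}(u)$. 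I would then show that $u$ is a weak equivalence in $\psh{t\Delta}_{(\infty,n-1)}$, which requires checking that the natural marking of $\Nnat_n(\cC)$ (the $m$-simplices witnessed by $m$-equivalences in $\cC$) restricts under the necklace construction to the natural marking of $\Nnat_{n-1}(\Map_{\cC}(x,y))$ (the $(m-1)$-simplices witnessed by $(m-1)$-equivalences), unwinding the coinductive definition of $m$-equivalence, and using the standard fact that a mapping complex of an $(\infty,n)$-category is an $(\infty,n-1)$-category together with the explicit form of the elementary anodyne extensions from \cref{anodynemaps}. Since every $t\Delta$-set is cofibrant and $\honat_{n-1}$ is left Quillen --- by \cref{Lackistransferred} for $n=2$ and, in the inductive step, by the companion transfer conjecture, which I would incorporate into a simultaneous induction --- the map $\honat_{n-1}(u)$ is a weak equivalence of $(n-1)$-categories; composing with the acyclic fibration $\epsilon^{\natural}_{\Map_{\cC}(x,y)}$ supplied by the inductive hypothesis shows that $\epsilon^{\natural}_{\cC}(x,y)$ is an $(n-1)$-categorical equivalence. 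As it is moreover surjective in every dimension (each cell of $\Map_{\cC}(x,y)$ underlies a cell of one of the glued-in orientals), and since --- as used already for $n=2$ --- a weak equivalence that is surjective in every dimension is an acyclic fibration in these model structures, we conclude that $\epsilon^{\natural}_{\cC}(x,y)$ is an acyclic fibration in $(n-1)\cat$, completing the induction.

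The main obstacle is the compatibility of the natural nerve--categorification adjunction with passing to hom-categories, i.e.\ the identification of $\Map_{\honat_n(\Nnat_n(\cC))}(x,y)$ with $\honat_{n-1}(\mathcal{M}_{x,y}(\Nnat_n(\cC)))$ and the verification that the comparison $u$ is a weak equivalence. Two features make this delicate. First, hom-categories are not preserved by the defining colimits of $\honat_n$, so one cannot simply commute $\honat_n$ past $\Map(x,y)$ and must instead build and validate a necklace model for the mapping categories of realizations, checking it is homotopically correct against the complicial model structure. Second --- and this is the genuinely new input beyond $n=2$ --- one must establish the coherence of the natural marking in all dimensions: that an $m$-simplex witnessed by an $m$-equivalence contributes a cell witnessed by an $(m-1)$-equivalence to the mapping $t\Delta$-set. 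For $n=2$ this is essentially the bookkeeping carried out in \cref{prooflemma}, but for general $n$ it requires a coherence result for equivalences in $n$-categories phrased in terms of orientals, together with the compatibility of the building blocks $\mathbb{O}_n[m]$ with categorical suspension.
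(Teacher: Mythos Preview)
The statement you are addressing is stated in the paper as a \emph{conjecture}, not a theorem: the authors write that they ``expect to be able to generalize the arguments \ldots\ and prove in forthcoming work the following conjectures,'' and no proof is given. There is therefore no proof in the paper to compare your attempt against.

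On its own terms, what you have written is not a proof but a proof \emph{strategy}, and you are candid about this: the inductive scheme reduces everything to two assertions you do not establish --- the identification of $\Map_{\honat_n(\Nnat_n(\cC))}(x,y)$ with $\honat_{n-1}$ of a necklace-style mapping $t\Delta$-set, and the claim that the comparison map $u$ to $\Nnat_{n-1}(\Map_{\cC}(x,y))$ is a weak equivalence. These are exactly the hard parts, and neither follows from anything in the paper. Note also that your scheme invokes the companion transfer conjecture (that $\honat_{n-1}$ is left Quillen) as part of a ``simultaneous induction''; this is another unproven input, so at best you are outlining how the two conjectures might be attacked together rather than proving either. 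Finally, the claim that ``a weak equivalence that is surjective in every dimension is an acyclic fibration'' is not how the paper handles the $n=2$ case: there the conclusion that $\epsilon^{\natural}_{\cC}(x,y)$ is an acyclic fibration comes from \cref{localequivalence} showing it is \emph{fully faithful} (not merely an equivalence plus surjective), and the general statement you invoke would itself need justification for the model structures on $n\cat$.

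It is worth noting that your inductive/necklace approach is structurally quite different from what the paper does for $n=2$ in \cref{prooflemma}: there the authors give a hands-on presentation of $\honat_2(\Nnat_2(\cC))$ by generators and relations and construct an explicit inverse equivalence $F_{x,y}$ on each hom-category, with no appeal to the $n=1$ case or to any mapping-space model. Your plan is more conceptual and would, if it worked, give a uniform argument; but the paper gives no indication that the necklace identification you need is available, and establishing it is likely the bulk of any actual proof.
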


 \section{Relations between the two nerve-categorification adjunctions}

We show that the natural nerve is a fibrant replacement of the Roberts-Street nerve.

\begin{const}
For any $2$-category $\cC$, there is a natural inclusion of $t\Delta$-sets
$$\Ntdelta_2(\cC)\hookrightarrow\Nnat_2(\cC)$$
given by taking the identity on the underlying simplicial sets and by regarding naturally the simplices witnessed by identities as marked simplices in $\Nnat_2(\cC)$.
\end{const}

We recall that an \emph{anodyne extension} is a map that can be expressed as a retract of a transfinite composition of pushouts of elementary anodyne extensions.

  \begin{thm} 
  \label{naturalnervefibrantreplacement}
  Let $\cC$ be a $2$-category.
  \begin{enumerate}[leftmargin=*]
  \item The nerve $\Ntdelta_2(\cC)$ is fibrant if and only if $\cC$ it has no non-identity $1$-isomorphisms and no non-identity $2$-isomorphisms.
\item The natural nerve $\Nnat_2(\cC)$ is fibrant.
      \item The canonical map $\Ntdelta_2(\cC)\to\Nnat_2(\cC)$ is an anodyne extension, and in particular a weak equivalence in the model structure for $(\infty,2)$-categories.
  \end{enumerate}
  \end{thm}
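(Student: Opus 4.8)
The plan is to prove part~(2) first, to deduce the ``if'' direction of part~(1) from it, to establish the ``only if'' direction of part~(1) by producing configurations that cannot be lifted, and finally to prove part~(3) by presenting the inclusion $\Ntdelta_2(\cC)\hookrightarrow\Nnat_2(\cC)$ as an anodyne extension. Throughout we use that the underlying simplicial set of both nerves is the Street--Duskin nerve $\Nstreet_2(\cC)$, which is a strict $2$-complicial set and in particular a $3$-coskeletal simplicial set, and that in a $2$-category every equivalence extends to an adjoint equivalence and composition with an equivalence induces an equivalence of hom-categories.

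For part~(2) I would verify the right lifting property of $\Nnat_2(\cC)$ against each of the four families of elementary anodyne extensions of \cref{anodynemaps}. For the complicial horn extensions $\Lambda^k[m]\to\Delta^k[m]$ the underlying simplicial horn admits a (unique) filler: for $m\ge 4$ this is automatic by $3$-coskeletality, and all the simplices one must check are already marked; for $m=3$ and $m=2$ one solves the Street--Duskin relation, using for the inner horns that one may take the composite as the missing edge and an identity $2$-morphism as the missing $2$-simplex (so the new $2$-simplex is invertible, hence marked, while the other newly created face is never required to be marked), and for the outer horns that composition with the equivalence carried by the marked edge of the horn is essentially surjective, resp.\ fully faithful, on hom-categories. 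The thinness extensions amount to the facts that a composite of adjoint equivalences carries a canonical adjoint-equivalence structure and that in a $3$-simplex relation a $2$-morphism determined by invertible ones is invertible; the required marking on the remaining edge (resp.\ $2$-simplex) is produced by composing, inverting and transporting the given marking data along the invertible $2$-cell. The triviality extensions $\Delta[l]\to\Delta[l]_t$ for $l>2$ hold because every $m$-simplex with $m>2$ is already marked, uniquely, so that the relevant structure map is a bijection. Finally, the saturation extensions reduce, via the combinatorics of the join, to the statement that any $1$-simplex occurring in an $\eqDelta$-shaped (or $\Delta[l]\star\eqDelta$-shaped) configuration of $\Nnat_2(\cC)$ represents an equivalence of $\cC$; this follows from the $2$-categorical ``two-out-of-six'' property (if $gf$ and $hg$ are equivalences then so are $f$, $g$, $h$), and the required marking is supplied by completing each such equivalence to an adjoint equivalence. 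I expect the verification of the saturation extensions, in particular the versions $\Delta[l]\star\eqDelta\to\Delta[l]\star\Delta[3]^\sharp$ with $l\ge 0$, whose joins also force marks on higher-dimensional simplices, to be the most laborious step.

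For part~(1), the ``if'' direction follows immediately: if $\cC$ has no non-identity $1$- or $2$-isomorphisms, then every equivalence of $\cC$ is an isomorphism and hence an identity, and the only adjoint-equivalence structure on an identity $1$-morphism is the trivial one; consequently the markings of $\Ntdelta_2(\cC)$ and $\Nnat_2(\cC)$ coincide, so $\Ntdelta_2(\cC)\cong\Nnat_2(\cC)$ is fibrant by part~(2). For the ``only if'' direction I argue by contraposition. A non-identity $1$-isomorphism $f$ with inverse $g$ determines a map $\eqDelta\to\Ntdelta_2(\cC)$ whose underlying $3$-simplex has edges $f$, $g$, $f$ (and $[03]=f$) and all four $2$-morphism faces equal to identities, so that the edges $[02]$ and $[13]$ are identities and every face of dimension $\ge 2$ is marked; this map admits no extension along $\eqDelta\to\Delta[3]^\sharp$, since such an extension would mark the edge $f$, forcing it to be an identity. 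An analogous configuration built from a non-identity $2$-isomorphism together with its inverse, realized inside a suitable $4$-simplex and obstructing the saturation extension $\Delta[0]\star\eqDelta\to\Delta[0]\star\Delta[3]^\sharp$, shows that a non-identity $2$-isomorphism likewise prevents fibrancy. Hence if $\Ntdelta_2(\cC)$ is fibrant, $\cC$ has neither non-identity $1$-isomorphisms nor non-identity $2$-isomorphisms.

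For part~(3), the canonical map $\Ntdelta_2(\cC)\hookrightarrow\Nnat_2(\cC)$ is a monomorphism, hence a cofibration, and is the identity on underlying simplicial sets and on $m$-simplices for $m>2$; its only effect is to enlarge the marking by adding, for each $2$-isomorphism of $\cC$, the corresponding $2$-simplex and, for each $1$-equivalence together with a choice of adjoint-equivalence structure, the corresponding $1$-simplex (the trivial structure on an identity being already present). I would realize this enlargement as a transfinite composite of pushouts of thinness and saturation elementary anodyne extensions: the joined saturation extensions $\Delta[l]\star\eqDelta\to\Delta[l]\star\Delta[3]^\sharp$, whose joins for $l\ge 0$ carry ``mixed'' markings that newly mark both $1$- and $2$-simplices, together with the $m=2,3$ thinness extensions, suffice to introduce exactly the marks distinguishing $\Nnat_2(\cC)$ from $\Ntdelta_2(\cC)$, with the multiplicities on $1$-simplices accounted for by ranging over adjoint-equivalence structures. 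The main obstacle is the transfinite bookkeeping, i.e.\ ordering the marks so that at each stage the configuration needed to apply the next elementary anodyne extension is already available; as in part~(2), what makes this possible is that every equivalence of a $2$-category extends to an adjoint equivalence together with the two-out-of-six property. As an alternative, since in the Riehl--Verity model structure the anodyne extensions coincide with the acyclic cofibrations, it suffices to check that $\Ntdelta_2(\cC)\hookrightarrow\Nnat_2(\cC)$ is a weak equivalence, and, being a cofibration into the fibrant object $\Nnat_2(\cC)$ which is moreover an isomorphism on underlying simplicial sets, this can be verified by comparing homotopy function complexes into arbitrary $(\infty,2)$-categories; the asserted consequence that $\Nnat_2(\cC)$ is a fibrant replacement of $\Ntdelta_2(\cC)$ then follows formally.
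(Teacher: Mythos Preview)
Your overall strategy is sound and close to the paper's, but there are two points worth flagging.

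For part~(2) you take a genuinely different route: you verify the right lifting property of $\Nnat_2(\cC)$ against each family of elementary anodyne extensions by hand. The paper instead deduces~(2) in one line from the fact that $(\honat_2,\Nnat_2)$ is a Quillen pair (\cref{naturaladjunctionQuillen}), since then $\Nnat_2$ is right Quillen and every $2$-category is fibrant in Lack's model structure. Your direct verification is not wasted work, however: by adjunction, the lifting problem $\Lambda\to\Nnat_2(\cC)$ extending to $\Delta$ is equivalent to every map $\honat_2(\Lambda)\to\cC$ extending along $\honat_2(\Lambda)\to\honat_2(\Delta)$, and the paper establishes exactly this in \cref{lemmaanodyne} by showing that $\honat_2$ sends each elementary anodyne extension to an acyclic cofibration (usually an isomorphism). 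So the two arguments have the same $2$-categorical content---two-out-of-three and two-out-of-six for equivalences and $2$-isomorphisms---and differ only in packaging.

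For part~(3) your primary approach, building $\Nnat_2(\cC)$ from $\Ntdelta_2(\cC)$ by pushouts of saturation and thinness anodyne extensions, is precisely what the paper does; the paper is more explicit about the order (first mark $2$-simplices with degenerate $0$-th face via $\Delta[0]\star\eqDelta\to\Delta[0]\star\Delta[3]^\sharp$, then propagate via $\Delta^2[3]'\to\Delta^2[3]''$, then mark $1$-equivalences via $\eqDelta\to\Delta[3]^\sharp$, interspersing retracts to collapse multiple markings). Your proposed \emph{alternative}, however, has a gap: you assert that ``in the Riehl--Verity model structure the anodyne extensions coincide with the acyclic cofibrations'', but this is not established in the paper and is not a general feature of Cisinski-type model structures---anodyne extensions are always acyclic cofibrations, but the converse can fail. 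So showing the map is a weak equivalence and a cofibration would not, on its own, give you that it is an anodyne extension in the sense of \cref{anodynemaps}; you really do need the explicit decomposition.
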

  
  Part of the combinatorics that goes into the proof was already established by Gurski in \cite{gurski}.
  
    \begin{proof}
    For part (1), we observe that if $\cC$ it has no weakly invertible invertible $1$-morphisms and no invertible $2$-morphisms the Roberts--Street nerve $\Ntdelta_2(\cC)$ agrees with the natural nerve $\Nnat_2(\cC)$, 
    which we will know to be fibrant after we will have shown Part (3). Viceversa, if $\cC$ contains a non-identity invertible $2$-morphism $\alpha\colon f\Rightarrow g$, the map
    $$\Delta[0]\star\eqDelta\to\Ntdelta_2(\cC)$$
from (\ref{stratifiedmap})
    does not lift against the saturation anodyne extension
    $$\Delta[0]\star\eqDelta\to\Delta[0]\star\Delta[3]^\sharp,$$
    so the Roberts--Street nerve $\Ntdelta_2(\cC)$ is not fibrant. Similarly, if $\cC$ contains a non-identity invertible $1$-morphism $f\colon x\to y$, the map
    $$\eqDelta\to\Ntdelta_2(\cC)$$
    whose image is given by
    \[
\simpfver{f}{f^{-1}}{f}{f}{\id_x}{\id_y}{\id_f}{\id_x}{\id_f}\simpfvercontinued{\id_y}{x}{y}{x}{y}.
\]
    does not lift against the saturation anodyne extension
    $$\eqDelta\to\Delta[3]^\sharp,$$
    so the Roberts--Street nerve $\Ntdelta_2(\cC)$ is not fibrant.
    
    Part (2) is a consequence of the fact that the natural nerve-categorification adjunction is a Quillen pair, as shown in \cref{naturaladjunctionQuillen}.
    
For part (3), we show how to build the natural nerve $\Nnat_2(\cC)$ from $\Ntdelta_2(\cC)$ by means of a (finite) composition
of retracts of pushouts of sums of elementary anodyne extensions.
Given that the inclusion $\Ntdelta_2(\cC)\to\Nnat_2(\cC)$ does not change the underlying simplicial set, to obtain $\Nnat_2(\cC)$ from $\Ntdelta_2(\cC)$ we only need add the (unique) marking of the $2$-simplices of $\Ntdelta_2(\cC)$ witnessed by $2$-isomorphisms of $\cC$ and the (generally not unique) marking of the $1$-simplices witnessed by $1$-equivalences of $\cC$.

Given any $2$-isomorphism $\alpha\colon f \Rightarrow g$ with inverse $\beta\colon g\Rightarrow f$, we can define a map
\begin{equation}
    \label{stratifiedmap}
\Delta[0]\star\eqDelta \to \Ntdelta_2(\cC)
\end{equation}
whose image is the $4$-simplex of the nerve $\Ntdelta_2(\cC)$ uniquely determined by the following $2$-skeleton
\begin{center}
 \begin{tikzpicture}[scale=0.8, font=\scriptsize]
  \def\l{1.8cm}
    \begin{scope}  
   \draw[fill] (0,0) node (b0){$x$}; 
  \draw[fill] (\l,0) node (b1){$y$};
   \draw[fill] (1.5*\l,0.75*\l)  node (b2){$y$};
   \draw[fill] (0.5*\l,1.5*\l) node (b3){$y$};
   \draw[fill] (-0.5*\l,0.75*\l) node (b4){$y$};

   \draw[<-] (b1)--node[below]{$f$}(b0);
   \draw[double] (b1)--(b2);
   \draw[double] (b2)--(b3);
   \draw[double] (b3)--(b4);
    \draw[<-] (b4)--node[left](A2){}node[left](A3){$g$}(b0);
   \draw[double] (b1)--(b3);
   \draw[<-] (b3)--node[left](A1){}(b0);

    \draw[twoarrowlonger] (A1)--node[above]{$\id_f$}(b1);
\draw[twoarrowlonger] (A2)--node[below]{$\beta$}(b3);
   \end{scope}
   
       \begin{scope}[xshift=2.5*\l] 
   \draw[fill] (0,0) node (b0){$x$}; 
  \draw[fill] (\l,0) node (b1){$y$};
   \draw[fill] (1.5*\l,0.75*\l)  node (b2){$y$};
   \draw[fill] (0.5*\l,1.5*\l) node (b3){$y$};
   \draw[fill] (-0.5*\l,0.75*\l) node (b4){$y$};

   \draw[<-] (b1)--node[below]{$f$}(b0);
   \draw[double] (b1)--(b2);
   \draw[double] (b2)--node[below](B2){}(b3);
   \draw[double] (b3)--(b4);
    \draw[<-] (b4)--node[left](B4){}node[left](B5){$g$}(b0);
   \draw[->] (b0)--node[left, xshift=0.2cm, yshift=-0.1cm](B1){}(b2);
   \draw[<-] (b3)--node[below](B3){}(b0);

    \draw[twoarrowlonger] (B3)--node[above]{$\alpha$}(b2);
     \draw[twoarrow] (B1)--node[above, xshift=0.15cm]{$\beta$}(b1);
     \draw[twoarrowlonger] (B4)--node[below]{$\beta$}(b3);
   \end{scope}
   
       \begin{scope}[xshift=4*\l, yshift=-1.5*\l]
   \draw[fill] (0,0) node (b0){$x$}; 
  \draw[fill] (\l,0) node (b1){$y$};
   \draw[fill] (1.5*\l,0.75*\l)  node (b2){$y$};
   \draw[fill] (0.5*\l,1.5*\l) node (b3){$y$};
   \draw[fill] (-0.5*\l,0.75*\l) node (b4){$y$};

   \draw[<-] (b1)--node[below]{$f$}(b0);
   \draw[double] (b1)--(b2);
   \draw[double] (b2)--(b3);
   \draw[double] (b3)--(b4);
    \draw[<-] (b4)--node[left](C2){}node[left](C3){$g$}(b0);
   \draw[->] (b0)--node[left, xshift=0.2cm, yshift=-0.1cm](C1){}(b2);
   \draw[double] (b2)--(b4);

\draw[twoarrow] (C1)--node[above, xshift=0.15cm]{$\beta$}(b1);
 \draw[twoarrowlonger] (C3)--node[below, near start]{$\id_g$}(b2);
   \end{scope}
   
       \begin{scope}[xshift=1.25*\l, yshift=-2*\l]
   \draw[fill] (0,0) node (b0){$x$}; 
  \draw[fill] (\l,0) node (b1){$y$};
   \draw[fill] (1.5*\l,0.75*\l)  node (b2){$y$};
   \draw[fill] (0.5*\l,1.5*\l) node (b3){$y$};
   \draw[fill] (-0.5*\l,0.75*\l) node (b4){$y$};

   \draw[<-] (b1)--node[below]{$f$}(b0);
   \draw[double] (b1)--(b2);
   \draw[double] (b2)--(b3);
   \draw[double] (b3)--(b4);
    \draw[<-] (b4)--node[left](D2){}node[left](D3){$g$}(b0);
   \draw[double] (b2)--(b4);
   \draw[double] (b4)--(b1);

\draw[twoarrowlonger] (D3)--node[above, xshift=-0.5cm]{$\beta$}(b1);
   \end{scope}
   
       \begin{scope}[xshift=-1.75*\l, yshift=-1.5*\l]
   \draw[fill] (0,0) node (b0){$x$}; 
  \draw[fill] (\l,0) node (b1){$y$};
   \draw[fill] (1.5*\l,0.75*\l)  node (b2){$y$};
   \draw[fill] (0.5*\l,1.5*\l) node (b3){$y$};
   \draw[fill] (-0.5*\l,0.75*\l) node (b4){$y$};

   \draw[<-] (b1)--node[below]{$f$}(b0);
   \draw[double] (b1)--(b2);
   \draw[double] (b2)--(b3);
   \draw[double] (b3)--(b4);

    \draw[<-] (b4)--node[left](E2){}node[left](E3){$g$}(b0);
   \draw[double] (b1)--(b3);
   \draw[double] (b1)--(b4);

\draw[twoarrowlonger] (E3)--node[above, xshift=-0.5cm]{$\beta$}(b1);
   \end{scope}
 \end{tikzpicture}
\end{center}
By taking the pushout of the sum of all such maps, as $\alpha$ varies,
of the saturation anodyne extensions
$$\Delta[0]\star\eqDelta \to\Delta[0]\star\Delta[3]^\sharp$$
we obtain an anodyne extension
$$\Ntdelta_2(\cC)\hookrightarrow P_1.$$
This inclusion does not change the underlying simplicial set, and $P_1$ is obtained from $\Nnat_2(\cC)$ by marking (multiple times) all $2$-simplices witnessed by an invertible $2$-morphism of $\cC$ and such with degenerate $0$th face.

Let $P_2$ be the stratified simplicial set obtained from $P_1$ by identifying multiple marking of the same simplex. By \cite[Lemma B.4]{or}, the map $$\Ntdelta_2(\cC)\hookrightarrow P_2$$
is a retract of the previous anodyne extension $\Ntdelta_2(\cC)\hookrightarrow P_1$, and is therefore also an anodyne extension.
In particular, $P_2$ is obtained from $\Nnat_2(\cC)$ by marking (uniquely) all $2$-simplices whose $0$-th face is degenerate witnessed by an invertible $2$-morphism of $\cC$.

Given any $2$-isomorphism $\alpha\colon f \Rightarrow g_2g_1$ with $g_2$ a non-identity morphism, we can define a map
$$\Delta^2[3]'\to P_2$$
whose image is
\[{\scriptsize
\simpfver{g_1}{g_2}{\id}{f}{g_2g_1}{g_2}{\alpha}{\id_{g_2g_1}}{\alpha}\simpfvercontinued{\id_{g_2}}{x}{y}{z}{z}}
\]
By taking the pushout of the sum of all such maps, as $\alpha$ varies, of the sum of the thinness anodyne extensions
$$\Delta^2[3]'\to\Delta^2[3]''$$
we obtain an anodyne extension
$$P_2\hookrightarrow P_3.$$
This inclusion does not change the underlying simplicial set, and $P_3$ is a stratified simplicial set obtained from $P_2$ by marking (uniquely) all remaining $2$-simplices witnessed by an invertible $2$-morphism of $\cC$.

Given any $1$-equivalence $f\colon x \to y$ of $\cC$, there exists $g\colon y\to x$ such that $f\colon x\rightleftarrows y\colon g$ is an adjoint equivalence in $\cC$, with invertible unit $\eta\colon\id_x\Rightarrow g f$ and invertible counit $\epsilon\colon fg\Rightarrow\id_y$. In particular the relation
 $$f*\eta=(\epsilon*f)^{-1}\colon f\Rightarrow fgf$$
holds. For any $(f,g,\eta,\epsilon)$
we can consider the map
$$
\eqDelta \to P_3
$$
given by
\[
\simpfver{f}{g}{f}{f}{\id_x}{\id_y}{\id_f}{\eta}{\id_f}\simpfvercontinued{\epsilon^{-1}}{x}{y}{x}{y}.
\]
By taking the pushout of the sum of all such maps, as $(f,g,\eta,\epsilon)$ varies, the sum of the saturation anodyne extension
$$\eqDelta\to\Delta[3]^\sharp$$
we obtain an anodyne extension
$$P_3\hookrightarrow P_4.$$
This inclusion does not change the underlying simplicial set, and $P_4$ is obtained from $P_3$ by marking (multiple times) all $1$-simplices witnessed by a $1$-equivalence of $\cC$.

Observe that $\Nnat_2(\cC)$ is a quotient of $P_4$, obtained by identifying all the markings of $f$ coming from $(f,g, \eta, \epsilon)$.  
Using a variant of \cite[Lemma B.4]{or}, the map $P_3\hookrightarrow \Nnat_2(\cC)$
is a retract of the previous anodyne extension $P_3\hookrightarrow P_4$, and is therefore also an anodyne extension, as desired.

One can now obtain the comparison of nerves as the composite of anodyne extensions
\[\Ntdelta_2(\cC)\hookrightarrow P_2\hookrightarrow P_3\hookrightarrow\Nnat_2(\cC).\qedhere\]
\end{proof}

 For any $t\Delta$-set $X$, one can write down a canonical map between the two $2$-categorifications,
  $$\honat_2(X)\to\hotdelta_2(X).$$
 This map is not in general a biequivalence, and here is an example.
 
 \begin{rmk}
 Let $X$ be a stratified set obtained by taking two copies of $\Delta[2]_t$ and identifying the boundaries $\partial\Delta[2]$. 
 Then one can check that the Roberts--Street $2$-categorification of $X$ is isomorphic to $[2]$, and the natural $2$-categorification of $X$ is isomorphic to two copies of $\mathbb O_2[2]$ of which we identify the boundaries (i.e., the underlying $1$-categories). In particular, the comparison map $\honat_2(X)\to\hotdelta_2(X)$ is not faithful on the hom-categories between $0$ and $2$.
 \end{rmk}

\section{Proof of \cref{Lackistransferred}}
 \label{appendix1}
The aim of this section if to prove \cref{Lackistransferred}. To this end, we will first show that the natural nerve-categorification adjunction is a Quillen pair, and use this fact to deduce that the model structure for $(\infty,2)$-categories can be transferred along the natural nerve $\Nnat_2$. Finally, we will show that the said transferred model structure coincides with Lack's model structure.
 
 \begin{prop}
 \label{naturaladjunctionQuillen}
 The adjunction $\honat_2\colon \psh{t\Delta}\rightleftarrows \twocat :\Nnat_2$
 is a Quillen pair.
 \end{prop}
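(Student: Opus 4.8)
The plan is to show directly that the left adjoint $\honat_2$ preserves cofibrations and acyclic cofibrations, which by \cref{modelstructure2cat} is all that is needed. Since the model structure on $\psh{t\Delta}$ is cofibrantly generated with generating cofibrations the boundary and marking inclusions $\partial\Delta[m]\to\Delta[m]$ and $\Delta[m]\to\Delta[m]_t$, and since (by the construction of the model structure in \cite{or}) its trivial cofibrations are exactly the anodyne extensions, i.e.\ the saturated class generated by the elementary anodyne extensions of \cref{anodynemaps}, it suffices to verify: (i) $\honat_2$ sends every generating cofibration to a cofibration in Lack's model structure; and (ii) $\honat_2$ sends every elementary anodyne extension to an acyclic cofibration in Lack's model structure. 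Granting (i) and (ii), the facts that $\honat_2$ preserves colimits and that the acyclic cofibrations of $2\cat$ are closed under pushout, transfinite composition and retract force $\honat_2$ to send every anodyne extension, hence every trivial cofibration, to an acyclic cofibration, and the pair is then Quillen.

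For (i) I would compute $\honat_2$ on the generators from the defining co-$t\Delta$-object, using $\honat_2(\Delta[m])\cong\mathcal O_2[m]$, $\honat_2(\Delta[1]_t)\cong\mathbb E$, $\honat_2(\Delta[2]_t)\cong\mathbb O_2[2]$ and $\honat_2(\Delta[m]_t)\cong\mathcal O_2[m]$ for $m\ge3$. In low dimensions the resulting maps are literally generators of $2\cat$ or finite cell complexes of them: $\honat_2(\partial\Delta[1]\to\Delta[1])$ is $[0]\amalg[0]\hookrightarrow[1]$; $\honat_2(\partial\Delta[2]\to\Delta[2])$ glues a single $2$-cell, i.e.\ is a pushout of $\Sigma([0]\amalg[0])\to\Sigma[1]$; $\honat_2(\partial\Delta[3]\to\Delta[3])$ imposes the pasting relation of the third oriental, i.e.\ is a pushout of $\Sigma[\rightrightarrows]\to\Sigma[1]$; and $\honat_2(\Delta[1]\to\Delta[1]_t)=[1]\hookrightarrow\mathbb E$, $\honat_2(\Delta[2]\to\Delta[2]_t)=\mathcal O_2[2]\hookrightarrow\mathbb O_2[2]$ are seen to be cofibrations by presenting $\mathbb E$ and $\mathbb O_2[2]\cong\mathcal O_2[2]\aamalg{\Sigma[1]}\Sigma\mathbb I$ as cell complexes, using that $\Sigma$ carries cofibrations of $\cat$ to cofibrations of $2\cat$. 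For $m\ge4$ the Street--Duskin nerve, hence $\Nnat_2$, is $3$-coskeletal, so $\honat_2$ factors through the $3$-skeleton and both $\honat_2(\partial\Delta[m]\to\Delta[m])$ and $\honat_2(\Delta[m]\to\Delta[m]_t)$ are identities.

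For (ii) I would go through the four families. The triviality extensions $\Delta[l]\to\Delta[l]_t$ with $l\ge3$ become identities. The complicial horn $\Lambda^k[1]\to\Delta^k[1]$ becomes the generating acyclic cofibration $[0]\hookrightarrow\mathbb E$, and the inner horn $\Lambda^1[2]\to\Delta^1[2]$ becomes $[2]\hookrightarrow\mathbb O_2[2]$, which is a cofibration and a homwise equivalence (the nontrivial hom-category becomes a contractible groupoid), hence a biequivalence; the remaining horn and thinness extensions in dimension $\le3$ I would identify by hand, each time writing $\honat_2$ of the map as a finite cell complex that is moreover a biequivalence, typically because it only freely adjoins data to a $1$-morphism that is already an equivalence, or to a $2$-cell that is already invertible, in the source. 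Dimensions $\ge4$ reduce again by $3$-coskeletality. For the saturation extensions $\Delta[l]\star\eqDelta\to\Delta[l]\star\Delta[3]^\sharp$, one checks that $\honat_2$ of the map freely adjoins coherent adjoint-equivalence data to finitely many $1$-morphisms which are already equivalences in $\honat_2(\Delta[l]\star\eqDelta)$ — this last point, forced by the marked $2$- and $3$-simplices of $\eqDelta$, is the combinatorial core — and concludes by the auxiliary lemma that for an equivalence $f$ in a $2$-category $Y$ the pushout $Y\to Y\aamalg{[1]}\mathbb E$ along $f$ is a cofibration and a biequivalence.

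The step I expect to be the main obstacle is (ii): computing $\honat_2$ explicitly on the multiply-marked simplicial sets $\Delta^k[m]$, $\Delta^k[m]''$ and $\Delta[l]\star\eqDelta$ — for which the explicit presentation of $\honat_2$ on nerves developed later (see \cref{prooflemma}) is the natural tool — and then certifying that the resulting $2$-functors are cofibrations and biequivalences using Lack's description of the cofibrations via his four generators. The saturation case carries the most weight, since it relies on the auxiliary lemma that adjoining an adjoint-equivalence inverse to a $1$-morphism that is already an equivalence preserves the biequivalence type, and on a careful use of $3$-coskeletality so that only finitely many verifications in dimension $\le3$ remain.
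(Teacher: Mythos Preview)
Your overall strategy --- check that $\honat_2$ sends generating cofibrations to cofibrations and elementary anodyne extensions to acyclic cofibrations --- is exactly the paper's approach. The difference, and the genuine gap in your argument, is the sentence ``its trivial cofibrations are exactly the anodyne extensions''. This is not established in \cite{or}, and the paper explicitly notes later (proof of \cref{existencetransferred}) that an explicit description of the generating acyclic cofibrations is \emph{not} known. In model structures of this Cisinski type, the anodyne extensions are always trivial cofibrations, but the reverse inclusion can fail. So your deduction ``(i) $+$ (ii) $\Rightarrow$ left Quillen'' does not go through as stated. The paper closes this gap by invoking \cite[Prop.~2.4.40]{CisinskiBook}, which says precisely that preserving cofibrations and a pseudo-generating set of trivial cofibrations suffices; alternatively one can argue directly using that every object of $2\cat$ is fibrant, so that it is enough to check that $\Nnat_2$ preserves fibrations between fibrant objects, which \emph{are} characterised by the right lifting property against anodyne extensions.

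On the case analysis itself: your computations in (i) are correct, and your $3$-coskeletality argument for $m\ge4$ is a valid alternative to the paper's route via \cite[Lemma~5.1]{StreetOrientedSimplexes}. In (ii), however, you work harder than necessary. For the thinness extensions and the saturation extensions the paper observes that $\honat_2$ sends them to \emph{isomorphisms}, not merely biequivalences: the thinness cases follow from the two-out-of-three property for adjoint equivalences (in dimension $2$) and for $2$-isomorphisms (in dimension $3$), while the saturation cases follow from the two-out-of-six property. Your ``auxiliary lemma'' about pushing out along $[1]\hookrightarrow\mathbb E$ is therefore unnecessary --- the adjoint-equivalence data you propose to freely adjoin is already present, so nothing new is glued on. Recognising these as isomorphisms both shortens the argument and sidesteps the delicate cofibrancy checks you anticipate.
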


 \begin{proof}
 We show in \cref{lemmacof,lemmaanodyne} that $\honat_2$ sends generating cofibrations to cofibrations and elementary anodyne extensions to acyclic cofibrations. We can then conclude by \cite[Prop. 2.4.40]{CisinskiBook} that $\honat_2$ is a left Quillen functor, as desired.
 \end{proof}

 \begin{lem}
 \label{lemmacof}
 The functor $\honat_2\colon \psh{t\Delta}\to\twocat$ sends generating cofibrations to cofibrations.
 \end{lem}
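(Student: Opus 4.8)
The plan is to check the claim on each generating cofibration of $\psh{t\Delta}$, namely the boundary inclusions $\partial\Delta[m]\hookrightarrow\Delta[m]$ for $m\ge 0$ and the marking maps $\Delta[m]\hookrightarrow\Delta[m]_t$ for $m\ge 1$, exhibiting the image of each under $\honat_2$ as a cofibration built from the generating cofibrations of $\twocat$ listed in \cref{modelstructure2cat}. The first thing I would record is the behaviour of $\honat_2$ on the relevant representables: being the colimit-preserving extension of the co-$t\Delta$-object defining $\Nnat_2$, it sends $\Delta[m]$ to $\mathcal O_2[m]$, sends $\Delta[1]_t$ to $\mathbb E$ and $\Delta[2]_t$ to $\mathbb O_2[2]$, and sends $\Delta[m]_t$ to $\mathcal O_2[m]$ for $m\ge 3$ with the marking map sent to the identity. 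More generally, for a minimally marked simplicial set $X$ the $2$-category $\honat_2(X)$ is freely generated by the $0$-simplices, non-degenerate $1$-simplices and non-degenerate $2$-simplices of $X$ (a non-degenerate $2$-simplex $\sigma$ generating a $2$-morphism $d_1\sigma\Rightarrow d_0\sigma\circ d_2\sigma$) subject to one relation between parallel composite $2$-morphisms for each non-degenerate $3$-simplex; simplices of dimension $\ge 4$ contribute nothing, since the cells of the orientals $\mathcal O[k]$ in dimensions $\ge 4$ are collapsed by the $2$-truncation without imposing further relations.

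From this, $\honat_2$ sends $\Delta[m]\hookrightarrow\Delta[m]_t$ to an identity for all $m\ge 3$ and $\partial\Delta[m]\hookrightarrow\Delta[m]$ to an isomorphism for all $m\ge 4$ (there $\partial\Delta[m]$ and $\Delta[m]$ carry the same generating data), so both are cofibrations, and only the generating cofibrations $\partial\Delta[m]\hookrightarrow\Delta[m]$ for $m\le 3$ and $\Delta[m]\hookrightarrow\Delta[m]_t$ for $m\in\{1,2\}$ remain to be treated, which I would do by hand. The maps $\honat_2(\partial\Delta[0]\hookrightarrow\Delta[0])$ and $\honat_2(\partial\Delta[1]\hookrightarrow\Delta[1])$ are the generating cofibrations $[-1]\hookrightarrow[0]$ and $[0]\amalg[0]\hookrightarrow[1]$. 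The map $\honat_2(\partial\Delta[2]\hookrightarrow\Delta[2])$ includes the free $2$-category on the graph underlying $\partial\Delta[2]$ into $\mathcal O_2[2]$ by freely adjoining its single $2$-cell, so it is a pushout of $\Sigma([0]\amalg[0])\hookrightarrow\Sigma[1]$. The map $\honat_2(\partial\Delta[3]\hookrightarrow\Delta[3])$ goes from the free $2$-category on the four vertices, six edges and four triangular $2$-cells of $\partial\Delta[3]$ to its quotient $\mathcal O_2[3]$ by the single relation identifying the source and target of the unique non-degenerate $3$-cell of $\mathcal O[3]$, which form a parallel pair of $2$-morphisms, so it is a pushout of $\Sigma[\rightrightarrows]\to\Sigma[1]$. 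The map $\honat_2(\Delta[2]\hookrightarrow\Delta[2]_t)$ is the canonical inclusion $\mathcal O_2[2]\hookrightarrow\mathbb O_2[2]\cong\mathcal O_2[2]\aamalg{\Sigma[1]}\Sigma\mathbb I$, hence a pushout of $\Sigma[1]\hookrightarrow\Sigma\mathbb I$; and $\Sigma[1]\hookrightarrow\Sigma\mathbb I$ is a cofibration since $\Sigma\mathbb I$ arises from $\Sigma[1]$ by freely adjoining a formal inverse to its $2$-cell and then imposing the two invertibility relations, i.e.\ by pushouts of $\Sigma([0]\amalg[0])\hookrightarrow\Sigma[1]$ and $\Sigma[\rightrightarrows]\to\Sigma[1]$. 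Finally $\honat_2(\Delta[1]\hookrightarrow\Delta[1]_t)$ is the canonical inclusion $[1]\hookrightarrow\mathbb E$, which is a cofibration because $\mathbb E$ is obtained from $[1]$ by freely adjoining the $1$-morphism $g$ and the $2$-morphisms $\eta,\epsilon$ (together, if convenient, with their formal inverses) and then imposing finitely many relations between parallel $2$-morphisms, i.e.\ by a finite composite of pushouts of $[0]\amalg[0]\hookrightarrow[1]$, $\Sigma([0]\amalg[0])\hookrightarrow\Sigma[1]$ and $\Sigma[\rightrightarrows]\to\Sigma[1]$.

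The hard part is the structural description of $\honat_2$ from the first paragraph: one must compute $\honat_2$ on the low-dimensional (pre)stratified sets carefully enough to be certain that passing from $\honat_2(\partial\Delta[3])$ to $\mathcal O_2[3]$, from $\Sigma[1]$ to $\Sigma\mathbb I$, and from $[1]$ to $\mathbb E$ never involves anything beyond freely adjoining cells and collapsing parallel pairs of cells --- in particular that the $2$-truncation of $\mathcal O[3]$ imposes exactly one relation, that this relation equates a parallel pair of $2$-morphisms, and that the orientals in dimension $\ge 4$ do not secretly contribute relations among $2$-morphisms. With those explicit identifications in hand, each of the individual verifications above is routine.
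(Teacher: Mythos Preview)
Your proposal is correct and follows essentially the same case-by-case approach as the paper: you verify the image of each generating cofibration is built from the generating cofibrations of Lack's model structure, with the same identifications in low dimensions and the same triviality in high dimensions.

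The only noteworthy differences are in the justifications at a few points. For $[1]\hookrightarrow\mathbb E$ the paper simply invokes Lack's characterisation of cofibrations (free underlying $1$-category), whereas you build $\mathbb E$ explicitly from $[1]$ by cell attachments; both work. For $\partial\Delta[m]\hookrightarrow\Delta[m]$ with $m\ge 4$ the paper deduces that the image is an isomorphism from the relation $\honat_2\circ(-)^\flat\cong\hostreet_2$ together with Street's result that $\hostreet(\partial\Delta[m])\hookrightarrow\hostreet(\Delta[m])$ is the inclusion of the underlying $(m-1)$-category of $\mathcal O[m]$, hence an isomorphism on underlying $3$-categories; this is precisely the content of the ``hard part'' you flag at the end, so your caution there is warranted and the paper's citation gives the clean way to discharge it. Finally, for $\Delta[2]\hookrightarrow\Delta[2]_t$ you are more careful than the paper in checking that $\Sigma[1]\hookrightarrow\Sigma\mathbb I$ is itself a cofibration rather than just asserting it.
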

  
 \begin{proof}
 We show that $\honat_2$ sends all types of generating cofibrations to cofibrations.
   \begin{itemize}[leftmargin=*]
     \item The functor $\honat_2$ sends the canonical inclusion $\partial\Delta[0]\hookrightarrow\Delta[0]$ to the canonical inclusion $[-1]\hookrightarrow[0]$, which
     is a generating cofibration;
          \item The functor $\honat_2$ sends the canonical inclusion $\partial\Delta[1]\hookrightarrow\Delta[1]$ to the canonical inclusion $[0]\amalg[0]\hookrightarrow[1]$, which
     is a generating cofibration;
          \item The functor $\honat_2$ sends the canonical inclusion $\partial\Delta[2]\hookrightarrow\Delta[2]$ to a pushout of $[1]=\Sigma[0]\hookrightarrow\Sigma{[1]}$; this map is a composite of the canonical inclusion $[1]=\Sigma[0]\hookrightarrow \Sigma([0]\amalg[0])$, which is a pushout of the generating cofibration $[0]\amalg[0]\hookrightarrow[1]$ along itself, and of the canonical inclusion $\Sigma([0]\amalg[0])\hookrightarrow\Sigma{[1]}$, which is a generating cofibration.
            \item The functor $\honat_2$ sends the canonical inclusion $\partial\Delta[3]\hookrightarrow\Delta[3]$ to a pushout of the generating acyclic cofibration $\Sigma[\rightrightarrows]\longrightarrow\Sigma{[1]}$ along the map \linebreak $\Sigma[\rightrightarrows]\to\honat_2(\partial\Delta[3])$ whose image is given by the following two composite $2$-morphisms.
            \[
            \simpfbdrya{f_{01}}{}{f_{23}}{}{}{}{\alpha_{023}}{\alpha_{012}}{\alpha_{013}}\simpfbdryacontinued{\alpha_{123}}
            \]
                    \item The functor $\honat_2$ sends the canonical inclusion $\partial\Delta[m]\hookrightarrow\Delta[m]$ for $m\ge4$ to the identity of $\mathcal O_2[m]$.
                    To see this we first observe that, by comparing their right adjoints or by evaluating at representables, we find the relation of left adjoint functors
                    $$\honat_2\circ(-)^{\flat}\cong\hostreet_2\colon\sset\to 2\cat,$$
                    where
                    $(-)^\flat\colon\psh{\Delta}\to\psh{t\Delta}$ denotes the minimal stratification of a simplicial set (in which only degenerate simplices are marked).
                    As a consequence, $\honat_2$ sends the canonical inclusion $\partial\Delta[m]^\flat\hookrightarrow\Delta[m]^\flat$ for $m\ge4$ to $\hostreet_2(\partial\Delta[m])\to\hostreet_2(\Delta[m])$, namely the $2$-truncation of the canonical map of $\omega$-categories $\hostreet(\partial\Delta[m])\to\hostreet(\Delta[m])$. By \cite[Lemma 5.1]{StreetOrientedSimplexes}, this map is the inclusion of the underlying $(m-1)$-category of $\cO[m]$ (by \cite[Lemma 5.1]{StreetOrientedSimplexes}) into $\mathcal O[m]$, and in particular an isomorphism on the underlying $3$-categories.
                    Since the $2$-truncation construction only involves the underlying $3$-category of an $\omega$-category (cf. \cite[\textsection 6]{LMW}), the $2$-truncation of the considered map is therefore an isomorphism of $2$-categories, as desired.
          \item The functor $\honat_2$ sends the canonical inclusion $\Delta[1]\hookrightarrow\Delta[1]_t$ to the canonical inclusion $[1]\hookrightarrow \mathbb E$, which is easily seen to be a cofibration using Lack's characterization of cofibrations \cite[Prop.4.14]{lack1}.
          \item The functor $\honat_2$ sends the canonical inclusion $\Delta[2]\hookrightarrow\Delta[2]_t$ to the canonical inclusion $\mathcal O_2[2]\hookrightarrow\mathbb O_2[2]$, which is
          a pushout of the generating cofibration $\Sigma{[1]}\hookrightarrow\Sigma{\mathbb I}$.
      \item The functor $\honat_2$ sends by definition the canonical inclusion $\Delta[m]\hookrightarrow\Delta[m]_t$ for $m\ge 3$ to the identity of $\mathcal O_2[m]$.\qedhere
      \end{itemize}
\end{proof}

\begin{lem}
\label{lemmaanodyne}
 The functor $\honat_2\colon \psh{t\Delta}\to\twocat$ sends all elementary anodyne extensions to acyclic cofibrations.
\end{lem}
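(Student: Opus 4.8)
The plan is to reduce the statement to showing that $\honat_2$ sends each elementary anodyne extension to a \emph{biequivalence}. Indeed, every elementary anodyne extension is a monomorphism of $t\Delta$-sets: the complicial horn extensions are regular inclusions, and the thinness, triviality and saturation extensions are identities on the underlying simplicial set that merely enlarge the marking. Since $\honat_2$ is a left adjoint it preserves colimits and retracts, so \cref{lemmacof} already implies that it sends every cofibration of $\psh{t\Delta}$, i.e.\ every monomorphism, to a cofibration in Lack's model structure. Thus it remains only to check that the image of each elementary anodyne extension is a weak equivalence, i.e.\ a biequivalence.

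For the triviality extensions $\Delta[l]\to\Delta[l]_t$ with $l>2$ there is nothing to do: by construction of the co-$t\Delta$-object defining $\honat_2$ these are sent to the identity of $\mathcal O_2[l]$. For the horn and thinness extensions $\Lambda^k[m]\to\Delta^k[m]$ and $\Delta^k[m]'\to\Delta^k[m]''$ with $m\ge 4$, I would argue exactly as in the last bullet of the proof of \cref{lemmacof}: the extra top-dimensional simplices only affect the underlying $\omega$-category above dimension $3$, so by \cite[Lemma~5.1]{StreetOrientedSimplexes} and the fact that $2$-truncation only sees the underlying $3$-category, $\honat_2$ sends these maps to isomorphisms. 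Finally, for $m=1$ the horn extensions $\Lambda^k[1]\to\Delta^k[1]$ are, after unwinding \cref{preliminarynotation}, precisely the generating acyclic cofibration $[0]\hookrightarrow\mathbb E$ of \cref{modelstructure2cat}, since $\Delta^k[1]=\Delta[1]_t$ and $\honat_2(\Delta[1]_t)=\mathbb E$.

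The genuine work is in the cases $m=2,3$. The strategy is to compute $\honat_2$ explicitly on the complexes involved using the pushout presentations of $\mathbb E$, of $\mathbb O_2[2]\cong\mathcal O_2[2]\aamalg{\Sigma[1]}\Sigma\mathbb I$, and of $\mathcal O_2[m]$, keeping careful track of which $1$- and $2$-simplices are marked; in each case the resulting $2$-functor is bijective on objects and surjective on $1$-morphisms, and the content is that the cells and relations added in passing from source to target are \emph{redundant} once the marked cells are forced to be (weakly) invertible. A thinness extension adjoins a strict inverse to a $2$-cell which is already invertible --- either because of the marked $2$-simplex of $\Delta^k[2]$, or, for $m=3$, because the $3$-simplex relation carried by $\mathcal O_2[3]$ expresses any one of the four boundary $2$-cells in terms of the other three, so that invertibility of three of them (forced by $\Delta^k[3]'$) yields invertibility of the fourth --- and so, as in the second proof of \cref{counitiso1}, the adjoined inverse is forced to agree with the existing one and the map is in fact an isomorphism of $2$-categories. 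A horn extension $\Lambda^k[2]\to\Delta^k[2]$ glues on a composite $1$-morphism together with a marked $2$-cell witnessing the composition, so the new $1$-morphism is $2$-isomorphic to a pasting of morphisms already present, and one checks the comparison is bijective on objects and a homwise equivalence. For the saturation extensions $\Delta[l]\star\eqDelta\to\Delta[l]\star\Delta[3]^\sharp$: in the join every simplex meeting the $\Delta[l]$-factor non-degenerately is unmarked in both, so the two $t\Delta$-sets differ only in the marking of the four edges $[01],[12],[23],[03]$ of the $\Delta[3]$-factor; hence $\honat_2$ sends this map to the result of gluing a free adjoint equivalence onto those four $1$-morphisms of $\honat_2(\Delta[l]\star\eqDelta)$. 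Since $\eqDelta$ carries the marked edges $[02],[13]$, all marked $2$-simplices, and the $3$-simplex $[0123]$, a short algebraic argument with the $3$-simplex relation --- using that a $1$-morphism with both a left and a right inverse up to equivalence is itself an equivalence --- shows that all six edges of the $\Delta[3]$-part are already equivalences in $\honat_2(\Delta[l]\star\eqDelta)$, so the four adjoined copies of $\mathbb E$ are redundant and the map is a biequivalence.

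The main obstacle I anticipate is the bookkeeping in the $m=2,3$ horn and outer thinness cases: the hom-categories of $\honat_2$ of the relevant complexes contain glued copies of $\mathbb E$ and are therefore a priori large, so one must pin them down precisely enough to certify that the comparison $2$-functors are homwise \emph{equivalences} of categories rather than merely surjective on objects and morphisms. A clean way to organise this --- and to avoid ad hoc hom-category computations --- is to factor each such image, in $2\cat$, as a (transfinite) composite of pushouts of the two generating acyclic cofibrations $[0]\hookrightarrow\mathbb E$ and $[1]=\Sigma[0]\hookrightarrow\Sigma\mathbb I$ of \cref{modelstructure2cat}, thereby reducing everything to Lack's known acyclic cofibrations; constructing these factorisations explicitly is where the bulk of the combinatorial effort will go.
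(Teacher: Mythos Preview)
Your overall approach---reduce to showing biequivalence, then treat the elementary anodyne extensions case by case---is the same as the paper's, and your handling of the triviality extensions, the $m=1$ horns, and the $m\ge5$ horns is fine. However, three of your case analyses contain errors that would cause the argument to fail as written.

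\emph{Thinness at $m=2$.} The extension $\Delta^k[2]'\to\Delta^k[2]''$ marks the $k$-th \emph{face}, which for $m=2$ is a $1$-simplex, not a $2$-simplex. Under $\honat_2$ this glues a copy of $\mathbb E$ along that edge, not a strict inverse to a $2$-cell. The relevant fact is two-out-of-three for adjoint equivalences: the other two edges already carry $\mathbb E$-data (from $\Delta^k[2]'$) and the top $2$-cell is invertible, so the third edge is forced to be an equivalence. Your description applies only to the $m=3$ thinness case.

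\emph{Horns at $m=4$.} Your appeal to the argument of \cref{lemmacof} does not work here: $\Lambda^k[4]\hookrightarrow\Delta^k[4]$ first fills a missing $3$-face, and $\honat_2(\partial\Delta[3]\hookrightarrow\Delta[3])$ is \emph{not} an isomorphism (it is a pushout of $\Sigma[\rightrightarrows]\to\Sigma[1]$). One has to check separately that the relation imposed by that missing $3$-simplex already holds in $\honat_2(\Lambda^k[4])$; the paper treats $m=4$ as its own case for exactly this reason.

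\emph{Saturation for $l\ge0$.} Your claim that the two joins differ only in the four edges is false. With the join marking convention, $\sigma\star\tau$ is marked whenever either factor is; so for each vertex $v$ of $\Delta[l]$ and each of the four unmarked edges $e$ of $\eqDelta$, the $2$-simplex $v\star e$ is unmarked in $\Delta[l]\star\eqDelta$ but marked in $\Delta[l]\star\Delta[3]^\sharp$. These extra $2$-simplices are why the paper invokes two-out-of-six for $2$-isomorphisms at $l=0$, and your argument misses them entirely.

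Your final-paragraph strategy---factoring each image as a composite of pushouts of $[0]\hookrightarrow\mathbb E$ and $[1]\hookrightarrow\Sigma\mathbb I$---is the right repair and is essentially what the paper does throughout; but you should also be aware that the outer horns at $m=3$ require an extra ingredient (whiskering by an equivalence reflects invertibility of $2$-cells), which the paper isolates as \cref{InverseModEqui}.
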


The following $2$-categorical fact is handy for the verifications regarding the outer complicial anodyne extensions.

 \begin{lem}
 \label{InverseModEqui}
          Let $\cC$ be any $2$-category, and $h\colon y\to z$ an equivalence. A $2$-morphism $\alpha \colon f\Rightarrow g$ such that $h*\alpha$ is invertible is itself invertible.
          \end{lem}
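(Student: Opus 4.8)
The plan is to realize $h*\alpha$ as the image of $\alpha$ under a functor between hom-categories that is an equivalence, hence conservative. Write $f,g\colon x\to y$ for the parallel source and target of $\alpha$, so that $\alpha$ is a morphism in the hom-category $\Map_{\cC}(x,y)$ and $h*\alpha$ is the value at $\alpha$ of the post-composition functor $h\circ(-)\colon\Map_{\cC}(x,y)\to\Map_{\cC}(x,z)$. It therefore suffices to show that $h\circ(-)$ reflects isomorphisms.

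First I would upgrade the equivalence $h\colon y\to z$ to a choice of $1$-morphism $k\colon z\to y$ together with invertible $2$-morphisms $\mu\colon kh\Rightarrow\id_y$ and $\nu\colon hk\Rightarrow\id_z$. Whiskering is functorial (interchange law), so $k\circ(-)\colon\Map_{\cC}(x,z)\to\Map_{\cC}(x,y)$ is a functor, and whiskering $\mu$, resp.\ $\nu$, onto $1$-morphisms $x\to y$, resp.\ $x\to z$, produces natural isomorphisms $(k\circ(-))\circ(h\circ(-))\cong\id$ and $(h\circ(-))\circ(k\circ(-))\cong\id$; here strictness of $\cC$ makes $(kh)\circ p=k\circ(h\circ p)$ and $\id_y\circ p=p$ literally, so there is nothing coherent to track. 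Hence $h\circ(-)$ is an equivalence of categories, in particular fully faithful, hence conservative. Since $h*\alpha$ is invertible in $\Map_{\cC}(x,z)$, $\alpha$ is invertible in $\Map_{\cC}(x,y)$, which is the claim.

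Unwinding the naturality of $\mu$ against $\alpha$ gives the explicit identity $\alpha\circ(\mu*f)=(\mu*g)\circ(k*(h*\alpha))$ in $\Map_{\cC}(x,y)$, whence $\alpha=(\mu*g)\circ(k*(h*\alpha))\circ(\mu*f)^{-1}$ is a vertical composite of invertible $2$-morphisms, with inverse $\alpha^{-1}=(\mu*f)\circ(k*(h*\alpha)^{-1})\circ(\mu*g)^{-1}$. I do not anticipate a real obstacle here; the only thing to be careful about is the bookkeeping of the interchange law and the direction of the whiskerings, together with the observation that, $\cC$ being a strict $2$-category, no associativity or unit coherence isomorphisms intervene.
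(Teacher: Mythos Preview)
Your argument is correct. The paper actually states this lemma without proof, presenting it as a standard $2$-categorical fact ``handy for the verifications regarding the outer complicial anodyne extensions,'' so there is no proof in the paper to compare against; your approach via conservativity of the post-composition equivalence $h\circ(-)\colon\Map_{\cC}(x,y)\to\Map_{\cC}(x,z)$ is exactly the expected one, and the explicit inverse you extract is correct.
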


\begin{proof}[Proof of \cref{lemmaanodyne}]
      We now show that $\honat_2$ sends all types of elementary anodyne extensions to acyclic cofibrations.
   \begin{itemize}[leftmargin=*]
        \item The functor $\honat_2$ sends the complicial anodyne extensions $\Lambda^k[1]\to\Delta^k[1]$ for $k=0,1$ to $[0]\hookrightarrow\mathbb E$, which is a generating acyclic cofibration.
            \item The functor $\honat_2$ sends the complicial horn anodyne extension $\Lambda^1[2]\hookrightarrow\Delta^1[2]=\Delta[2]_t$ to the canonical inclusion $[2]\hookrightarrow\mathbb O_2[2]$, which is a pushout of the generating acyclic cofibration $[1]\to\Sigma{\mathbb I}$.
       \item The functor $\honat_2$ sends the complicial anodyne extensions $\Lambda^k[2]\to\Delta^k[2]$ for $k=0,2$ to a pushout of the generating acyclic cofibration $[1]\to\Sigma{\mathbb I}$.
         \item The functor $\honat_2$ sends the complicial anodyne extensions $\Lambda^k[3]\to\Delta^k[3]$ for $k=1,2$ to an isomorphism of a pushout of the form $\cO_2[3]\amalg_{\cO_2[2]}\mathbb O_2[2]$.
               \item The functor $\honat_2$ sends the complicial anodyne extensions $\Lambda^k[3]\to\Delta^k[3]$ for $k=0,3$ to an isomorphism of a pushout of the form $\cO_2[3]\amalg_{\cO_2[2]}\mathbb O_2[2]\amalg_{\cO_2[2]}\mathbb O_2[2]\amalg_{\Delta[1]} \mathbb{E}$.
                \item The functor $\honat_2$ sends the complicial anodyne extensions $\Lambda^k[4]\to\Delta^k[4]$ to an isomorphism, since the relation that defines $\honat_2(\Delta^k[m])$ starting from $\honat_2(\Lambda^k[m])$ in fact already holds.
      \item The functor $\honat_2$ sends the complicial anodyne extensions $\Lambda^k[m]\to\Delta^k[m]$ for $m\ge5$ and $0\le k\le m$ 
      to an isomorphism.
      Indeed, $\Delta^k[m]$ can be obtained from $\Lambda^k[m]$ by filling the boundary an $(m-1)$-simplex to an $(m-1)$-simplex, then by filling the boundary an $m$-simplex to an $m$-simplex and by finally marking the top newly added simplex. In particular, this means that the inclusion  $\Lambda^k[m]\hookrightarrow\Delta^k[m]$ is a composite of a pushout of $\partial\Delta[m-1]\hookrightarrow\Delta[m-1]$, followed by a pushout of $\partial\Delta[m]\hookrightarrow\Delta[m]$, followed by a pushout of $\Delta[m]\hookrightarrow\Delta[m]_t$, and by previous considerations these types of maps are all sent to identities by $\honat_2$.
       \item The functor $\honat_2$ sends the thinness anodyne extensions $\Delta^k[2]'\to\Delta^k[2]''$  $k=0,1,2$ to an isomorphism, which can be seen using the fact adjoint equivalences satisfy the two-out-of-three property.
                        \item The functor $\honat_2$ sends the thinness anodyne extensions $\Delta^k[3]'\to\Delta^k[3]''$ for $1\le k\le m$ to an isomorphism, which can be seen using the fact that $2$-isomorphisms satisfy the two-out-of-three property.
      \item The functor $\honat_2$ sends the thinness anodyne extensions $\Delta^k[m]'\to\Delta^k[m]''$ to an identity for $m\geq 4$. Indeed, $\Delta^k[m]''$ can be obtained from $\Delta^k[m]'$ by the marking of the $k$-th face. This means that the extension $\Delta^k[m]'\to\Delta^k[m]''$ is a pushout of $\Delta[m-1]\to \Delta[m-1]_t$,
      which is mapped by $\honat_2$ to an identity for $m\geq 4$. 
      \item As already pointed out, the functor $\honat_2$ sends the $m$-th triviality anodyne extension $\Delta[m]\hookrightarrow\Delta[m]_t$ for $m\ge 3$ to the identity of $\mathcal O_2[m]$.
      \item The functor $\honat_2$ sends the minus first saturation anodyne extension 
$\Delta[3]_{eq}  \to\Delta[3]^{\sharp}$ to an isomorphism, which can be seen using the fact that adjoint equivalences satisfy the two-out-of-six property.
      \item The functor $\honat_2$ sends the $0$-th saturation anodyne extension 
$\Delta[0]\star\Delta[3]_{eq}  \to\Delta[0]\star\Delta[3]^{\sharp}$ to an to an isomorphism, which can be seen using the fact that $2$-isomorphisms satisfy the two-out-of-six property.
            \item The functor $\honat_2$ sends the $l$-th saturation anodyne extension 
      $ \Delta[l]\star \Delta[3]_{eq}  \to \Delta[l]\star \Delta[3]^{\sharp}$
     for $l\ge2$ to an isomorphism, which can be seen inductively.\qedhere
      \end{itemize}
\end{proof}
 
  \begin{prop}
  \label{existencetransferred}
    The category $2\cat$ supports the right-transferred model structure of the Riehl--Verity model structure on $\psh{t\Delta}$ for $2$-complicial sets along the natural nerve $\Nnat_2\colon2\cat\to\psh{t\Delta}$. In this model structure, the fibrations and the weak equivalences are created by $\Nnat_2$ and the generating cofibrations are obtained by taking the image via $\honat_2$ of the generating cofibrations of the model structure for $(\infty,2)$-categories on $\psh{t\Delta}$.
 \end{prop}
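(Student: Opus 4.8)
The plan is to deduce \cref{existencetransferred} from the standard right-transfer criterion for cofibrantly generated model structures (see e.g.\ \cite{CisinskiBook}), applied to the adjunction $\honat_2\colon\psh{t\Delta}\rightleftarrows 2\cat\colon\Nnat_2$, where $\psh{t\Delta}$ carries the model structure for $2$-complicial sets. Concretely, that criterion asks for three things: that the model structure for $2$-complicial sets be cofibrantly generated, with some set $I$ of generating cofibrations and some set $J$ of generating acyclic cofibrations; that $2\cat$ permit the small object argument with respect to $\honat_2(I)$ and $\honat_2(J)$; and the \emph{acyclicity condition}, that every relative $\honat_2(J)$-cell complex (a transfinite composition of pushouts of coproducts of maps in $\honat_2(J)$) be sent by $\Nnat_2$ to a weak equivalence of $2$-complicial sets. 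The first input is \cite[Theorem~1.28]{or}. The second is immediate: $2\cat$ already carries the cofibrantly generated Lack model structure \cref{modelstructure2cat}, hence is locally presentable and has all objects small.

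The substantive point is the acyclicity condition, and this is where the work already done in this section is used. By \cref{naturaladjunctionQuillen}, $\honat_2\dashv\Nnat_2$ is a Quillen pair when $2\cat$ carries the Lack model structure, so $\honat_2$ sends the generating acyclic cofibrations $J$ to acyclic cofibrations of Lack's model structure. The class of acyclic cofibrations in any model category is closed under coproducts, pushouts, and transfinite composition, so every relative $\honat_2(J)$-cell complex is again an acyclic cofibration in Lack's model structure, in particular a biequivalence. Now I would invoke that all objects of $2\cat$ are fibrant in Lack's model structure \cref{modelstructure2cat}, together with the fact that $\Nnat_2$ is right Quillen \cref{naturaladjunctionQuillen}: by Ken Brown's lemma a right Quillen functor preserves weak equivalences between fibrant objects, and since here every object is fibrant, $\Nnat_2$ sends every biequivalence to a weak equivalence of $2$-complicial sets. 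Applying this to a relative $\honat_2(J)$-cell complex establishes the acyclicity condition.

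With the three inputs verified, the transfer criterion produces a cofibrantly generated model structure on $2\cat$ in which a map $f$ is a fibration, respectively a weak equivalence, precisely when $\Nnat_2(f)$ is a fibration, respectively a weak equivalence, of $2$-complicial sets, and in which $\honat_2(I)$ is a set of generating cofibrations; taking $I$ to be the explicit generators of \cite[Theorem~1.28]{or}, the boundary inclusions $\partial\Delta[m]\to\Delta[m]$ and the marking maps $\Delta[m]\to\Delta[m]_t$, and using that \cref{lemmacof} already identifies their images under $\honat_2$, gives the last assertion of the proposition.

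I do not expect a genuine obstacle here: the real content, namely that $\honat_2$ is left Quillen, is already in place via \cref{lemmacof,lemmaanodyne} and \cref{naturaladjunctionQuillen}. The one point demanding a little care is how to phrase the acyclicity condition, since the set $J$ is not given explicitly and a direct combinatorial analysis of relative $\honat_2(J)$-cell complexes would be awkward; the clean route, as above, is to route everything through the already-available Lack model structure and exploit that all of its objects are fibrant, so that transporting a weak equivalence across $\Nnat_2$ needs no cofibrant or fibrant replacement. It is also worth flagging that this argument uses only the existence of Lack's model structure and its all-objects-fibrant property, and in particular does not presuppose that the transferred structure coincides with Lack's, which is the separate assertion proved in the remainder of the section.
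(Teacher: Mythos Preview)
Your proof is essentially the same as the paper's: both invoke a standard transfer criterion, handle smallness via local presentability of $2\cat$, and verify the acyclicity condition by using \cref{naturaladjunctionQuillen} to see that relative $\honat_2(J)$-cell complexes are Lack acyclic cofibrations and then that $\Nnat_2$ preserves weak equivalences since all objects are fibrant. One small slip: carrying a cofibrantly generated model structure does not by itself imply local presentability (the implication runs the other way in the definition of a combinatorial model category); the paper simply cites \cite[\textsection 6]{LMW} for the local presentability of $2\cat$, and you should do the same.
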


 \begin{proof}
 The model structure for $2$-complicial sets is cofibrantly generated and the category $2\cat$ is complete and cocomplete. We now check that the conditions of the transfer theorem \cite[Theorem 11.3.2]{Hirschhorn} hold for the desired adjunction.
\begin{enumerate}[leftmargin=*]
\item[(1)] Since $\twocat$ is a locally presentable (e.g.\ in \cite[\textsection 6]{LMW}), we can use \cite[Prop.5.2.10]{BorceuxHandbook2} to conclude that every object is small with respect to some cardinal.
\item[(2)]
Although we do not know have an explicit description of the generating acyclic cofibrations of the model structure for $(\infty,2)$-categories,
since $\honat_2$ is left Quillen any (retract of) transfinite composition of push\-outs of images of generating acyclic cofibrations under $\honat_2$ is in 
particular an acyclic cofibration, and since all $2$-categories are fibrant it is sent by $\Nnat_2$ to a weak equivalence.\qedhere
\end{enumerate}
 \end{proof}

   \begin{prop}
 The right-transferred model structure on $2\cat$ coincides with Lack's model structure.
 \end{prop}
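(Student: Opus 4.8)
The plan is to verify that the right-transferred model structure of \cref{existencetransferred} and Lack's model structure of \cref{modelstructure2cat} have the same weak equivalences and the same cofibrations; since a model structure on a given category is determined by its weak equivalences together with its cofibrations (the fibrations being exactly the maps with the right lifting property against the acyclic cofibrations), this yields the claim.

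First I would compare the weak equivalences. By \cref{existencetransferred}, a $2$-functor $F$ is a weak equivalence in the transferred model structure precisely when $\Nnat_2(F)$ is a weak equivalence in the model structure for $(\infty,2)$-categories, so it suffices to show that $\Nnat_2$ both preserves and reflects biequivalences. Since $\Nnat_2$ is right Quillen by \cref{naturaladjunctionQuillen} and every $2$-category is fibrant in Lack's model structure, Ken Brown's lemma shows that $\Nnat_2$ sends biequivalences to weak equivalences. For the converse, suppose $\Nnat_2(F)$ is a weak equivalence, and consider the naturality square of the counit, which relates $F\colon\cC\to\cD$ to $\honat_2(\Nnat_2(F))$ through the maps $\epsilon^{\natural}_{\cC}$ and $\epsilon^{\natural}_{\cD}$. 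In the model structure for $(\infty,2)$-categories the cofibrations are the monomorphisms, so every object is cofibrant; hence $\honat_2(\Nnat_2(F))$ is a map between cofibrant objects, and since $\honat_2$ is left Quillen, Ken Brown's lemma shows it is a biequivalence. The vertical maps of the square are biequivalences by \cref{counitweakequivalence}, so two-out-of-three forces $F$ to be a biequivalence. Thus the two model structures have the same weak equivalences.

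Next I would compare the cofibrations. By \cref{existencetransferred} the transferred model structure is cofibrantly generated, with generating cofibrations the images under $\honat_2$ of the maps $\partial\Delta[m]\hookrightarrow\Delta[m]$ ($m\geq0$) and $\Delta[m]\hookrightarrow\Delta[m]_t$ ($m\geq1$), and \cref{lemmacof} shows that each of these images is a cofibration in Lack's model structure; as that class is closed under pushout, transfinite composition and retract, every transferred cofibration is a Lack cofibration. For the reverse inclusion it suffices, since Lack's model structure is cofibrantly generated, to place each of its four generating cofibrations in the saturation of the transferred generating cofibrations. The maps $[-1]\hookrightarrow[0]$ and $[0]\amalg[0]\hookrightarrow[1]$ are literally $\honat_2(\partial\Delta[0]\hookrightarrow\Delta[0])$ and $\honat_2(\partial\Delta[1]\hookrightarrow\Delta[1])$. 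For $\Sigma([0]\amalg[0])\hookrightarrow\Sigma[1]$, recall that $\honat_2(\partial\Delta[2])$ is the $2$-category freely generated by $1$-morphisms $f_{01},f_{12},f_{02}$ while $\honat_2(\Delta[2])=\cO_2[2]$ is obtained from it by adjoining a single $2$-cell between the parallel $1$-morphisms $f_{02}$ and $f_{12}f_{01}$; hence $\honat_2(\partial\Delta[2]\hookrightarrow\Delta[2])$ is the pushout of $\Sigma([0]\amalg[0])\hookrightarrow\Sigma[1]$ along the inclusion of $\Sigma([0]\amalg[0])$ as the full sub-$2$-category of $\honat_2(\partial\Delta[2])$ on the objects $\{0,2\}$. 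This sub-$2$-category is a retract of $\honat_2(\partial\Delta[2])$ over itself, by collapsing the object $1$ onto $0$ and $f_{01}$ onto an identity; pushing this retraction out along the inclusion exhibits $\Sigma([0]\amalg[0])\hookrightarrow\Sigma[1]$ as a retract of $\honat_2(\partial\Delta[2]\hookrightarrow\Delta[2])$, hence as a transferred cofibration. An entirely analogous collapsing retraction, now using that $\honat_2(\partial\Delta[3]\hookrightarrow\Delta[3])$ is the pushout of $\Sigma[\rightrightarrows]\hookrightarrow\Sigma[1]$ identifying the two pasting composites $f_{03}\Rrightarrow f_{23}f_{12}f_{01}$, exhibits $\Sigma[\rightrightarrows]\hookrightarrow\Sigma[1]$ as a retract of $\honat_2(\partial\Delta[3]\hookrightarrow\Delta[3])$. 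Therefore the two model structures also have the same cofibrations, and so coincide.

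I expect the main obstacle to be the comparison of weak equivalences, and more precisely the fact that $\Nnat_2$ reflects biequivalences: this is exactly where \cref{counitweakequivalence} is indispensable, and one must be careful that $\Nnat_2(\cC)$ is already cofibrant so that $\honat_2(\Nnat_2(\cC))$ computes the derived counit and Ken Brown's lemma applies to $\honat_2$. The retract arguments presenting the two suspension-type generating cofibrations of Lack's model structure as retracts of the $\honat_2$-images of $\partial\Delta[2]\hookrightarrow\Delta[2]$ and $\partial\Delta[3]\hookrightarrow\Delta[3]$ are elementary, but I would carry them out by writing the relevant $2$-categories and collapsing retractions down explicitly.
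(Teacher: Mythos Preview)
Your argument is correct, but it differs from the paper's in one notable respect: to conclude, you compare cofibrations \emph{and weak equivalences}, whereas the paper compares cofibrations \emph{and fibrant objects}, invoking the fact (e.g.\ \cite[Prop.~E.1.10]{JoyalVolumeII}) that a model structure is determined by its cofibrations together with its fibrant objects. Since every $2$-category is fibrant in Lack's model structure, and every $2$-category is fibrant in the transferred model structure (because $\Nnat_2$ is right Quillen against Lack's model structure, so $\Nnat_2(\cC)$ is always fibrant), the paper's comparison of fibrant objects is immediate. Your route instead establishes that $\Nnat_2$ \emph{reflects} biequivalences, which forces you to invoke \cref{counitweakequivalence}; this is logically sound (there is no circularity, since \cref{counitweakequivalence} is proven directly from \cref{localequivalence} without using \cref{Lackistransferred}), but it draws on the rather heavy explicit analysis of $\honat_2(\Nnat_2(\cC))$ carried out in \cref{prooflemma}. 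The paper's approach thus keeps the present proposition entirely independent of \cref{counitweakequivalence}, which is conceptually cleaner and makes \cref{corcofibrant} a genuine corollary rather than an input. Your comparison of cofibrations, including the retract arguments exhibiting $\Sigma([0]\amalg[0])\hookrightarrow\Sigma[1]$ and $\Sigma[\rightrightarrows]\to\Sigma[1]$ as retracts of $\honat_2(\partial\Delta[2]\hookrightarrow\Delta[2])$ and $\honat_2(\partial\Delta[3]\hookrightarrow\Delta[3])$ respectively, matches the paper's exactly.
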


\begin{proof}
We observe that in both model structures all objects are fibrant; and the generating cofibrations of the two model structures generate the same set of cofibrations. Indeed, we already know from \cref{existencetransferred}
that the (generating) cofibrations of the transferred model structure are Lack cofibrations and we now show that Lack cofibrations are transferred cofibrations.
\begin{itemize}[leftmargin=*]
    \item The Lack generating cofibration $[-1]\to[0]$ is a generating transferred cofibration, obtained as
    categorification of the canonical inclusion $\partial\Delta[0]\hookrightarrow\Delta[0]$.
    \item The Lack generating cofibration $[0]\amalg[0]\to[1]$ is a generating transferred cofibration, obtained as the categorification of the canonical inclusion $\partial\Delta[1]\hookrightarrow\Delta[1]$.
    \item The Lack generating cofibration $\Sigma([0]\amalg[0])\to\Sigma[1]$ is a retract of the generating transferred cofibration obtained as the categorification of $\partial\Delta[2]\hookrightarrow\Delta[2]$.
 We now show how to express $\Sigma([0]\amalg[0])$ as a retract of the categorification of $\partial\Delta[2]$. First, we take the inclusion
    $\Sigma([0]\amalg[0])\to \honat_2(\partial\Delta[2])$,
    that sends the two non-identity morphisms $f$ and $g$ of $\Sigma([0]\amalg[0])$ to the two $1$-morphisms $f_{02}$ and $f_{12}f_{01}$ in $\honat_2(\partial\Delta[2])$
    Then, we take the map
    determined by sending the $1$-morphisms $f_{01}$, $f_{02}$ and $f_{12}$ of $\honat_2(\partial\Delta[2])$ to the $1$-morphisms $\id_{x}$, $f$ and $g$ of $\Sigma([0]\amalg[0])$, respectively.
    To express $\Sigma[1]$ as a retract of the categorification of $\Delta[2]$, we observe that the map $\Sigma([0]\amalg[0])\to \honat_2(\partial\Delta[2])$ extends uniquely to a map $\Sigma[1]\to\honat_2(\Delta[2])$, and that the map $\honat_2(\partial\Delta[2])\to\Sigma([0]\amalg[0])$ extends uniquely to a map $\honat_2(\Delta[2])\to\Sigma[1]$.
    It is easy to check that the two new maps compose to an identity of $\Sigma[1]$, as desired.
     \item The Lack generating cofibration $\Sigma[\rightrightarrows]\to\Sigma[1]$ is a retract of the generating transferred cofibration obtained as the categorification of $\partial\Delta[3]\hookrightarrow\Delta[3]$, as follows. We now show how to express $\Sigma[\rightrightarrows]$ as a retract of the categorification of $\partial\Delta[3]$. We first take the inclusion
    $\Sigma[\rightrightarrows]\to \honat_2(\partial\Delta[3])$,
    that sends the two non-identity $2$-morphisms $\alpha$ and $\beta$ of $\Sigma[\rightrightarrows]$ to the $2$-morphisms $(f_{23}*\alpha_{012})\alpha_{023}$ and $(\alpha_{123}*f_{01})\alpha_{013}$ of $\honat_2(\partial\Delta[3])$, where $f_{ij}$ is the edge of $\Delta[3]$ with vertices $i$ and $j$ and $\alpha_{ijk}$ is the $2$-simplex of $\Delta[3]$ with vertices $i$, $j$ and $k$.
 Then, we take the map
    $\honat_2(\partial\Delta[3])\to\Sigma[\rightrightarrows]$
   determined by sending 
   the four $2$-morphisms represented by the $2$-simplices $\alpha_{012}$, $\alpha_{013}$, $\alpha_{023}$ and $\alpha_{123}$ of $\partial\Delta[3]$ to the $2$-morphisms $\id_{\id_x}$, $\beta$, $\alpha$, and $\id_g$, respectively.
    To express $\Sigma[1]$ as a retract of the categorification of $\Delta[3]$, we observe that the map $\Sigma[\rightrightarrows]\to\honat_2(\partial\Delta[3])$ induces a map $\Sigma[1]\to\honat_2(\Delta[3])$, and that the map $\honat_2(\partial\Delta[3])\to\Sigma[\rightrightarrows]$ extends uniquely to a map $\honat_2(\Delta[3])\to\Sigma[1]$.
    It is easy to check that the two new maps compose to an identity of $\Sigma[\rightrightarrows]$, as desired. 
\end{itemize}
We conclude recalling from \cite[Prop.E.1.10]{JoyalVolumeII} that a model structure is determined by cofibrations and fibrant objects.
\end{proof}

\section{Proof of \cref{localequivalence}}

\label{prooflemma}

We now prove that the counit of the natural nerve-categorification adjunction is a homwise equivalence of categories. 
The proof requires a careful analysis of $\honat_2(\Nnat_2(\cC))$ and of the effect of the counit at the level of hom-categories.

Roughly speaking, the way one builds the $2$-category $\honat_2(\Nnat_2(\cC))$ is by using the data already present in $\cC$, and by adding formal inverses to $2$-isomorphisms, and by adding formal inverse equivalences to $1$-equivalences of $\cC$. Then, formal inverses of $2$-isomorphisms get identified with the inverses already present in $\cC$, and all formal inverse equivalences are turn out to be all isomorphic to each other in a canonical way.
We now investigate more precisely the $2$-categorical structure of $\honat_2(\Nnat_2(\cC))$.

\begin{enumerate}[leftmargin=*]
    \item[(0)] Any object of $\honat_2(\Nnat_2(\cC))$ is uniquely described as an object $x$ of $\cC$.
    \item The underlying $1$-category of  $\honat_2(\Nnat_2(\cC))$ is generated by the following types of morphisms:
    \begin{itemize}[leftmargin=*]
        \item a morphism $[f]\colon x\to y$, for any such morphism $f$ in $\cC$; and
        \item for any adjoint equivalence $(f,g,\eta,\epsilon)$ in $\cC$ with $f\colon x\to y$, a morphism $\widetilde{f}_{(f,g,\eta,\epsilon)}\colon x\to y$ and a morphism $\widetilde{g}_{(f,g,\eta,\epsilon)}\colon y\to x$.
    \end{itemize} 
    These generators are subject to the relations that $[f]$ equals $\widetilde{f}_{(f,g,\eta,\epsilon)}$ and that both $\widetilde{g}_{(\id_x,\id_x,\id_{\id_x},\id_{\id_x})}$ and $[\id_x]$ equal $\id_x$.
    In particular, any $1$-morphism of $\honat_2(\Nnat_2(\cC))$ is uniquely described (modulo identities) by a word $a_1a_2\dots a_n$, with $a_i=[f]$ being represented by a morphism of $\cC$ or $a_i=\widetilde{g}_{(f,g,\eta,\epsilon)}$.
    \item The $2$-categorical structure of $\honat_2(\Nnat_2(\cC))$ is generated by the following types of $2$-morphisms:
    \begin{itemize}[leftmargin=*]
        \item for any $2$-morphism $\varphi\colon c \Rightarrow d$ in $\cC$ and any decomposition $d=d_1d_2$ of $1$-morphisms in $\cC$, the $2$-morphism
        $$\varphi_{d_1,c, d_2}\colon [c]\Rightarrow [d_1][d_2]$$ and, if $\varphi$ is $2$-isomorphism of $\cC$, its inverse
        $$\varphi^{-1}_{d_1,c,d_2}\colon [d_1][d_2]\Rightarrow [c];$$
        \item for any $1$-equivalence $f\colon x\to y$ in $\cC$, the unit and counit $2$-morphisms
        $$\widetilde{\eta}_{(f,g,\eta,\epsilon)}\colon \id_x \Rightarrow \widetilde{g}_{(f,g,\eta,\epsilon)}[f]\quad\text{ and }\quad\widetilde{\epsilon}_{(f,g,\eta,\epsilon)} \colon [f]\widetilde{g}_{(f,g,\eta,\epsilon)} \Rightarrow \id_y$$
        and their inverses
        $$\widetilde{\eta}_{(f,g,\eta,\epsilon)}^{-1}\colon \widetilde{g}_{(f,g,\eta,\epsilon)}[f]\Rightarrow \id_x \quad\text{ and }\quad\widetilde{\epsilon}_{(f,g,\eta,\epsilon)}^{-1} \colon  \id_y\Rightarrow [f]\widetilde{g}_{(f,g,\eta,\epsilon)}.$$
    \end{itemize}
    In particular, any $2$-morphism of $\honat_2(\Nnat_2(\cC))$ is described by a word composed by these generators, subject to the following relations: the relations encoded in the $3$-simplices of $\Nnat_2(\cC)$, the triangle identities for $\widetilde{\eta}_{(f,g,\eta,\epsilon)}$ and $\widetilde{\epsilon}_{(f,g,\eta,\epsilon)}$, the relations witnessing inverses for $\varphi_{d_1,c,d_2}$, $\widetilde{\eta}_{(f,g,\eta,\epsilon)}$ and $\widetilde{\epsilon}_{(f,g,\eta,\epsilon)}$, the identification of $(\id_d)_{\id_x,d, d}$ and $(\id_d)_{d, d,\id_x}$ of with $\id_{[d]}$, the identification of $\widetilde{\eta}_{(\id_x,\id_x,\id_{\id_x},\id_{\id_x})}$ and $\widetilde{\epsilon}_{(\id_x,\id_x,\id_{\id_x},\id_{\id_x})}$ with $\id_{[d]}$.
\end{enumerate}
According to this description, the counit
$$\epsilon^{\natural}_{\cC}\colon\honat_2(\Nnat_2(\cC))\to\cC$$
sends
\begin{itemize}[leftmargin=*]
    \item the objects and the generators of type $[f]$ and $\varphi_{d_1,c,d_2}$ to the corresponding to objects or $1$- or $2$-morphisms of $\cC$, namely $f$ or $\varphi$.
    \item the generators of type $\widetilde{g}_{(f,g,\eta,\epsilon)}$, $\widetilde{\eta}_{(f,g,\eta,\epsilon)}$ and $\widetilde{\epsilon}_{(f,g,\eta,\epsilon)}$ that complete $1$-equivalences to an adjoint equivalence to $g$, $\eta$ and $\epsilon$, respectively.
     \item the generators of type, $\widetilde{\eta}_{(f,g,\eta,\epsilon)}^{-1}$, $\widetilde{\epsilon}_{(f,g,\eta,\epsilon)}^{-1}$, $\varphi_{c,d_1,d_2}^{-1}$ to the inverses in $\cC$ of $\widetilde{\eta}_{(f,g,\eta,\epsilon)}$ and $\widetilde{\epsilon}_{(f,g,\eta,\epsilon)}$ and $\varphi_{c,d_1,d_2}$, respectively.
\end{itemize}

We can now prove the lemma.

 \begin{proof}[Proof of \cref{localequivalence}]
 
 We construct an inverse equivalence to the map induced by the counit on hom-categories. Consider the functor
 $$F_{x,y}\colon\Map_{\cC}(x,y)\to\Map_{\honat_2(\Nnat_2(\cC))}(x,y)$$
 that sends any $1$-morphism $f\colon x\to y$ of $\cC$ to the $1$-morphism of $\honat_2(\Nnat_2(\cC))$ represented by $f$, and any $2$-morphism $\varphi\colon a\Rightarrow b$ of $\cC$ to the $2$-morphism of $\honat_2(\Nnat_2(\cC))$ represented by $\varphi_{b,a,\id_x}$
 A careful analysis of the relations between $2$-morphisms of $\honat_2(\Nnat_2(\cC))$ shows that it is a functor, and by definition
$$\epsilon^{\natural}_{\cC}(x,y)\circ F_{x,y}=\id_{\Map_{\cC}(x,y)}$$
 We now construct a natural isomorphism
 $$\Psi\colon \id_{\Map_{\honat_2(\Nnat_2(\cC))}(x,y)}\Rightarrow F_{x,y}\circ \epsilon^{\natural}_{\cC}(x,y),$$
 by first constructing each component of $\Psi$ and then proving that it is natural.
 
For $r\ge 1$ we construct inductively a family of $2$-isomorphisms in $\honat_2(\Nnat_2(\cC))$ of the form
   \[
   \Psi_{a_1\ldots a_r}\colon a_1\ldots a_r \Rightarrow F_{x,y}(\epsilon^{\natural}_{\cC}(x,y)(a_1\ldots a_r)),
   \]
   where $a_i=[f]$ or $a_i=\widetilde{g}_{(f,g,\eta,\epsilon)}$.
   \begin{itemize}[leftmargin=*]
       \item For $r=1$, if $a_1=[f]$ for a $1$-morphism $f\colon x\to y$ of $\cC$ we set
       $$\Psi_{[f]}:=\id_{f}\colon [f]\Rightarrow [f],$$
       and if $a_1=\widetilde{g}_{(f,g,\eta,\epsilon)}$ is a formal adjoint for a $1$-equivalence $f\colon x\to y$ of $\cC$,
       we set $\Psi_{\widetilde{g}_{(f,g,\eta,\epsilon)}}$ to be the composite
           \[
    \begin{tikzcd}[column sep=3cm]
    \Psi_{\widetilde{g}_{(f,g,\eta,\epsilon)}}\colon {\widetilde{g}_{(f,g,\eta,\epsilon)}} \arrow[r, Rightarrow, "{\eta_{g,\id_x,f}*\widetilde{g}_{(f,g,\eta,\epsilon)}}"] & {[g][f]\widetilde{g}_{(f,g,\eta,\epsilon)}} \arrow[r, Rightarrow, "{[g]*\widetilde{\epsilon}_{(f,g,\eta,\epsilon)}}"] &{[g]}.
    \end{tikzcd}
    \]
       \item    For $r>1$, assume $\Psi_{a_1\ldots a_{r-1}}$ to be already defined. Recall that for any pair of composable $1$-morphisms $d_1$ and  $d_2$ in $\cC$
       there is a $2$-isomorphism in $\honat_2(\Nnat_2(\cC))$ corresponding to $\id_{d_1d_2}\colon d_1d_2\Rightarrow d_1d_2$, which we denote
       $$ I_{d_1,d_2}:=(\id_{d_1d_2})_{d_1, (d_1d_2), d_2} \colon [d_1d_2] \Rightarrow [d_1][d_2].$$
       We also write as a shorthand $$b:=\epsilon^{\natural}_{\cC}(x,y)(a_1\ldots a_{r-1})\text{ and }c:=\epsilon^{\natural}_{\cC}(x,y)(a_r).$$
       Set $\Psi_{a_1\ldots a_r}$ to be the composite
     \[
   \begin{tikzcd}
    a_1\ldots a_{r-1}a_r \arrow[r, Rightarrow, "{\Psi_{a_1\ldots a_{r-1}}*a_r}"] &[+0.8cm] {[b]a_r} \arrow[r, Rightarrow, "{[b]*\Psi_{a_r}}"]& {[b][c]} \arrow[r, Rightarrow, "{ I_{b,c}^{-1}}" ]&{F_{x,y}(\epsilon^{\natural}_{\cC}(x,y)(a_1\ldots a_r))}. 
   \end{tikzcd}
   \]
   \end{itemize}
  We observe that the associativity of composition for $1$-morphisms in $\cC$ yields $3$-simplices in $\Nnat_2(\cC)$ which in turn impose the following relations on $ I$ in $\honat_2(\Nnat_2(\cC))$ inductively for $i=1,\dots,r$:
   \begin{equation}
\label{psiandbeta}
\Psi_{a_1\ldots a_r}= I^{-1}_{\epsilon^{\natural}_{\cC}(x,y)(a_1\ldots a_i),\epsilon^{\natural}_{\cC}(x,y)(a_{i+1}\ldots a_r)}\circ (\Psi_{a_1\ldots a_i}*\Psi_{a_{i+1}\ldots a_r}).
   \end{equation}

   In order to show that $\Psi$ is natural, thanks to the relation (\ref{psiandbeta}) it suffices to check that is natural with respect to the generating $2$-morphisms of $\honat_2(\Nnat_2(\cC))$ of the form $\varphi_{d_1,c,d_2}$, $\widetilde{\eta}_{(f,g,\eta,\epsilon)}$, $\widetilde{\epsilon}_{(f,g,\eta,\epsilon)}$, and their inverses.
   \begin{itemize}[leftmargin=*]
       \item Given any $2$-morphism $\varphi \colon c\Rightarrow d_1d_2$ of $\cC$, the naturality square of $\Psi$ on $\varphi_{c,d_1d_2,\id_x}$,
   \[
   \begin{tikzcd}
    {[c]} \arrow[d, Rightarrow, "{\varphi_{d_1,c,d_2}}"swap] \arrow[r, Rightarrow, "{\Psi_{c}}"] & {[c]} \arrow[d, Rightarrow, "{\varphi_{d_1d_2,c,\id_x}}"]\\
    {[d_1][d_2]} \arrow[r, Rightarrow, "{\Psi_{d_1d_2}}"swap] & {[d_1d_2]},
   \end{tikzcd}
   \]
   commutes thanks to the relation on $2$-morphisms of $\honat_2(\Nnat_2(\cC))$ witnessed by following the $3$-simplex of $\Nnat_2(\cC)$
      \[
   \simpfver{\id_x}{d_2}{d_1}{c}{d_2}{d_1d_2}{\varphi}{\id_{d_2}}{\varphi}\simpfvercontinued{\id_{d_1d_2}}{x}{x}{y}{z}.
   \]
   \item Given any $1$-equivalence $f\colon x\to y$ of $\cC$, the naturality square of $\Psi$ on $\widetilde{\eta}_{(f,g,\eta,\epsilon)}$ is the diagram
         \[
   \begin{tikzcd}
    \id_x \arrow[rd, Rightarrow, "{\eta_{g, \id_x, f}}"] \arrow[ddd, Rightarrow, "{\widetilde{\eta}_{(f,g,\eta,\epsilon)}}"swap] \arrow[rrr, Rightarrow, "{\Psi_{\id_x}}"] &&[+0.5cm]& {\id_x} \arrow[ddd, Rightarrow, "{\eta_{gf,\id_x,\id_x}}"]\\
    &{[g][f]}\arrow[d, Rightarrow, "{[g][f]*\widetilde{\eta}_{(f, g, \eta, \epsilon)}}"] \arrow[rd, equals, bend left]&&\\
     & {[g][f]\widetilde{g}_{(f,g,\eta,\epsilon)}[f]} \arrow[r, Rightarrow, "{[g]\widetilde{\epsilon}_{(f, g, \eta, \epsilon)}[f]}"swap]& {[g][f]}\arrow[rd, Rightarrow, "I_{g,f}^{-1}"]&\\
    {\widetilde{g}_{(f,g,\eta,\epsilon)}[f]}\arrow[ru, Rightarrow, "{\eta_{g,\id_x, f}*{\widetilde{g}_{(f,g,\eta,\epsilon)}[f]}}"swap]  \arrow[rrr, Rightarrow, "{\Psi_{\widetilde{g}_{(f,g,\eta,\epsilon)}[f]}}"swap] &&& {[gf]},
   \end{tikzcd}
   \]
   and commutes thanks to the triangle identities for $\widetilde{\eta}_{(f,g,\eta,\epsilon)}$ and $\widetilde{\epsilon}_{(f,g,\eta,\epsilon)}$, and thanks to the definition of $\Psi_{\widetilde{g}_{(f,g,\eta,\epsilon)}[f]}$.

   \item  Given any $1$-equivalence $f\colon x\to y$ of $\cC$, the naturality square of $\Psi$ on $\widetilde{\epsilon}_{(f,g,\eta,\epsilon)}$ is the diagram
        \[
  \begin{tikzcd}
  {[f]\widetilde{g}_{(f,g,\eta,\epsilon)}} 
  \arrow[rd, Rightarrow, "{[f]*\eta_{ g, \id_x, f}*\widetilde{g}_{(f,g,\eta,\epsilon)}}"]
  \arrow[rdd, Rightarrow, "{\scriptsize [f]*\eta_{ gf, \id_x, \id_x}*\widetilde{g}_{(f,g,\eta,\epsilon)}}"{sloped, near start}]
  \arrow[rddd, Rightarrow, "{(f*\eta)_{fgf, f, \id_x}*\widetilde{g}_{(f,g,\eta,\epsilon)}}"{swap, near end, sloped}]
  \arrow[rdddd, equals, bend right]
  \arrow[ddddd, Rightarrow, "{\widetilde{\epsilon}_{(f,g,\eta,\epsilon)}}"{near end}]
  \arrow[rrr, Rightarrow, "{\Psi_{[f]\widetilde{g}_{(f,g,\eta,\epsilon)}}}"] 
  &[+1.2cm]&[+0.4cm]& {[fg]} \arrow[ddddd, Rightarrow, "{\epsilon_{\id_y,fg,\id_y}}"swap]\\
  &{[f][g][f]\widetilde{g}_{(f,g,\eta,\epsilon)}}
  \arrow[r, Rightarrow, "{[f][g]*\widetilde{\epsilon}_{(f, g, \eta, \epsilon)}}"]
  \arrow[d, Rightarrow, "{[f]*I_{g,f}^{-1}*\widetilde{g}_{(f,g,\eta,\epsilon)}}"]
  &{[f][g]}\arrow[ru, Rightarrow, "{I_{f,g}^{-1}}"]&\\
  &{[f][gf]\widetilde{g}_{(f,g,\eta,\epsilon)}}\arrow[d, Rightarrow, "{I_{f,gf}*\widetilde{g}_{(f,g,\eta,\epsilon)}}"]&&\\
  &{[fgf]\widetilde{g}_{(f,g,\eta,\epsilon)}}
  \arrow[d, Rightarrow, "{(\epsilon*f)_{f, fgf,\id_x}*\widetilde{g}_{(f,g,\eta,\epsilon)}}"]
  &{[fg][f]\widetilde{g}_{(f,g,\eta,\epsilon)}} \arrow[ld, Rightarrow, bend left, "{\epsilon_{\id_y, fg, \id_y}*[f]\widetilde{g}_{(f,g,\eta,\epsilon)}}"]\arrow[l, Rightarrow, "{I^{-1}_{fg,f}*\widetilde{g}_{(f,g,\eta,\epsilon)}}"swap]\arrow[uuur, Rightarrow, "{[fg]*\widetilde{\epsilon}_{(f,g,\eta,\epsilon)}}"{near start}]&\\
    &
    {[f]\widetilde{g}_{(f,g,\eta,\epsilon)}}\arrow[ld, Rightarrow, "{\widetilde{\epsilon}_{(f, g, \eta, \epsilon)}}"]
    &&\\
    \id_y  \arrow[rrr, Rightarrow, "{\Psi_{\id_y}}"] &&& {[\id_y]} 
  \end{tikzcd}
  \]
   and commutes thanks to the triangle identities for $\eta$ and $\epsilon$ in $\cC$ 
   and the relation (\ref{psiandbeta}).
   \item Finally, the commutativity naturality square of $\Psi$ on the inverses of $\widetilde{\eta}_{(f,g,\eta,\epsilon)}$, $\widetilde{\epsilon}_{(f,g,\eta,\epsilon)}$ and $\varphi_{d_1, c, d_2}$ follows from that of the commutativity squares on $\widetilde{\eta}_{(f,g,\eta,\epsilon)}$, $\widetilde{\epsilon}_{(f,g,\eta,\epsilon)}$ and $\varphi_{d_1,c,d_2}$, respectively.
  \qedhere
   \end{itemize}
 \end{proof}

\bibliographystyle{amsalpha}
\bibliography{ref}
\end{document}